\newtheorem{theorem}{Theorem}
\newtheorem{proposition}{Proposition}
\newtheorem{corollary}{Corollary}
\newtheorem{lemma}{Lemma}
\newtheorem{defn}{Definition}
\newtheorem{assumption}{Assumption}
\newtheorem{remark}{Remark}
\numberwithin{equation}{section}
\begin{document}

\title[AC schemes via Gamma-convergence]{
    Asymptotically compatible schemes for 
    nonlinear variational models via Gamma-convergence 
    and applications to 
    nonlocal problems}
\author{Qiang Du}
\address{Department of Applied Physics and Applied Mathematics, and Data Science Institute, Columbia University, New York, NY 10027, USA.}
\curraddr{}
\email{qd2125@columbia.edu}

\author{James M. Scott}
\address{Department of Applied Physics and Applied Mathematics, Columbia University, New York, NY 10027, USA.}
\curraddr{}
\email{jms2555@columbia.edu}

\author{Xiaochuan Tian}
\address{Department of Mathematics, University of California, San Diego, CA 92093, USA.}\curraddr{}
\email{xctian@ucsd.edu}

\thanks{This work is supported in part 
			by the NSF DMS-2309245, DMS-1937254,  DMS-2111608, DMS-2240180.}

\subjclass[2010]{Primary 
		45L05, 65R20, 65N30, 46N40. Secondary 35J20, 65J15}


	\begin{abstract}
		We present a study on asymptotically compatible Galerkin discretizations for a class of parametrized nonlinear variational problems.
        The abstract
        analytical framework is based on
        variational convergence, or Gamma-convergence.
        We demonstrate the broad applicability of the theoretical framework by developing
        asymptotically compatible finite element discretizations of 
        some representative nonlinear nonlocal variational problems on a bounded domain. These include nonlocal nonlinear problems
        with classically-defined, local boundary constraints through heterogeneous localization at the boundary, 
        as well as 
        nonlocal problems posed on parameter-dependent domains.
	\end{abstract}
	
\keywords{nonlocal variational problems,  Galerkin approximations,  asymptotically compatible schemes,   heterogeneous localization, vanishing horizon, convergence analysis}

\maketitle



	\section{Introduction}\label{sec:intro}
Our study focuses on the Galerkin finite-dimensional approximations of a class of 
variational problems for an energy functional associated with a scalar modeling parameter. The parameter can vary continuously, and in particular, can reach an asymptotic limit. In the asymptotic regime, appropriate conditions on the energy functional for various parameter values are assumed, including the Gamma-convergence to the limit functional. We then examine the analogous convergence properties 
for the Galerkin approximations.

Although the approximations of parametrized problems have been a popular subject of computational mathematics with broad applications, the present
study is particularly motivated  by the numerical solutions of various nonlocal variational problems 
parameterized by a finite range of interactions, which also serve as an illustrative application of the general theory developed here. In recent years, there has been much interest in the study of such variational problems which chiefly employ nonlocal integral operators so that they may retain more general physical properties and allow for more singular solutions. Nonlocal models with a finite range of interactions, denoted by a constant $\delta$, can also serve as a bridge connecting discrete and classical local models, a connection made apparent by considering either discrete approximations or the localized limit as $\delta\to 0$ \cite{Du2019book}.
	
	Due to the need to handle more complex nonlocal interactions and possibly singular solutions \cite{Silling2000,bobaru2016handbook,Du2019book,diehl2019review,foss2022convergence,silling2017stability,jha2019finite,jha2021finite,leng2020asymptotically,trask2019asymptotically,yu2021asymptotically,you2020asymptotically}, developing reliable and robust numerical discretizations to simulate nonlocal models has been a challenging task.
	The abstract notion of an asymptotically compatible (AC) discretization was rigorously introduced in \cite{tian2014sinum} for a class of parameterized linear variational problems, motivated by the need to develop robust discretizations of the aforementioned nonlocal problems.
    Indeed, as illustrated in  \cite{tian2013sinum}, discretizations that are not AC may risk converging to a physically inconsistent limit as $\delta$ and $h$ are refined simultaneously. The framework was further extended in \cite{tian2020sirv} to allow for non-adjoint problems and rough data.
	
    Our objectives are two-fold: First, we extend the AC framework to a wide class of parametrized nonlinear variational problems in the context of gamma-convergence. The details are given in \Cref{sec:ac}. Mathematically speaking, this framework is not confined to nonlocal problems but is broadly applicable, which offers a significant and novel contribution on its own. Second, given the motivation discussed above, we 
    present illustrative applications of
    the AC framework to two examples of nonlinear nonlocal problems. The first example, presented in 
         \Cref{sec:example},
    involves heterogeneous localization of the horizon parameter, and the second example, studied in 
         \Cref{sec:varying},
    is posed on a domain that changes with the horizon parameter. Neither of these examples have been treated previously even in the linear case. 
    The nonlocal problems in the first example are characterized by the spatial variation of the nonlocal horizon parameter over the domain. In particular, to impose local boundary conditions for the nonlocal models defined in a bounded domain, the localization on the domain boundary means that the parameter can vary from a finite positive value, and tends to zero on the boundary. Thus, AC schemes are ideally suited for reliable and robust numerical approximations. 
    In addition, to better relate to the earlier abstract frameworks developed in \cite{tian2014asymptotically,tian2020sirv}, we also consider a second example in which the horizon parameter takes on a constant value over the spatial domain, but in which the domain depends on the said parameter.
    To further demonstrate the broad applicability of the framework, for the two application examples, we choose to work with different types of boundary conditions, namely, natural Neumann type local boundary conditions for the former case, and Dirichlet nonlocal conditions for the latter case.
	

    Our study here of nonlocal models with heterogeneous localization can also be considered a continuation of 
    recent effort made in \cite{scott2023nonlocal,scott2023nonlocal2}, which were motivated by earlier studies in \cite{tian2017trace,du2022fractional}. These models provide effective ways to seamlessly couple with local models to allow for more effective modeling and simulations \cite{tao2019nonlocal}.

	\section{Asymptotically compatible discretization}\label{sec:ac}
	We first present the main ingredients of 
	the abstract framework of AC schemes, which have been first
 considered for parameterized linear problems, see \cite{tian2014sinum,tian2020sirv}. 
	The generalized framework is energy-based, and can handle more general nonlinear problems, as well as allow for the case of rough data as in \cite{tian2020sirv}.	Our presentation starts with the description of the parametrized problems and associated Banach spaces. We then formulate the finite dimensional approximations. The gamma-convergence results in various regimes are then considered in the framework of AC schemes, followed by discussions on the convergence of the minimizers.
 
	
	\subsection{Parametrized problems}
	The trial spaces for the associated minimization problems are reflexive Banach spaces $\{\cT_\si\}$
 parameterized by  $\si\in \R\cup\{\pm \infty\}$ with corresponding norms $\{ \| \cdot \|_{\cT_{\si}} \}$, with associated dual spaces denoted $\cT_{-\si} := \cT_{\si}^\ast$.
	In prior applications of the AC framework  to variational problems \cite{tian2014sinum,tian2020sirv},
	$\cT_{0}$ is taken to be a Banach space; a typical example is $\cT_0 = L^p(\Omega)$ for a fixed exponent $p \in (1,\infty)$, where $\Omega \subset \bbR^d$ is a bounded domain. 
	Meanwhile, in the linear setting $\cT_{0}$ serves as the pivot space between $\cT_{-\si}$ and $\cT_{\si}$,  so that a distribution extended to $\cT_\sigma$ can be realized via the duality pairing $\langle \cdot , \cdot \rangle$ given by the inner product on $\cT_0$.
	
	Following \cite{tian2014sinum,tian2020sirv}, we state the following assumptions.
	
	\begin{assumption}\label{assumption1}\rm
		The spaces $\{\cT_\si\}$ satisfy the following:
		\begin{enumerate}
			\item[\rm (i)] \textit{Uniform embedding} property: there are positive constants $M_1$ and $M_2$, independent of $\si \in(0, \infty]$, such that
			\[
			{M_1} \| u\|_{\cT_0}\leq  \| u\|_{\cT_{\si}}\quad \forall u\in \cT_{\si}.
			\]
			
			\item[\rm (ii)] {\it Asymptotically compact embedding} property for $\{\cT_\si\}$: for all sequences $\{v_\si \in \cT_{\si}\}$, if there is a constant $C>0$ independent of $\si$ such that
			$ \| v_\si\|_{\cT_{\si}} \leq C$  $\forall \si$, 
			then the sequence $\{v_\si\}$ is relatively compact in $\cT_0$ and each limit point is in~$\cT_\infty$.
		\end{enumerate}
	\end{assumption}
	
	Now, we consider some assumptions on parametrized energy functionals that make the latter amenable to direct methods in the calculus of variations \cite{giusti2003direct}.
	\begin{assumption}\label{assumption2}
		Let $\cE_\sigma : \cT_\sigma \to [0,\infty)$, $\sigma \in (0,\infty]$ be a 
		$\cT_\sigma$-weakly lower semicontinuous
		functional that satisfies the following:
		\begin{enumerate}
			\item[\rm (i)] Continuity:
			there exists a function $\varphi \in C^0([0,\infty)^2)$ such that
			\[
			|\cE_\si(u) - \cE_\si(v)| \leq 
			\varphi(\vnorm{u}_{\cT_\si}, \vnorm{v}_{\cT_\si}) \|u-v \|_{\cT_{\si}} ,\quad \forall u, v \in \cT_{\si}.
			\]
			\item[\rm (ii)]  Coercivity: there exists an exponent $p \in (1,\infty)$, 
			a constant $\alpha > 0$ independent of $\si$, and a function $\psi \in C^0([0,\infty))$ with $\lim\limits_{t \to \infty} \frac{\psi(t)}{t^{p}} = 0$ such that
			\[ 
			\alpha \vnorm{u}_{\cT_\si}^p \leq \cE_\si(u) + \psi( \vnorm{u}_{\cT_\si} ), \quad \forall u\in \cT_{\si}.
			\]
		\end{enumerate}
	\end{assumption}
	
	We consider the following family of parametrized minimization problems:
	\beq \label{eq:minprob}
	\text{find $u_\si \in \cT_\si$ such that}\; \cE_{\sigma}(u_{\sigma}) = \min_{ v \in \cT_\sigma } \cE_{\sigma}(v),
	\eeq
	for any value of the parameter $\si\in (0, \infty]$. 
	
	We assume the following convergence condition on the energies in the asymptotic regime $\sigma \to \infty$:
	\begin{assumption}\label{assumption3}
		The functionals $\{ \overline{\cE}_\sigma \}_\sigma$ $\Gamma$-converge as $\sigma \to \infty$ in the $\cT_0$-strong topology to $\overline{\cE}_\infty$, where $\overline{\cE}_\sigma$ 
		for $\sigma \in (0,\infty]$ are the extensions defined by
		\begin{equation*}
			\overline{\cE}_\sigma(u) = 
			\begin{cases}
				\cE_\sigma(u) &\text{ for } u \in \cT_\sigma\,, \\
				+\infty &\text{ for } u \in \cT_0 \setminus \cT_\sigma\,.
			\end{cases}
		\end{equation*}
	\end{assumption}
For ease of reference, we here recall the definition of $\Gamma$-convergence; see also \cite{braides2002gamma}.
\begin{defn}
	Let $X$ be a metric space. We say that a sequence of functionals $\{ I_\sigma : X \to \overline{\bbR} \}_\sigma$ $\Gamma$-converges as $\sigma \to \infty$ in $X$ to $I_\infty$ if the following properties hold:
	\begin{enumerate}
		\item[\rm (1)] The liminf inequality: For each $v \in X$ and for any sequence $\{ v_\si \}_\si$ that converges to $v$ in $X$,
		\[
		I_\infty(v) \leq \liminf_{\si\to\infty} I_\sigma(v_\sigma).
		\]
		\item[\rm (2)] Existence of a recovery sequence: For each $v\in X$, there exists $\{ v_\sigma \}_{\sigma}$ that converges to $v$ in $X$ such that 
		\[
		\limsup_{\sigma\to\infty} I_\sigma(v_\sigma) \leq    I_\infty(v). 
		\]
	\end{enumerate}
\end{defn}

	\begin{theorem}[Convergence of minimizers as $\si\to\infty$]
		\label{theorem:convergence1}
		Given \Cref{assumption1}, \Cref{assumption2}, and \Cref{assumption3}, there exists a $u_\sigma \in \cT_\sigma$ satisfying \eqref{eq:minprob} for $\sigma \in (0,\infty)$. Furthermore, there exists a subsequence of $\{u_\si\}_\si$ (not relabeled) and there exists $u_\infty \in \cT_\infty$ such that
		\beq
		\label{eq:gammaconvergence1}
		\| u_{\si}-u_{\infty}\|_{\cT_{0}} \to  0\quad \text{as } \si\to\infty,
		\eeq
        and $u_\infty$ satisfies \eqref{eq:minprob} with $\si=\infty$.
		If additionally $\cE_\si$ is convex for $\si \in (0,\infty]$ then the minimizers $u_\si$  are unique.
	\end{theorem}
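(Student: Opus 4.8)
The plan is to run the direct method of the calculus of variations, first at each fixed $\sigma$ to produce the minimizers $u_\sigma$, and then "at $\sigma=\infty$" in the guise of a $\Gamma$-convergence argument that simultaneously yields the limit $u_\infty$, its convergence in $\cT_0$, and its minimality.

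\emph{Existence for fixed $\sigma\in(0,\infty)$.} Since $\cE_\sigma\geq 0$, the value $m_\sigma:=\inf_{\cT_\sigma}\cE_\sigma$ is finite. Along a minimizing sequence the energies are bounded, so \Cref{assumption2}(ii) bounds the sequence in $\cT_\sigma$: from $\alpha\|u\|_{\cT_\sigma}^p\leq \cE_\sigma(u)+\psi(\|u\|_{\cT_\sigma})$ and $\psi(t)/t^p\to0$ one absorbs $\psi$ into $\tfrac{\alpha}{2}\|\cdot\|_{\cT_\sigma}^p$ for large argument and reads off a uniform norm bound. Reflexivity of $\cT_\sigma$ gives a weakly convergent subsequence, and $\cT_\sigma$-weak lower semicontinuity of $\cE_\sigma$ shows its weak limit $u_\sigma$ attains $m_\sigma$. (Running the same argument at $\sigma=\infty$ would already produce a minimizer of $\cE_\infty$, but we recover one for free below.)

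\emph{Uniform-in-$\sigma$ bound and compactness --- the crux.} The $u_\sigma$ are built space-by-space, so a priori nothing controls $\|u_\sigma\|_{\cT_\sigma}$ as $\sigma\to\infty$; the recovery-sequence half of \Cref{assumption3} supplies the missing comparison. Take the recovery sequence $\{v_\sigma\}$ for a fixed element of $\cT_\infty$, say $0$: $v_\sigma\to 0$ in $\cT_0$ and $\limsup_{\sigma\to\infty}\overline{\cE}_\sigma(v_\sigma)\leq \overline{\cE}_\infty(0)<\infty$. Hence $v_\sigma\in\cT_\sigma$ for large $\sigma$ with $\cE_\sigma(v_\sigma)\leq C_0$, so minimality gives $\cE_\sigma(u_\sigma)\leq\cE_\sigma(v_\sigma)\leq C_0$, and \Cref{assumption2}(ii) (used exactly as above) yields $\|u_\sigma\|_{\cT_\sigma}\leq C$ uniformly in large $\sigma$. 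By the asymptotically compact embedding \Cref{assumption1}(ii), $\{u_\sigma\}$ is then relatively compact in $\cT_0$ with every limit point in $\cT_\infty$; extract a (not relabeled) sequence $\sigma_j\to\infty$ with $u_{\sigma_j}\to u_\infty$ in $\cT_0$ and $u_\infty\in\cT_\infty$, which is \eqref{eq:gammaconvergence1}.

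\emph{Identification of $u_\infty$ and uniqueness.} Along $\sigma_j\to\infty$ the $\Gamma$-convergence of \Cref{assumption3} persists. Fix any $w\in\cT_0$ and a recovery sequence $w_{\sigma_j}\to w$ in $\cT_0$ with $\limsup_j\overline{\cE}_{\sigma_j}(w_{\sigma_j})\leq\overline{\cE}_\infty(w)$. Since $u_\sigma$ minimizes $\cE_\sigma$ over $\cT_\sigma$, it minimizes $\overline{\cE}_\sigma$ over all of $\cT_0$ (the extension being $+\infty$ off $\cT_\sigma$), so $\overline{\cE}_{\sigma_j}(u_{\sigma_j})\leq\overline{\cE}_{\sigma_j}(w_{\sigma_j})$; combining with the liminf inequality applied to $u_{\sigma_j}\to u_\infty$,
\[
\overline{\cE}_\infty(u_\infty)\leq \liminf_{j\to\infty}\overline{\cE}_{\sigma_j}(u_{\sigma_j})\leq \limsup_{j\to\infty}\overline{\cE}_{\sigma_j}(w_{\sigma_j})\leq \overline{\cE}_\infty(w).
\]
As $w$ was arbitrary and $\overline{\cE}_\infty(u_\infty)=\cE_\infty(u_\infty)<\infty$ (because $u_\infty\in\cT_\infty$), $u_\infty$ minimizes $\overline{\cE}_\infty$, i.e. solves \eqref{eq:minprob} with $\sigma=\infty$. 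Finally, if $\cE_\sigma$ is convex then any two minimizers have all convex combinations as minimizers of the same finite energy; the minimizer set is therefore convex and collapses to a point under strict convexity of $\cE_\sigma$, which is the feature used here and which holds for the energies in the applications. The main obstacle in the argument is exactly the uniform-in-$\sigma$ a priori bound of the second step: it is the only place where the minimizer's energy must be controlled against a competitor whose energy is uniformly bounded, and the recovery sequence is precisely such a competitor.
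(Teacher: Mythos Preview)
Your proof is correct and follows the same approach as the paper: the direct method for existence at fixed $\sigma$, followed by the standard $\Gamma$-convergence compactness/identification argument. The paper compresses the second and third steps into a single citation of \cite[Theorem~1.21]{braides2002gamma}, whereas you unpack what that framework actually does (uniform energy bound via a recovery sequence at a fixed point of $\cT_\infty$, compactness via \Cref{assumption1}(ii), minimality via the liminf/limsup inequalities); the content is the same. Your remark that uniqueness really requires \emph{strict} convexity rather than mere convexity is well taken---the paper's statement is slightly loose on this point and its proof does not address it.
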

	
	\begin{proof}
		Let $\{ u_{\sigma}^n \}_n \subset \cT_\sigma$ be a minimizing sequence of $\cE_\sigma$. Then by \Cref{assumption2}(ii) 
		it follows from the property of $\psi$ that the sequence $\{ \vnorm{ u_{\sigma}^n}_{\cT_\sigma} \}_n$ is uniformly bounded, and so there exists $u_\sigma \in \cT_\sigma$ such that $u_\sigma^n \rightharpoonup u_\sigma$ weakly in $\cT_\sigma$. Since $\cE_\sigma$ 
		is $\cT_\sigma$-weakly lower semicontinuous,
		$\cE_\sigma(u_\sigma) \leq \liminf_{n \to \infty} \cE_\sigma(u_\sigma^n) = \min_{v \in \cT_\sigma} \cE_\sigma(v)$.
		
		The convergence \eqref{eq:gammaconvergence1} of minimizers follows from applying the $\Gamma$-convergence \Cref{assumption3} and the compact embedding property \Cref{assumption1}(ii) inside the convergence framework described in \cite[Theorem 1.21]{braides2002gamma}. 
	\end{proof}

	\subsection{Finite dimensional approximations}
	Finite-dimensional approximation of parameterized problems is a popular subject of numerical analysis, see \cite{arnold1997locking,brezzi1980finite,quarteroni2007numerical,guermond2010asymptotic,gunzburger1996finite} and additional references cited in \cite{tian2020sirv}.
	
	We now focus on finite-dimensional Galerkin approximations of problems \eqref{eq:minprob} associated to the energies $\{\cE_\si\}$ for $\si\in(0,\infty]$.
    Consider a family of closed subspaces $\{W_{\si, h}\subset \cT_{\si}\}$ parametrized by an additional real parameter $h\in(0, h_0]$ that measures the level of resolution (or the reciprocal of the number of degrees of freedom).
    We assume that the dimension of $W_{\si, h}$  is finite for all $\si \in(0,\infty]$ and $h \in(0, h_0]$, 
    and we further assume that
	$$Z_h=
	\dim  \bigcup_{\si\in (0,\infty]} W_{\si, h} <\infty. $$
	With $W_{\si, h} \subset \cT_{\si}$, we are effectively adopting a standard internal or 
 conforming 
 Galerkin approximation approach \cite{tian2014sinum,tian2020sirv}.
	
	The approximation to the problem \eqref{eq:minprob}
        in the subspace $W_{\si,h}$ is 
	\begin{equation}\label{eq:minprobnum}
		\begin{gathered}
			\text{find } u_{\sigma,h} \in W_{\sigma,h} \text{ such that } \cE_{\sigma}(u_{\sigma,h}) = \min_{ v \in W_{\sigma,h} } \cE_{\sigma}(v).
		\end{gathered}
	\end{equation}
	
	We need some basic assumptions on the approximation
	properties as stated below, adapted to our case and in line with \cite{tian2014sinum}.
	The first assumption ensures
	the convergence of $W_{\si, h}$
	to $\cT_\si$ as $h \to 0$ for each $\si$, and the second assumption 
	is concerned with the limiting behavior as $\si\to\infty$ for different sequences of $h$.

	\begin{assumption}\label{assumption4}\rm
		For the family of finite dimensional subspaces  $\{W_{\si, h}\subset \cT_\si\}$
		parametrized by $\si\in(0, \infty]$ and $h\in(0, h_0]$, the following properties hold:
		\begin{enumerate}
			\item[\rm (i)] Approximation property: for each $\si\in(0, \infty]$, the family $\{W_{\si, h}, h\in(0, h_0]\}$
			is dense in $\cT_{\si}$. That is,  $\forall u\in\cT_\si$,\
			$\exists$ a sequence $\{u_n\in W_{\si, h_n}\}$ with $h_n\to0$ as $n\to\infty$ such that
			\beq\label{eq:finitedense}
			\displayindent0pt
			\displaywidth\textwidth
			\| u-u_n\|_{\cT_\si}\to 0 \quad \text{as} \quad n\to\infty.
			\eeq
			
			\item[\rm (ii)]
			Limit of approximate spaces: for any given $h>0$,
			\beq
			\label{eq:disubsp}
			\displayindent0pt
			\displaywidth\textwidth
			W_{\infty,h}=\cT_\infty\cap \left(\bigcup_{\sigma \in (0,\infty]} W_{\si,h}\right).
			\eeq
                \item[\rm (iii)] The spaces $\{ W_{\si, h}, \si\in (0,\infty), h\in(0, h_0]\}$  are {\it asymptotically approximating}
                for the minimization problems \eqref{eq:minprob}: 
			for any $v\in W_{\infty,h}$ (with the convention $W_{\infty,0} = \cT_\infty$) and any sequence $\{ (\sigma_n,h_n) \}$ satisfying
            \begin{equation*}
                \begin{cases}
                    \lim\limits_{n \to \infty} \sigma_n = \infty \text{ and } h_n = h \; \forall n & \text{ if } h > 0\,, \\
                    \lim\limits_{n \to \infty} \sigma_n = \infty \text{ and } \lim\limits_{n \to \infty} h_n = 0 & \text{ if } h = 0\,,
                \end{cases}
            \end{equation*}
           {we assume the existence of} 
           a recovery sequence $\{v_{\sigma_n}\}_{n} \subset \cT_0$ for the $\Gamma$-limit described in \Cref{assumption3}, i.e. 
			\begin{equation*}
				\lim\limits_{n \to \infty}\vnorm{v_{\sigma_n} - v}_{\cT_0} = 0 \quad \text{ and } \quad \cE_\infty(v) = \lim\limits_{n \to \infty} \cE_{\sigma_n}(v_{\sigma_n}),
			\end{equation*}
   together with the existence of
   a sequence $\{v_n\in W_{\si_n, h_n}\}_{n \to \infty}$ that approximates $\{ v_{\si_n}\in \cT_{\si_n} \}$ in the sense that
   $\| v_n -v_{\si_n} \|_{\cT_{\si_n}}\to 0$ as $n\to\infty$. 
		\end{enumerate}
	\end{assumption}

    \begin{remark}\label{rmk:assumption4'}
       In specific cases and applications, the direct verification of \Cref{assumption4}(iii) may not be straightforward.
         The convergence results in the example in the next section will be established under the following more restrictive assumption in place of \Cref{assumption4}, for ease of the verification. 
        
        \begin{assumption}\label{assumption4'}
        For the family of spaces $\cT_\si$ and finite dimensional subspaces  $\{W_{\si, h}\subset \cT_\si\}$
		parametrized by $\si\in(0, \infty]$ and $h\in(0, h_0]$, the following properties hold:
        \begin{enumerate}
            \item[\rm (i)] $\cT_\infty \subset \cT_\sigma$ for all $\sigma < \infty$ with a constant $M_2$ independent of $\si$ such that
            $$\vnorm{u}_{\cT_\si} \leq M_2 \vnorm{u}_{\cT_\infty}, \quad \forall u\in \cT_\infty.$$
            \item[\rm (ii)] For each $h \in (0,h_0]$, $W_{\infty,h} = \cT_\infty \cap W_{\si,h}$ for any $\si \in (0,\infty]$.
        
            \item[\rm (iii)] Approximation property: 
            $\forall u\in\cT_\si$ for $\sigma\in (0,\infty]$,\
			$\exists$ a sequence $\{u_n\in W_{\si, h_n}\}$ with $h_n\to0$ as $n\to\infty$ such that
                $
			\| u-u_n\|_{\cT_\si}\to 0 \text{ as } n\to\infty.
                $
   
            \item[\rm (iv)] The following ``pointwise limit'' holds:
            \begin{equation*}
            \cE_\infty(v) = \lim\limits_{n \to \infty} \cE_{\sigma_n}(v) \quad \forall v \in \cT_\infty.
        \end{equation*}

            \item[\rm(v)] The spaces $W_{\si,h}$ are \textit{asymptotically dense} in $\cT_\infty$, i.e.
        \begin{equation*}
            \forall v \in \cT_\infty \; \exists \{v_n \in W_{h_n,\si_n} \}_{h_n \to 0, \si_n \to \infty} \text{ such that } \lim\limits_{n \to \infty} \vnorm{ v_n - v }_{\cT_\infty} = 0.
        \end{equation*}
        \end{enumerate}
        
        \end{assumption}

    \end{remark}

    Note that \Cref{assumption4'}(iv) and \Cref{assumption4'}(v) only make sense because of \Cref{assumption4'}(i).
    
    \begin{lemma}
        \Cref{assumption4'} $\Rightarrow$ \Cref{assumption4}.
    \end{lemma}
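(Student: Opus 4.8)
The plan is to dispatch the three items of \Cref{assumption4} one at a time, the first two being bookkeeping and the third carrying the content. Item (i) needs nothing, since \Cref{assumption4'}(iii) is literally the statement of \Cref{assumption4}(i). Before the other two, I would record the basic observation that \Cref{assumption4'}(i) together with \Cref{assumption1}(i) gives a chain of continuous embeddings $\cT_\infty \hookrightarrow \cT_\si \hookrightarrow \cT_0$ for every finite $\si$; in particular any $v \in \cT_\infty$ lies in each $\cT_\si$ and in $\cT_0$, with $\vnorm{v}_{\cT_\si} \leq M_2 \vnorm{v}_{\cT_\infty}$ for the constant $M_2$ of \Cref{assumption4'}(i). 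This comparability of norms is the one genuinely useful ingredient, and it is what makes \Cref{assumption4'}(iv)--(v) meaningful to begin with.

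For \Cref{assumption4}(ii) I would argue by double inclusion at a fixed $h > 0$. The inclusion $W_{\infty,h} \subseteq \cT_\infty \cap \bigl(\bigcup_{\si\in(0,\infty]} W_{\si,h}\bigr)$ is immediate because $W_{\infty,h}\subseteq\cT_\infty$ and $W_{\infty,h}$ is one of the sets in the union (take $\si=\infty$). For the reverse inclusion, if $u\in\cT_\infty$ and $u\in W_{\si_0,h}$ for some $\si_0\in(0,\infty]$, then $u\in\cT_\infty\cap W_{\si_0,h}$, which by \Cref{assumption4'}(ii) equals $W_{\infty,h}$. Hence the two sets coincide.

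The substance is \Cref{assumption4}(iii). Given $v\in W_{\infty,h}$ (with the convention $W_{\infty,0}=\cT_\infty$) and an admissible sequence $\{(\si_n,h_n)\}$ — i.e. $\si_n\to\infty$, and $h_n\equiv h$ if $h>0$ or $h_n\to0$ if $h=0$ — I would take the \emph{constant} recovery sequence $v_{\si_n}:=v$. It lies in $\cT_0$ by the embedding above, trivially $\vnorm{v_{\si_n}-v}_{\cT_0}\to0$, and $\cE_\infty(v)=\lim_n\cE_{\si_n}(v)=\lim_n\cE_{\si_n}(v_{\si_n})$ by the pointwise limit \Cref{assumption4'}(iv) (applicable since $v\in\cT_\infty\subseteq\cT_{\si_n}$). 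It then remains to produce $v_n\in W_{\si_n,h_n}$ with $\vnorm{v_n-v}_{\cT_{\si_n}}\to0$. If $h>0$, then $h_n\equiv h$ and $v\in W_{\infty,h}=\cT_\infty\cap W_{\si_n,h}\subseteq W_{\si_n,h_n}$ by \Cref{assumption4'}(ii), so $v_n:=v$ works with zero error. If $h=0$, I would invoke the asymptotic density \Cref{assumption4'}(v), applied along the prescribed sequence, to get $v_n\in W_{\si_n,h_n}$ with $\vnorm{v_n-v}_{\cT_\infty}\to0$; since this norm is finite, $v_n\in\cT_\infty$, hence $v_n\in\cT_\infty\cap W_{\si_n,h_n}=W_{\infty,h_n}\subseteq W_{\si_n,h_n}$ by \Cref{assumption4'}(ii), and finally $\vnorm{v_n-v}_{\cT_{\si_n}}\leq M_2\vnorm{v_n-v}_{\cT_\infty}\to0$ by \Cref{assumption4'}(i).

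The step I expect to be the main obstacle is the $h=0$ case of item (iii): one has to be sure the approximants produced by \Cref{assumption4'}(v) can be taken in the finite-dimensional spaces $W_{\si_n,h_n}$ attached to the \emph{given} sequence (rather than to some convenient sequence of resolutions), and that convergence in the fixed norm $\cT_\infty$ passes to the weaker, $n$-dependent norms $\cT_{\si_n}$ — which is exactly what the uniform norm comparison in \Cref{assumption4'}(i) supplies, after one notes that finiteness of $\vnorm{v_n-v}_{\cT_\infty}$ already forces $v_n\in W_{\infty,h_n}$. By contrast, in the $h>0$ regime the fixed space $W_{\infty,h}$ already contains $v$, so no real approximation is needed, and items (i) and (ii) are purely formal.
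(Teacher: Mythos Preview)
Your proposal is correct and follows essentially the same route as the paper: item (i) is immediate from \Cref{assumption4'}(iii); item (ii) is unpacked from \Cref{assumption4'}(ii); and for item (iii) you use the constant recovery sequence $v_{\si_n}=v$ justified by \Cref{assumption4'}(iv), then take $v_n=v$ when $h>0$ via \Cref{assumption4'}(ii) and use asymptotic density plus the uniform norm bound \Cref{assumption4'}(i) when $h=0$. The only cosmetic difference is that you spell out the double inclusion for item (ii) and add the (harmless but redundant) observation that $v_n\in W_{\infty,h_n}$ in the $h=0$ case, whereas the paper's proof is more terse.
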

    \begin{proof}
        First, \Cref{assumption4}(i) is a restatement of \Cref{assumption4'}(iii). Next, the equality \Cref{assumption4}(ii) follows from \Cref{assumption4'}(ii). 

        Finally, we show both cases of \Cref{assumption4}(iii). Let $h > 0$. Then we choose $\{ v_{\si_n} \}_n = v$ for all $n$, i.e. we choose the constant recovery sequence, which is possible thanks to \Cref{assumption4'}(iv). Then, by the description of $W_{\infty,h}$ in \Cref{assumption4'}(ii), we can choose the sequence $\{ v_n \}_n = v$ for all $n$, which trivially satisfies the asymptotic approximation property for the constant recovery sequence. 

        Now, let $h = 0$. Then we again choose the the constant recovery sequence $v_{\si_n} = v$ for all $n$. Then we choose the sequence $\{v_n\}$ using the asymptotic density property \Cref{assumption4'}(v) to get $\vnorm{v-v_n}_{\cT_{\si_n}} \leq M_2 \vnorm{v-v_n}_{\cT_\infty} \to 0$.
    \end{proof}

	\subsection{Gamma-convergence results}
	
	Before we show the main convergence results in the next section, we need some auxiliary results on the $\Gamma$-limits of the functionals $\cE_\sigma$ when restricted to finite-dimensional subspaces. We collect these in the following theorems in this section.

    \begin{figure}[htbp]
        \centering
        \begin{tikzpicture}[scale=0.9]
			\tikzset{to/.style={->,>=stealth',line width=.8pt}}
			\node(v1) at (0,2.5) {\textcolor{red}{$\cE_{\si, h}$}};
			\node (v2) at (5.,2.5) {\textcolor{red}{$\cE_{\infty, h}$}};
			\node (v3) at (0,0) {\textcolor{red}{$\overline{\cE}_\si$}};
			\node (v4) at (5.,0) {\textcolor{red}{$\overline{\cE}_\infty$}};
			\draw[to] (v1.east) -- node[midway,above] {\footnotesize{\textcolor{blue}{\sc\Cref{theorem:gammaconvergence:sigmatoinf}}}}  node[midway,below] {\footnotesize{\textcolor{blue}{$\si\to\infty$}}}
			(v2.west);
			\draw[to] (v1.south) -- node[midway,left] {\footnotesize{\textcolor{blue}{\sc  \Cref{theorem:gammalimit:hto0}}}} node[midway,right] {\footnotesize{\textcolor{blue}{$h\to0$}}} (v3.north);
			\draw[to] (v3.east) -- node[midway,below] {\footnotesize{\textcolor{blue}{\sc \Cref{assumption3}}}} node[midway,above] {\footnotesize{\textcolor{blue}{$\si\to\infty$}}} (v4.west);
			\draw[to] (v2.south) -- node[midway,right] {\footnotesize{\textcolor{blue}{\sc \Cref{theorem:gammalimit:hto0}}}} node[midway,left] {\footnotesize{\textcolor{blue}{$h\to0$}}}(v4.north);
			\draw[to] (v1.south east) to[out = 2, in = 180, looseness = 1.2] node[midway,yshift=-0.05cm] 
            {\footnotesize\hbox{\shortstack[c]{ {\textcolor{blue}{\hspace{-4pt}\sc$h\to0$}}\\{\textcolor{blue}{$\si\to\infty$}}\\{\textcolor{blue}{\sc\Cref{theorem:gammaconvergence:ac}} }}}}
            (v4.north west);
		\end{tikzpicture}
		\vskip-10pt
		\caption{A diagram of the gamma-convergence results for the functionals.}\label{fig:diagram2}
        \end{figure}
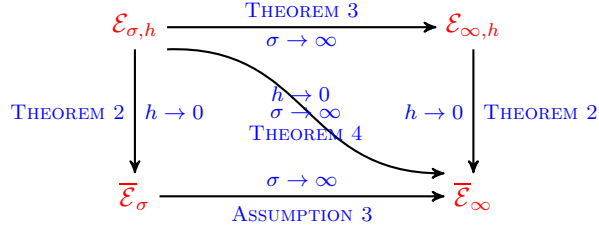

	To fix ideas, for any $\sigma \in (0,\infty]$ we define the functionals $\cE_{\sigma,h}$ to be the restrictions of $\cE_\sigma$ to the subspaces $W_{\sigma,h}$ that remain compatible with the framework of $\Gamma$-convergence, i.e.
	\begin{equation*}
		\cE_{\sigma,h}(v) := 
		\begin{cases}
			\cE_\sigma(v) &\text{ for } v \in W_{\sigma,h}\,, \\
			+\infty &\text{ for } v \in \cT_0 \setminus W_{\sigma,h}\,.
		\end{cases}
	\end{equation*}
	With this definition, \eqref{eq:minprobnum} can be restated as
	\begin{equation*}
		\text{ find } u_{\sigma,h} \in W_{\sigma,h} \text{ such that } \cE_{\sigma,h}(u_{\sigma,h}) = \min_{ v \in W_{\sigma,h} } \cE_{\sigma,h}(v).
	\end{equation*}

	\begin{theorem}[Convergence as $h \to 0$ with fixed $\sigma$]\label{theorem:gammalimit:hto0}
		Under all previous assumptions, the functionals $\{\cE_{\sigma,h} \}_{h}$ $\Gamma$-converge as $h \to 0$ in the $\cT_\sigma$-strong topology to $\overline{\cE}_\sigma$.
	\end{theorem}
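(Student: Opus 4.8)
The plan is to check directly the two clauses in the definition of $\Gamma$-convergence, working in the metric space $X = \cT_\sigma$ with its strong topology. With that choice $\overline{\cE}_\sigma$ restricted to $\cT_\sigma$ is simply $\cE_\sigma$, and $\cE_{\sigma,h}$ restricted to $\cT_\sigma$ equals $\cE_\sigma$ on $W_{\sigma,h}$ and $+\infty$ on $\cT_\sigma \setminus W_{\sigma,h}$, so the candidate $\Gamma$-limit is $\cE_\sigma$. The whole argument rests on only two ingredients already available: the local Lipschitz continuity of $\cE_\sigma$ (\Cref{assumption2}(i)), which converts $\cT_\sigma$-norm convergence into convergence of energies, and the density of $\{W_{\sigma,h}\}_h$ in $\cT_\sigma$ (\Cref{assumption4}(i)); the coercivity of \Cref{assumption2}(ii) plays no role since $\sigma$ is frozen.

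\emph{Liminf inequality.} Fix $v \in \cT_\sigma$, a sequence $h_n \to 0$, and $v_n \to v$ in $\cT_\sigma$. If $\liminf_n \cE_{\sigma,h_n}(v_n) = +\infty$ there is nothing to prove; otherwise extract a (not relabeled) subsequence along which $\cE_{\sigma,h_n}(v_n)$ converges to this liminf with every term finite, so that $v_n \in W_{\sigma,h_n}$ and $\cE_{\sigma,h_n}(v_n) = \cE_\sigma(v_n)$. Since $\vnorm{v_n}_{\cT_\sigma} \to \vnorm{v}_{\cT_\sigma}$ and $\varphi$ is continuous on $[0,\infty)^2$, the numbers $\varphi(\vnorm{v_n}_{\cT_\sigma},\vnorm{v}_{\cT_\sigma})$ stay bounded, and \Cref{assumption2}(i) gives $|\cE_\sigma(v_n) - \cE_\sigma(v)| \le \varphi(\vnorm{v_n}_{\cT_\sigma},\vnorm{v}_{\cT_\sigma})\,\vnorm{v_n - v}_{\cT_\sigma} \to 0$. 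Hence $\liminf_n \cE_{\sigma,h_n}(v_n) = \cE_\sigma(v) = \overline{\cE}_\sigma(v)$, which is the desired inequality (in fact an equality on this subsequence).

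\emph{Recovery sequence.} Fix $v \in \cT_\sigma$. By \Cref{assumption4}(i) there is a sequence $u_n \in W_{\sigma,h_n}$ with $h_n \to 0$ and $\vnorm{v - u_n}_{\cT_\sigma} \to 0$; invoking \Cref{assumption2}(i) once more (the norms $\vnorm{u_n}_{\cT_\sigma}$ being bounded) yields $\cE_{\sigma,h_n}(u_n) = \cE_\sigma(u_n) \to \cE_\sigma(v)$, so $\limsup_n \cE_{\sigma,h_n}(u_n) \le \overline{\cE}_\sigma(v)$. When the sequential form of $\Gamma$-convergence is spelled out along a prescribed sequence $h_n \to 0$, one instead takes $u_n$ to be a $\cT_\sigma$-nearest point to $v$ in $W_{\sigma,h_n}$ — which exists because $W_{\sigma,h_n}$ is finite-dimensional — and uses that $\mathrm{dist}_{\cT_\sigma}(v,W_{\sigma,h}) \to 0$ as $h \to 0$; the estimate above is then unchanged. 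Together with the liminf inequality this establishes $\cE_{\sigma,h} \xrightarrow{\Gamma} \overline{\cE}_\sigma$ in the $\cT_\sigma$-strong topology.

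I do not expect a genuine obstacle here: this is essentially a warm-up lemma, whose content is that the restriction of a locally continuous functional to a family of finite-dimensional subspaces that is dense in the ambient space $\Gamma$-converges back to it. The only delicate points are technical — keeping careful track of the $+\infty$ values in the extended-real-valued setting (in particular deducing $v_n \in W_{\sigma,h_n}$ from finiteness of $\cE_{\sigma,h_n}(v_n)$), and using the precise structure of the continuity bound, with the continuous prefactor $\varphi$ evaluated at convergent (hence bounded) norm arguments, to upgrade $\cT_\sigma$-convergence to convergence of the energies along the sequence.
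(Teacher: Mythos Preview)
Your proof is correct and follows essentially the same approach as the paper: both verify the two $\Gamma$-convergence inequalities directly, and the recovery sequence argument is identical (invoke \Cref{assumption4}(i) and then \Cref{assumption2}(i)). The only minor difference is in the liminf step: the paper appeals to the $\cT_\sigma$-(weak) lower semicontinuity of $\cE_\sigma$ built into \Cref{assumption2}, whereas you use the quantitative continuity estimate \Cref{assumption2}(i) a second time to conclude $\cE_\sigma(v_n)\to\cE_\sigma(v)$ along the subsequence; both are valid, and your handling of the finiteness issue (passing to a subsequence with finite values before concluding $v_n\in W_{\sigma,h_n}$) is in fact slightly more careful than the paper's phrasing.
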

	
	\begin{proof}
		First we prove the liminf inequality.
		Let $v_h \to v_0$ strongly in $\cT_\sigma$. If $\liminf_{h \to 0} \cE_{\sigma,h}(v_h) = \infty$, then there is nothing to show, and so we can assume that $\liminf_{h \to 0} \cE_{\sigma,h}(v_h) < \infty$. Thus $v_h \in W_{\sigma,h}$ and $\cE_{\sigma,h}(v_h) = \cE_\sigma(v_h)$ for all $h > 0$. 
		Since $\cE_\sigma$ is $\cT_\sigma$-lower semicontinuous, it follows that $\cE_\sigma(v_0) \leq \liminf_{h \to 0} 
        \cE_{\sigma,h}(v_h)$. 
		
		To see the limsup inequality, we choose, for any $v \in \cT_\si$, a sequence $\{v_h\}_h$ that satisfies \Cref{assumption4}(i); this serves as a recovery sequence. Indeed, applying \Cref{assumption2}(i),
        \begin{equation*}
            \begin{split}
                |\cE_{\si,h}(v_h) - \overline{\cE}_\si(v)|
                &= |\cE_{\si}(v_h) - \cE_\si(v)| \\
                &\leq \varphi( \vnorm{v_h}_{\cT_\si}, \vnorm{v}_{\cT_\si} ) \vnorm{v_h - v}_{\cT_\si} \to 0 \text{ as } h \to 0.
            \end{split}
        \end{equation*}

	\end{proof}
	
	\begin{corollary}\label{cor:gammalimit:hto0:weak}
		Under all of the previous assumptions, the functionals $\{\cE_{\sigma,h} \}_{h}$ $\Gamma$-converge as $h \to 0$ in the $\cT_\sigma$-weak topology to $\overline{\cE}_\sigma$.
	\end{corollary}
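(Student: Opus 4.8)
The plan is to bootstrap from the strong-topology statement of \Cref{theorem:gammalimit:hto0}, using that $\cE_\sigma$ is $\cT_\sigma$-weakly lower semicontinuous (\Cref{assumption2}) together with the elementary fact that strong convergence implies weak convergence. Since the definition of $\Gamma$-convergence recalled above is sequential, it suffices to verify the two sequential conditions with ``converges in $X$'' read as ``converges weakly in $\cT_\sigma$.''

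For the liminf inequality, I would take any sequence $\{v_h\}_h$ with $v_h \rightharpoonup v_0$ weakly in $\cT_\sigma$ (so in particular $v_0 \in \cT_\sigma$, hence $\overline{\cE}_\sigma(v_0) = \cE_\sigma(v_0)$, and $\{v_h\}$ is norm-bounded by uniform boundedness). Set $\ell := \liminf_{h\to0}\cE_{\sigma,h}(v_h)$; if $\ell = +\infty$ there is nothing to prove, so assume $\ell < \infty$ and pick a subsequence $h_k \to 0$ with $\cE_{\sigma,h_k}(v_{h_k}) \to \ell$. For large $k$ the energy is finite, so $v_{h_k} \in W_{\sigma,h_k}$ and $\cE_{\sigma,h_k}(v_{h_k}) = \cE_\sigma(v_{h_k})$; since $v_{h_k} \rightharpoonup v_0$ in $\cT_\sigma$, weak lower semicontinuity yields $\overline{\cE}_\sigma(v_0) = \cE_\sigma(v_0) \leq \liminf_{k}\cE_\sigma(v_{h_k}) = \ell$, which is the claimed inequality.

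For the recovery sequence, I would simply reuse the one built in the proof of \Cref{theorem:gammalimit:hto0}: for $v \in \cT_\sigma$ this is the sequence $\{v_h\}$ furnished by \Cref{assumption4}(i), which converges to $v$ strongly --- hence weakly --- in $\cT_\sigma$ and satisfies $\cE_{\sigma,h}(v_h) \to \overline{\cE}_\sigma(v)$; for $v \in \cT_0 \setminus \cT_\sigma$ one has $\overline{\cE}_\sigma(v) = +\infty$ and any sequence converging weakly to $v$ works vacuously. This completes both halves of the definition.

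I do not expect a genuine obstacle here: the only points needing a little care are the bookkeeping in the liminf step --- restricting to a subsequence along which $\cE_{\sigma,h}$ actually coincides with $\cE_\sigma$, so that weak lower semicontinuity of $\cE_\sigma$ applies --- and, if one insists on the topological rather than sequential formulation of $\Gamma$-convergence, recalling that bounded subsets of the (reflexive, and in the applications separable) space $\cT_\sigma$ are weakly metrizable, so that the sequential argument is equivalent to the topological one.
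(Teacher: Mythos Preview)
Your proposal is correct and follows essentially the same route as the paper: the paper's proof simply says to repeat the argument of \Cref{theorem:gammalimit:hto0}, replacing strong lower semicontinuity by the $\cT_\sigma$-weak lower semicontinuity of $\cE_\sigma$ from \Cref{assumption2}, which is exactly what you do. Your extra bookkeeping (extracting a subsequence along which the energies are eventually finite, and the remark on weak metrizability of bounded sets for the sequential-versus-topological issue) is more careful than the paper's one-line justification but not a different method.
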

	
	\begin{proof}
		The result follows from the same methods as the previous theorem, using that $\cE_\sigma$ is $\cT_\sigma$-weakly lower semicontinuous as noted in the proof of \Cref{theorem:convergence1}.
	\end{proof}
	
	\begin{theorem}[Convergence as $\sigma \to \infty$ with fixed  $h$]\label{theorem:gammaconvergence:sigmatoinf}
		Let $h \in (0,h_0]$. Then the functionals $\{\cE_{\sigma,h} \}_h$ $\Gamma$-converge as $\sigma \to \infty$ in the $\cT_0$-strong topology to $\cE_{\infty,h}$.
	\end{theorem}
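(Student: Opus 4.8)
The plan is to verify directly the two defining conditions of $\Gamma$-convergence (the liminf inequality and the existence of a recovery sequence) for the pair $\cE_{\sigma,h} \to \cE_{\infty,h}$ as $\sigma \to \infty$ in the $\cT_0$-strong topology, with $h \in (0,h_0]$ held fixed. As in the proof of \Cref{theorem:gammalimit:hto0}, it suffices to work along an arbitrary sequence $\sigma_n \to \infty$, producing sequences $v_n$ indexed by $n$.

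For the liminf inequality, I would take $v_{\sigma_n} \to v$ in $\cT_0$ and set $L := \liminf_n \cE_{\sigma_n,h}(v_{\sigma_n})$; if $L = +\infty$ there is nothing to prove, so after passing to a subsequence one may assume $\cE_{\sigma_n,h}(v_{\sigma_n}) \to L < \infty$ and is bounded, which forces $v_{\sigma_n} \in W_{\sigma_n,h}$ and $\cE_{\sigma_n,h}(v_{\sigma_n}) = \cE_{\sigma_n}(v_{\sigma_n}) = \overline{\cE}_{\sigma_n}(v_{\sigma_n})$ for all $n$. Two observations then finish this direction. First, each $v_{\sigma_n}$ lies in $W_{\sigma_n,h} \subseteq \bigcup_{\sigma} W_{\sigma,h}$, which is a finite-dimensional subspace of $\cT_0$ and hence closed, so the $\cT_0$-limit $v$ also lies in $\bigcup_\sigma W_{\sigma,h}$. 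Second, applying the liminf inequality of the $\Gamma$-convergence $\overline{\cE}_\sigma \to \overline{\cE}_\infty$ from \Cref{assumption3} to $v_{\sigma_n} \to v$ gives $\overline{\cE}_\infty(v) \leq \liminf_n \overline{\cE}_{\sigma_n}(v_{\sigma_n}) = L < \infty$, so $v \in \cT_\infty$ and $\cE_\infty(v) \leq L$. Combining these with \Cref{assumption4}(ii) yields $v \in \cT_\infty \cap \bigcup_\sigma W_{\sigma,h} = W_{\infty,h}$, and therefore $\cE_{\infty,h}(v) = \cE_\infty(v) \leq L$, as needed.

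For the recovery sequence, fix $v \in \cT_0$. If $v \notin W_{\infty,h}$ then $\cE_{\infty,h}(v) = +\infty$ and the constant sequence $v_n \equiv v$ works trivially, so assume $v \in W_{\infty,h}$, hence $v \in \cT_\infty$ and $\cE_{\infty,h}(v) = \cE_\infty(v)$. I would then invoke \Cref{assumption4}(iii) with $h_n = h$ for all $n$ (the case $h>0$): it supplies a recovery sequence $\{v_{\sigma_n}\}_n$ for the $\Gamma$-limit of \Cref{assumption3}, with $v_{\sigma_n} \in \cT_{\sigma_n}$, $\vnorm{v_{\sigma_n} - v}_{\cT_0} \to 0$ and $\cE_{\sigma_n}(v_{\sigma_n}) \to \cE_\infty(v)$, together with a sequence $v_n \in W_{\sigma_n,h}$ satisfying $\vnorm{v_n - v_{\sigma_n}}_{\cT_{\sigma_n}} \to 0$. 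I claim $\{v_n\}_n$ is the desired recovery sequence. Its $\cT_0$-convergence to $v$ follows from the triangle inequality and the uniform embedding \Cref{assumption1}(i), which gives $\vnorm{v_n - v_{\sigma_n}}_{\cT_0} \leq M_1^{-1}\vnorm{v_n - v_{\sigma_n}}_{\cT_{\sigma_n}} \to 0$. For the energies, since $\cE_{\sigma_n}(v_{\sigma_n})$ is bounded, the coercivity \Cref{assumption2}(ii) together with $\psi(t)/t^p \to 0$ bounds $\vnorm{v_{\sigma_n}}_{\cT_{\sigma_n}}$ uniformly, hence also $\vnorm{v_n}_{\cT_{\sigma_n}}$; continuity of $\varphi$ then keeps $\varphi(\vnorm{v_n}_{\cT_{\sigma_n}}, \vnorm{v_{\sigma_n}}_{\cT_{\sigma_n}})$ bounded, so \Cref{assumption2}(i) gives $|\cE_{\sigma_n}(v_n) - \cE_{\sigma_n}(v_{\sigma_n})| \to 0$. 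Consequently $\cE_{\sigma_n,h}(v_n) = \cE_{\sigma_n}(v_n) \to \cE_\infty(v) = \cE_{\infty,h}(v)$, which gives the required $\limsup$ bound.

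I expect the recovery sequence to be the only delicate step: the abstract recovery sequence $\{v_{\sigma_n}\}$ from \Cref{assumption3} need not lie in the Galerkin subspaces $W_{\sigma_n,h}$, so one must pass to the $\cT_{\sigma_n}$-approximants $v_n \in W_{\sigma_n,h}$ furnished by \Cref{assumption4}(iii), and the main technical point is that this second approximation preserves energy convergence — which is exactly where uniform coercivity is used, to control $\vnorm{v_{\sigma_n}}_{\cT_{\sigma_n}}$ (and thus the continuity modulus $\varphi$ in \Cref{assumption2}(i)). The liminf inequality, by contrast, is essentially a direct consequence of \Cref{assumption3} once one notes that the finite-dimensionality of $\bigcup_\sigma W_{\sigma,h}$ traps the limit in the same subspace.
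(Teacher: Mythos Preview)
Your proof is correct and follows essentially the same approach as the paper: the liminf inequality combines \Cref{assumption3} with the finite-dimensionality of $\bigcup_\sigma W_{\sigma,h}$ and \Cref{assumption4}(ii), while the recovery sequence is built by correcting a continuum recovery sequence $\{v_{\sigma_n}\}$ from \Cref{assumption3} via the approximants $\{v_n\} \subset W_{\sigma_n,h}$ furnished by \Cref{assumption4}(iii), with the energy difference controlled by \Cref{assumption2}(i). If anything, you are slightly more careful than the paper in two places: you deduce $v \in \cT_\infty$ directly from $\overline{\cE}_\infty(v) < \infty$ (the paper instead cites \Cref{assumption1}(ii) and \Cref{assumption2}(ii), which is a detour), and you spell out via coercivity why $\vnorm{v_{\sigma_n}}_{\cT_{\sigma_n}}$ is bounded and hence why the factor $\varphi$ stays under control, a point the paper leaves implicit.
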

	
	\begin{proof}
		To prove the liminf inequality, we can assume similarly to the previous theorem that $\liminf_{\sigma \to \infty} \cE_{\sigma,h}(v_{\sigma,h}) < \infty$ for a sequence $\{ v_{\sigma,h} \}_\sigma$ that converges to $v_{\infty}$ strongly in $\cT_0$.    
		Thus $v_{\sigma,h} \in W_{\sigma,h} \subset \cT_\sigma$ and $\cE_{\sigma,h}(v_{\sigma,h}) = \cE_\sigma(v_{\sigma,h})$ for all $\sigma > 0$.
		Additionally, $v_{\infty,h} \in \cT_\infty$ by \Cref{assumption1}(ii) and \Cref{assumption2}(ii).
		Since $\overline{\cE}_{\sigma}$ $\Gamma$-converges to $\overline{\cE}_{\infty}$ in the $\cT_0$-strong topology by \Cref{assumption3}, it follows that $\overline{\cE}_{\infty}(v_\infty) = \cE_{\infty}(v_\infty) \leq \liminf_{\sigma \to \infty} \cE_{\sigma,h}(v_{\sigma,h})$.
		We just need to show that  $\cE_\infty(v_\infty) = \cE_{\infty,h}(v_\infty)$, i.e. we need to show that $v_\infty \in W_{\infty,h}$.
		But since $v_{\sigma,h} \in W_{\sigma,h} \subset \bigcup_{\sigma \in (0,\infty]} W_{\sigma,h}$ by \Cref{assumption4}(ii), and the dimension of this space, $Z_h$, remains finite, it follows that $v_{\infty} \in \bigcup_{\sigma \in (0,\infty]} W_{\sigma,h}$ as well.
        Thus $v_\infty \in W_{\infty,h}$, and the liminf inequality holds.

		For the limsup inequality, we first note that if $\cE_{\infty,h}(v) < \infty$ then $v \in W_{\infty,h}$ with $\cE_{\infty,h}(v) = \cE_{\infty}(v)$.
		Let $\{v_{\sigma} \}_\sigma \subset \cT_\sigma$ be a recovery sequence for the continuum problem, guaranteed to exist by \Cref{assumption3}, so that $\lim\limits_{\sigma \to \infty} \cE_{\sigma}(v_\sigma) = \cE_\infty(v)$ and $\lim\limits_{\sigma \to \infty} \vnorm{v_\sigma - v}_{\cT_0} = 0$.
		Then in the context of \Cref{assumption4}(iii) with $h_n = h$ for all $n \in \bbN$, we can obtain a sequence $\{ w_\si \in W_{\si,h} \}_{\si}$ with
		$\lim\limits_{\si \to \infty} \vnorm{w_\si - v_\si}_{\cT_\si} =0$.
		Therefore $\lim\limits_{\si \to \infty} \vnorm{ w_\si - v}_{\cT_0} = 0$ by \Cref{assumption1}, and we can use $\{w_\si \in W_{\si,h} \}_\si$ as a recovery sequence by \Cref{assumption2}(i):
		\begin{equation*}
			\begin{split}
				|\cE_{\infty,h}(v) - \cE_{\si,h}(w_\si)| &= |\cE_{\infty}(v) - \cE_\si(w_\si) \pm \cE_{\si}(v_\si)| \\
				&\leq |\cE_\si(w_\si) - \cE_{\si}(v_\si)| \\
				&\qquad 
				\varphi( \vnorm{v_{\si}}_{\cT_\si}, \vnorm{w_{\si}}_{\cT_\si} ) \vnorm{v_{\si} - w_{\si} }_{\cT_\si} \to 0.
			\end{split}
		\end{equation*}

	\end{proof}
	
	\begin{theorem}[Convergence as $\sigma \to \infty$ and $h\to 0$]\label{theorem:gammaconvergence:ac} 
		Let $\sigma_n \to \infty$ and $h_n \to 0$ as $n \to \infty$. Then the functionals $\cE_{\sigma_n,h_n}$ $\Gamma$-converge to $\overline{\cE}_\infty$ in the $\cT_0$-strong topology as $n \to \infty$.
	\end{theorem}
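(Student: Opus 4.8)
The plan is to verify the two defining properties of $\Gamma$-convergence for the family $\{\cE_{\sigma_n,h_n}\}_n$ toward $\overline{\cE}_\infty$ in the $\cT_0$-strong topology, reusing essentially the ingredients already assembled for \Cref{theorem:gammalimit:hto0} and \Cref{theorem:gammaconvergence:sigmatoinf}. Indeed, this statement is the ``diagonal'' analogue of \Cref{theorem:gammaconvergence:sigmatoinf}, with $h_n \to 0$ in place of a fixed $h$.

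For the liminf inequality, let $v \in \cT_0$ and $v_n \to v$ strongly in $\cT_0$. Passing to a subsequence (not relabeled) that realizes $L := \liminf_{n\to\infty}\cE_{\sigma_n,h_n}(v_n)$, there is nothing to prove if $L = \infty$, so I would assume $L < \infty$. Then $\cE_{\sigma_n,h_n}(v_n) < \infty$ for $n$ large, which forces $v_n \in W_{\sigma_n,h_n} \subset \cT_{\sigma_n}$ and $\cE_{\sigma_n,h_n}(v_n) = \cE_{\sigma_n}(v_n) = \overline{\cE}_{\sigma_n}(v_n)$. Since $\sigma_n \to \infty$ and $v_n \to v$ in $\cT_0$, the liminf part of the $\Gamma$-convergence asserted in \Cref{assumption3} gives $\overline{\cE}_\infty(v) \le \liminf_{n\to\infty} \overline{\cE}_{\sigma_n}(v_n) = L$, which is the claim. (As in the proof of \Cref{theorem:gammaconvergence:sigmatoinf}, \Cref{assumption1}(ii) and \Cref{assumption2}(ii) also show $v \in \cT_\infty$ when $L < \infty$, but this is not needed here since $\overline{\cE}_\infty$ carries no $h$-dependence.)

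For the recovery sequence, fix $v \in \cT_0$. If $\overline{\cE}_\infty(v) = \infty$ take $v_n = v$, so the limsup inequality is trivial. Otherwise $v \in \cT_\infty$ with $\overline{\cE}_\infty(v) = \cE_\infty(v)$, hence $v \in W_{\infty,0}$ in the sense of \Cref{assumption4}(iii); applying that assumption (the $h = 0$ case) to the given sequence $\{(\sigma_n,h_n)\}$ yields a recovery sequence $\{v_{\sigma_n}\}_n \subset \cT_{\sigma_n}$ with $\|v_{\sigma_n} - v\|_{\cT_0} \to 0$ and $\cE_{\sigma_n}(v_{\sigma_n}) \to \cE_\infty(v)$, together with $v_n \in W_{\sigma_n,h_n}$ obeying $\|v_n - v_{\sigma_n}\|_{\cT_{\sigma_n}} \to 0$. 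Then $\cE_{\sigma_n,h_n}(v_n) = \cE_{\sigma_n}(v_n)$, and $\|v_n - v\|_{\cT_0} \le \tfrac{1}{M_1}\|v_n - v_{\sigma_n}\|_{\cT_{\sigma_n}} + \|v_{\sigma_n} - v\|_{\cT_0} \to 0$ by \Cref{assumption1}(i), so $\{v_n\}_n$ is the candidate recovery sequence. It remains to show $\cE_{\sigma_n}(v_n) \to \cE_\infty(v)$, which, since $\cE_{\sigma_n}(v_{\sigma_n}) \to \cE_\infty(v)$, reduces to $|\cE_{\sigma_n}(v_n) - \cE_{\sigma_n}(v_{\sigma_n})| \to 0$; by \Cref{assumption2}(i) this difference is at most $\varphi(\|v_n\|_{\cT_{\sigma_n}}, \|v_{\sigma_n}\|_{\cT_{\sigma_n}})\,\|v_n - v_{\sigma_n}\|_{\cT_{\sigma_n}}$.

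The main (and essentially only) obstacle is to keep the factor $\varphi(\|v_n\|_{\cT_{\sigma_n}}, \|v_{\sigma_n}\|_{\cT_{\sigma_n}})$ from blowing up faster than $\|v_n - v_{\sigma_n}\|_{\cT_{\sigma_n}} \to 0$. I would dispatch this using the coercivity \Cref{assumption2}(ii): because $\cE_{\sigma_n}(v_{\sigma_n})$ is convergent hence bounded and $t \mapsto \alpha t^p - \psi(t) \to \infty$ as $t \to \infty$, the inequality $\alpha \|v_{\sigma_n}\|_{\cT_{\sigma_n}}^p \le \cE_{\sigma_n}(v_{\sigma_n}) + \psi(\|v_{\sigma_n}\|_{\cT_{\sigma_n}})$ forces $\{\|v_{\sigma_n}\|_{\cT_{\sigma_n}}\}_n$ to be bounded; then $\|v_n\|_{\cT_{\sigma_n}} \le \|v_{\sigma_n}\|_{\cT_{\sigma_n}} + \|v_n - v_{\sigma_n}\|_{\cT_{\sigma_n}}$ is bounded as well, and continuity of $\varphi$ on $[0,\infty)^2$ gives a uniform bound on the factor. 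Consequently $\cE_{\sigma_n}(v_n) \to \cE_\infty(v) = \overline{\cE}_\infty(v)$, which delivers the limsup inequality and completes the proof. This is exactly the mechanism used in the limsup part of \Cref{theorem:gammaconvergence:sigmatoinf}, now carried out with $h$ allowed to vary along with $\sigma$.
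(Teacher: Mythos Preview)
Your proof is correct and follows essentially the same route as the paper: the liminf inequality is inherited directly from \Cref{assumption3}, and the recovery sequence is built via \Cref{assumption4}(iii) (the $h=0$ case) combined with the continuity estimate \Cref{assumption2}(i). Your explicit justification that $\varphi(\|v_n\|_{\cT_{\sigma_n}},\|v_{\sigma_n}\|_{\cT_{\sigma_n}})$ stays bounded via the coercivity \Cref{assumption2}(ii) is a detail the paper leaves implicit, but otherwise the arguments coincide.
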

	
	\begin{proof}
		First we prove the liminf inequality.
		Let $v_n \subset W_{\sigma_n,h_n}$ converge to $v$ strongly in $\cT_0$. If $\liminf_{n \to \infty} \cE_{\sigma_n,h_n}(v_{n}) = \infty$, then there is nothing to prove, and so we can assume that $\liminf_{n \to \infty} \cE_{\sigma_n,h_n}(v_{n}) = \liminf_{n \to \infty} \cE_{\sigma_n}(v_{n}) < \infty$.
		But this is equal to $\liminf_{\sigma_n \to \infty} \overline{\cE}_{\sigma_n}(v_n)$, and since $\overline{\cE}_{\sigma}$ $\Gamma$-converges to $\overline{\cE}_{\infty}$ in the $\cT_0$-strong topology by \Cref{assumption3}, it follows that $\overline{\cE}_{\infty}(v) = \cE_{\infty}(v) \leq \liminf_{\sigma_n \to \infty} \overline{\cE}_{\sigma_n}(v_n) \leq \liminf_{n \to \infty} \cE_{\sigma_n,h_n}(v_n)$.
		
		For the limsup inequality, we first note that if $\overline{\cE}_{\infty}(v) < \infty$ then $v \in \cT_\infty$ with $\overline{\cE}_{\infty}(v) = \cE_{\infty}(v)$. Then using the asymptotic approximation for a continuum recovery sequence $\{v_{\sigma_n} \in \cT_{\sigma_n} \}$, we choose an asymptotically-compatible recovery sequence $\{v_n\}$ that satisfies \Cref{assumption4}(iii) to get 
		\begin{equation*}
			\begin{split}
				|\cE_{\sigma_n}(v_n) - \cE_\infty(v)| &\leq |\cE_{\sigma_n}(v_{\sigma_n}) - \cE_{\sigma_n}(v_n)| + |\cE_{\sigma_n}(v_{\sigma_n}) - \cE_\infty(v)| \\
				&\leq \varphi(\vnorm{ v_{\sigma_n} }_{\cT_{\sigma_n}}, \vnorm{v_n}_{\cT_{\sigma_n}}) \vnorm{ v_{\sigma_n} - v_n}_{\cT_{\sigma_n}} \\
				&\qquad + |\cE_{\sigma_n}(v_{\sigma_n}) - \cE_\infty(v)| \to 0 \text{ as } n \to \infty.
			\end{split}
		\end{equation*}
	\end{proof}

	\subsection{Convergence of finite dimensional approximations to minimizers}
 \label{sec:convergnumer}
	
	We show the asymptotic compatibility of discrete solutions to the problems \eqref{eq:minprob}.
	\Cref{prop:convergence} is the main result of the AC framework for minimization problems, which is also illustrated in \Cref{fig:diagram} as done similarly in \cite{tian2014sinum}.
 
	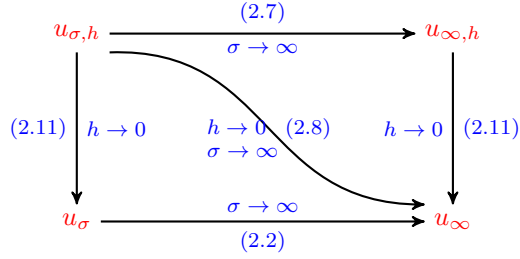
\begin{figure}[htbp]
		\centering
		\begin{tikzpicture}[scale=1]
			\tikzset{to/.style={->,>=stealth',line width=.8pt}}
			\node(v1) at (0,2.5) {\textcolor{red}{$u_{\si, h}$}};
			\node (v2) at (5.,2.5) {\textcolor{red}{$u_{\infty, h}$}};
			\node (v3) at (0,0) {\textcolor{red}{$u_\si$}};
			\node (v4) at (5.,0) {\textcolor{red}{$u_\infty$}};
			\draw[to] (v1.east) -- node[midway,above] {\footnotesize{\textcolor{blue}{\sc  \eqref{eq:gammaconvergence3}}}}  node[midway,below] {\footnotesize{\textcolor{blue}{$\si\to\infty$}}}
			(v2.west);
			\draw[to] (v1.south) -- node[midway,left] {\footnotesize{\textcolor{blue}{\sc  \eqref{eq:gammaconvergence2}}}} node[midway,right] {\footnotesize{\textcolor{blue}{$h\to0$}}} (v3.north);
			\draw[to] (v3.east) -- node[midway,below] {\footnotesize{\textcolor{blue}{\sc \eqref{eq:gammaconvergence1}}}} node[midway,above] {\footnotesize{\textcolor{blue}{$\si\to\infty$}}} (v4.west);
			\draw[to] (v2.south) -- node[midway,right] {\footnotesize{\textcolor{blue}{\sc \eqref{eq:gammaconvergence2}}}} node[midway,left] {\footnotesize{\textcolor{blue}{$h\to0$}}}(v4.north);
			\draw[to] (v1.south east) to[out = 2, in = 180, looseness = 1.2] node[midway,yshift=-0.15cm] {\footnotesize\hbox{\shortstack[l]{ {\textcolor{blue}{\sc $\,h\to0\;\;\;$\eqref{eq:gammaconvergence4}}}\\ {\textcolor{blue}{$\,\si\to\infty$}} }}} (v4.north west);
		\end{tikzpicture}
		\vskip-10pt
		\caption{A diagram for asymptotically compatible schemes and convergence results.}\label{fig:diagram}
	\end{figure}
        
	The compactness results can be obtained from the $\sigma$-asymptotic compactness of \Cref{assumption1}(ii). 
    Combined with the $\Gamma$-convergence results of the previous section, which are summarized in the diagram shown in
    \Cref{fig:diagram2},
    we have the following:
	
    \begin{proposition}[Convergence of minimizers]\label{prop:convergence}
        Given \Cref{assumption1}, \Cref{assumption2}, \Cref{assumption3}, and \Cref{assumption4},
        assume additionally that $\cE_\si$ is convex, so that the functions $u_\si$ and $u_{\si,h}$ defined by
		\eqref{eq:minprob} and \eqref{eq:minprobnum} for any given $\si\in (0,\infty]$ and $h \in [0,h_0]$ are unique.
		We then have the following convergence results.
		\begin{itemize}
                
			\item[\rm (1)] 
                Convergence with a fixed $\si\in \mbox{$(0,\infty]$} $ as $h\to 0$: We have 
                \begin{equation}
                \label{eq:gammaconvergence2:weak}
                \displayindent0pt
			\displaywidth\textwidth
				u_{\si,h} \rightharpoonup u_\si \text{ weakly in } \cT_\si.
			\end{equation}
			
			\item[\rm (2)] 
			Convergence of minimizers in approximation spaces with fixed $h>0$ as $\si\to\infty$:
			for minimizers
			$u_{\si,h}$ of
			\eqref{eq:minprobnum} with $\si\in (0, \infty]$, 
			\begin{equation}
			\label{eq:gammaconvergence3}
			\displayindent0pt
			\displaywidth\textwidth
			\| u_{\si,h}-u_{\infty,h}\|_{\cT_{0}}
			\to
			0\quad \text{as } \si\to\infty.
			\end{equation}
                \item[\rm (3)] Asymptotic compatibility of approximations
			$\{u_{\si,h}\}$ defined by \eqref{eq:minprobnum}
			with a minimizer
			$u_\infty$ defined by \eqref{eq:minprob} for $\si=\infty$:
			$\forall$ sequences $\si_n\to \infty
			$ and $h_n\to 0$, 
			\begin{equation}
                \label{eq:gammaconvergence4}
			\displayindent0pt
			\displaywidth\textwidth
			\| u_{\si_n, h_n}-u_\infty \|_{\cT_0}\to 0, \;\text{ as } \; \si_n\to \infty.
			\end{equation}
		\end{itemize}
	\end{proposition}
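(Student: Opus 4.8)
The plan is to derive all three statements from one template --- the standard consequence of $\Gamma$-convergence for minimizers: if a family of functionals $\Gamma$-converges and is equi-coercive (so that its minimizers lie in a fixed, weakly or strongly, precompact set), then every subsequential limit of the minimizers minimizes the $\Gamma$-limit, and if that limiting minimizer is unique the whole family of minimizers converges. The three $\Gamma$-convergence inputs are already in hand: \Cref{cor:gammalimit:hto0:weak} for (1), \Cref{theorem:gammaconvergence:sigmatoinf} for (2), and \Cref{theorem:gammaconvergence:ac} for (3). The equi-coercivity comes from \Cref{assumption2}(ii) (which promotes a uniform energy bound to a uniform norm bound, using $\psi(t)/t^p\to 0$) together with reflexivity of $\cT_\sigma$ in case (1) and the asymptotically compact embedding \Cref{assumption1}(ii) in cases (2) and (3). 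The convexity hypothesis supplies uniqueness exactly as in \Cref{theorem:convergence1}, so subsequential convergence upgrades to convergence of the full family by the subsequence-of-subsequences principle. Throughout, for $h=0$ we read the statements with the conventions $W_{\infty,0}=\cT_\infty$, $u_{\infty,0}=u_\infty$, so that (3) is the genuinely joint limit and (2) at $\sigma=\infty,h=0$ recovers \eqref{eq:gammaconvergence1}.

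For (1), fix $\sigma$ and let $h\to 0$. First I would bound the minimum values: along any $h_n\to 0$, the density sequence from \Cref{assumption4}(i), say $v_n\in W_{\sigma,h_n}$ with $v_n\to u_\sigma$ in $\cT_\sigma$, satisfies $\cE_\sigma(v_n)\to\cE_\sigma(u_\sigma)$ by the continuity \Cref{assumption2}(i), so $\limsup_{h\to 0}\cE_{\sigma,h}(u_{\sigma,h})\le\cE_\sigma(u_\sigma)=\min_{v\in\cT_\sigma}\cE_\sigma(v)<\infty$. Then \Cref{assumption2}(ii) forces $\{u_{\sigma,h}\}$ to be bounded in $\cT_\sigma$, so reflexivity gives, along any $h_n\to 0$, a subsequence with $u_{\sigma,h_n}\rightharpoonup w$ weakly in $\cT_\sigma$ (hence $w\in\cT_\sigma$). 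The weak liminf inequality of \Cref{cor:gammalimit:hto0:weak} gives $\cE_\sigma(w)=\overline{\cE}_\sigma(w)\le\liminf_n\cE_{\sigma,h_n}(u_{\sigma,h_n})\le\cE_\sigma(u_\sigma)$, so $w$ minimizes $\cE_\sigma$ over $\cT_\sigma$; by uniqueness $w=u_\sigma$, and since the limit is the same along every subsequence, \eqref{eq:gammaconvergence2:weak} follows.

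Cases (2) and (3) run identically, now in the $\cT_0$-strong topology. In (2), fix $h$ and let $\sigma\to\infty$: let $\{w_\sigma\in W_{\sigma,h}\}$ be the recovery sequence for $u_{\infty,h}$ furnished by \Cref{theorem:gammaconvergence:sigmatoinf} (so $w_\sigma\to u_{\infty,h}$ in $\cT_0$ and $\cE_{\sigma,h}(w_\sigma)\to\cE_{\infty,h}(u_{\infty,h})$); minimality gives $\cE_{\sigma,h}(u_{\sigma,h})\le\cE_{\sigma,h}(w_\sigma)$, whence $\limsup_\sigma\cE_{\sigma,h}(u_{\sigma,h})\le\cE_{\infty,h}(u_{\infty,h})<\infty$. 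Then \Cref{assumption2}(ii) bounds $\{u_{\sigma,h}\}$ uniformly in $\cT_\sigma$, and \Cref{assumption1}(ii) extracts a subsequence $u_{\sigma_n,h}\to w$ in $\cT_0$. The liminf inequality of \Cref{theorem:gammaconvergence:sigmatoinf} then yields $\cE_{\infty,h}(w)\le\liminf_n\cE_{\sigma_n,h}(u_{\sigma_n,h})\le\cE_{\infty,h}(u_{\infty,h})$, so $w$ minimizes $\cE_{\infty,h}$ over $W_{\infty,h}$ (in particular $w\in W_{\infty,h}$, its finiteness being forced by the inequality); uniqueness gives $w=u_{\infty,h}$, i.e. \eqref{eq:gammaconvergence3}. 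Case (3) is the same with $\sigma_n\to\infty$ and $h_n\to 0$, using \Cref{theorem:gammaconvergence:ac} and the pair $\overline{\cE}_\infty,u_\infty$ in place of $\cE_{\infty,h},u_{\infty,h}$; here \Cref{assumption1}(ii) already places every $\cT_0$-limit point in $\cT_\infty$, so no extra argument about the subspace is needed, and \eqref{eq:gammaconvergence4} follows.

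The main obstacle: there is little depth left, since the substance has been absorbed into \Cref{theorem:gammalimit:hto0}, \Cref{theorem:gammaconvergence:sigmatoinf}, and \Cref{theorem:gammaconvergence:ac}. The two points that genuinely need care are (a) the uniform boundedness of the minimum energies --- this is where a competitor of controlled energy is indispensable, i.e.\ the recovery/density sequences; without it \Cref{assumption2}(ii) is vacuous --- and (b) in case (1), the legitimacy of the sequential definition of $\Gamma$-convergence in the weak topology, which is fine because coercivity confines everything to norm-bounded sets, and on such sets the weak topology of the reflexive space $\cT_\sigma$ (restricted to the separable closed subspace generated by the countably many functions at play) is metrizable. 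One should also keep in mind that ``convex $\Rightarrow$ unique minimizer'' is being invoked precisely in the sense of \Cref{theorem:convergence1}.
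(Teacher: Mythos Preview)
Your proposal is correct and follows essentially the same approach as the paper: in each case you combine the relevant $\Gamma$-convergence result (\Cref{cor:gammalimit:hto0:weak}, \Cref{theorem:gammaconvergence:sigmatoinf}, \Cref{theorem:gammaconvergence:ac}) with equi-coercivity (from \Cref{assumption2}(ii), and \Cref{assumption1}(ii) for the $\cT_0$-topology) and uniqueness from convexity to deduce convergence of minimizers. The paper compresses this by citing \cite[Theorem 1.21]{braides2002gamma} directly, whereas you spell out the subsequence argument and the energy-bound step via recovery/density sequences, but the substance is the same.
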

 
        \begin{remark}
            The convexity of $\cE_\si$ was assumed for ease of stating the convergence results. If $\cE_\si$ is not convex, then one can establish in a similar way the convergence of subsequences of local minimizers to local minimizers, but we do not include a rigorous statement here.
        \end{remark}
	
	\begin{remark}\label{rmk:weakimpliesstr}
		If $\cE_\si(v)$ contains a term whose $1/p$-th power defines an equivalent norm on $\cT_\si$ (the simplest example is $\cE_\si(v) = [v]_{\cT_\si}^p + \vint{f,v}$ for $f \in \cT_{-\si}$) then, in the context of item 1), $u_{\si,h} \to u_{\si}$ as $h \to 0$ in the strong topology of $\cT_\si$.
	\end{remark}
	
	\begin{proof}[Proof of \Cref{prop:convergence}]
		First we prove \eqref{eq:gammaconvergence2:weak}. 
		Since $\cE_{\infty,h}$ is the $\Gamma$-limit of $\cE_{\sigma,h}$ in the $\cT_\sigma$-weak topology by \Cref{cor:gammalimit:hto0:weak}, it suffices to show the following:
		For any sequence $\{ v_h  \}_h \subset \cT_\sigma$ with $\sup_{h > 0} \cE_{\sigma,h}(v_h) < \infty$, there exists $v \in \cT_\sigma$ such that $v_h \rightharpoonup v$ weakly in $\cT_\sigma$. 
		But this is apparent from the bound on $\sup_{h > 0} \vnorm{ v_h }_{\cT_\si} $ implied by \Cref{assumption2}(ii).
		Thus the result follows by applying the $\Gamma$-convergence framework of \cite[Theorem 1.21]{braides2002gamma}.

        Both \eqref{eq:gammaconvergence3} and \eqref{eq:gammaconvergence4} are proved similarly, by applying the framework of \cite[Theorem 1.21]{braides2002gamma}. We use \Cref{theorem:gammaconvergence:sigmatoinf} (resp. \Cref{theorem:gammaconvergence:ac}) and the relative compactness of any sequence $\{ v_\sigma \in W_{\sigma,h} \}$ with $\sup_{\sigma \in (0,\infty)} \cE_\sigma(v_\sigma) < \infty$ (resp. $\{ v_n \in W_{\sigma_n,h_n} \}$ with $\sup_{n \in \bbN} \cE_{\sigma_n}(v_n) < \infty$) implied by \Cref{assumption1}(ii) and \Cref{assumption2}(ii).
	\end{proof}

Even if the energies $\cE_\si$ do not satisfy \Cref{rmk:weakimpliesstr}, the mode of convergence in item 1) can still be improved if the first variations satisfy the following assumption:

\begin{assumption}\label{assumption5}
There exists a function $Q_p \in C^0([0,\infty)^2)$ independent of $\sigma$ such that,  the first variation $\cF_\sigma(u) : \cT_\sigma \to \cT_{-\sigma}$ 
at $u \in \cT_\si$,
defined via $\vint{ \cF_\sigma(u), v} := \frac{d}{dt} \cE_\sigma(u+t w) \big|_{t=0}$ for $w \in \cT_\si$ satisfies: for the solution $u_\si \in  \cT_\si$  of \eqref{eq:minprob} and 
any $v\in  \cT_\si$,
\begin{equation}\label{eq:FirstVariationEstimates:1}
				\begin{split}
					&\vnorm{\cF_\sigma(u_\si) - \cF_{\sigma}(v)}_{\cT_{-\sigma}} \\
					&\leq Q_p( \vnorm{u_\si}_{\cT_\sigma}, \vnorm{v}_{\cT_\sigma})
					\begin{cases}
						| \vint{ \cF_\sigma(u_\si)-\cF_\sigma(v),u_\si-v}|^{\frac{p-1}{p}}, &\text{ if } 1 < p < 2, \\
						|\vint{ \cF_\sigma(u_\si)-\cF_\sigma(v),u_\si-v}|^{1/2}, &\text{ if } p \geq 2,
					\end{cases} \\
				\end{split}
			\end{equation}
		and
			\begin{equation}\label{eq:FirstVariationEstimates:2}
				\begin{split}
					&\vnorm{u_\si-v}_{\cT_\sigma} \\
					&\leq Q_p( \vnorm{u_\si}_{\cT_\sigma}, \vnorm{v}_{\cT_\sigma})
					\begin{cases}
						|\vint{ \cF_\sigma(u_\si)-\cF_\sigma(v),u_\si-v}|^{1/2}, &\text{ if } 1 < p < 2, \\
						| \vint{ \cF_\sigma(u_\si)-\cF_\sigma(v),u_\si-v} |^{1/p}, &\text{ if } p \geq 2.
					\end{cases}
				\end{split}
			\end{equation}
\end{assumption}

With this, we have the following:

\begin{theorem}[Strong convergence with a fixed $\sigma \in (0,\infty\text{]}$ as $h \to 0$]
    Take all the assumptions of the previous theorem. If additionally \Cref{assumption5} is satisfied, then $u_{\si,h} \to u_\si$ as $h \to 0$ in the $\cT_\si$-strong topology, and there exists a constant $C >0$, independent of $h$, such that
    \begin{equation}\label{eq:gammaconvergence2}
		\vnorm{u_{\si,h}-u_\si}_{\cT_\si} \leq
		C 
		\begin{cases}
		  \inf_{v_{\si, h}\in W_{\si, h}}
		  \| v_{\si, h}-u_{\si}\|_{\cT_{\si}}^{p/2}\,, &\text{ if } 1 < p < 2\,, \\
		  \inf_{v_{\si, h}\in W_{\si, h}}
		  \| v_{\si, h}-u_{\si}\|_{\cT_{\si}}^{2/p}\,, &\text{ if } p \geq 2\,.
		\end{cases}
    \end{equation}
\end{theorem}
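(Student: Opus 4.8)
The plan is to derive a Céa-type quasi-optimality estimate from the variational structure, using \Cref{assumption5} to convert the energy-minimality of the Galerkin solution into the two-sided control of $\vnorm{u_{\sigma,h}-u_\sigma}_{\cT_\sigma}$ by the duality pairing $\vint{\cF_\sigma(u_\sigma)-\cF_\sigma(u_{\sigma,h}),\,u_\sigma-u_{\sigma,h}}$. First I would fix $\sigma$ and recall that $u_{\sigma,h}$ minimizes $\cE_\sigma$ over the closed subspace $W_{\sigma,h}$, so $\vint{\cF_\sigma(u_{\sigma,h}),w}=0$ for all $w\in W_{\sigma,h}$ (Galerkin orthogonality of the first variation); likewise $\vint{\cF_\sigma(u_\sigma),w}=0$ for all $w\in\cT_\sigma$ since $u_\sigma$ solves \eqref{eq:minprob}. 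Consequently, for any $v_{\sigma,h}\in W_{\sigma,h}$,
\begin{equation*}
\vint{\cF_\sigma(u_\sigma)-\cF_\sigma(u_{\sigma,h}),\,u_\sigma-u_{\sigma,h}}
=\vint{\cF_\sigma(u_\sigma)-\cF_\sigma(u_{\sigma,h}),\,u_\sigma-v_{\sigma,h}},
\end{equation*}
because $u_{\sigma,h}-v_{\sigma,h}\in W_{\sigma,h}$ kills both first-variation terms.

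Next I would bound the right-hand side by the duality inequality, $\big|\vint{\cF_\sigma(u_\sigma)-\cF_\sigma(u_{\sigma,h}),\,u_\sigma-v_{\sigma,h}}\big|\le \vnorm{\cF_\sigma(u_\sigma)-\cF_\sigma(u_{\sigma,h})}_{\cT_{-\sigma}}\vnorm{u_\sigma-v_{\sigma,h}}_{\cT_\sigma}$, and then invoke \eqref{eq:FirstVariationEstimates:1} (with $v=u_{\sigma,h}$) to replace the dual norm of the first-variation difference by a power of the same pairing. Writing $D:=\big|\vint{\cF_\sigma(u_\sigma)-\cF_\sigma(u_{\sigma,h}),\,u_\sigma-u_{\sigma,h}}\big|$ and $E(v_{\sigma,h}):=\vnorm{u_\sigma-v_{\sigma,h}}_{\cT_\sigma}$, in the regime $p\ge 2$ this yields $D\le Q_p\,D^{1/2}E(v_{\sigma,h})$, hence $D^{1/2}\le Q_p\,E(v_{\sigma,h})$, so $D\le Q_p^2 E(v_{\sigma,h})^2$; then \eqref{eq:FirstVariationEstimates:2} gives $\vnorm{u_\sigma-u_{\sigma,h}}_{\cT_\sigma}\le Q_p\,D^{1/p}\le C\,E(v_{\sigma,h})^{2/p}$. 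In the regime $1<p<2$, \eqref{eq:FirstVariationEstimates:1} gives $D\le Q_p\,D^{(p-1)/p}E(v_{\sigma,h})$, hence $D^{1/p}\le Q_p\,E(v_{\sigma,h})$, so $D\le Q_p^p E(v_{\sigma,h})^p$; then \eqref{eq:FirstVariationEstimates:2} gives $\vnorm{u_\sigma-u_{\sigma,h}}_{\cT_\sigma}\le Q_p\,D^{1/2}\le C\,E(v_{\sigma,h})^{p/2}$. Since $v_{\sigma,h}\in W_{\sigma,h}$ is arbitrary, taking the infimum produces \eqref{eq:gammaconvergence2}. Finally, the strong convergence $u_{\sigma,h}\to u_\sigma$ follows because the right-hand side of \eqref{eq:gammaconvergence2} tends to $0$: by \Cref{assumption4}(i) (density of $\bigcup_h W_{\sigma,h}$ in $\cT_\sigma$) the infimum $\inf_{v_{\sigma,h}\in W_{\sigma,h}}\vnorm{v_{\sigma,h}-u_\sigma}_{\cT_\sigma}\to 0$ as $h\to 0$, and the exponents $p/2$ and $2/p$ are positive.

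One subtlety to handle carefully: the constant $Q_p$ in \Cref{assumption5} depends on $\vnorm{u_\sigma}_{\cT_\sigma}$ and $\vnorm{u_{\sigma,h}}_{\cT_\sigma}$, so to get a bound independent of $h$ I need a uniform-in-$h$ bound on $\vnorm{u_{\sigma,h}}_{\cT_\sigma}$. This follows from \Cref{assumption2}(ii) together with $\cE_\sigma(u_{\sigma,h})\le\cE_\sigma(0)$ (as $0\in W_{\sigma,h}$, or any fixed element), which bounds $\cE_\sigma(u_{\sigma,h})$ uniformly, and then coercivity plus the sublinear-growth property of $\psi$ gives $\sup_{h}\vnorm{u_{\sigma,h}}_{\cT_\sigma}<\infty$; since the infimum side is also eventually bounded by $\vnorm{u_\sigma}_{\cT_\sigma}$ plus a fixed quantity, continuity of $Q_p$ on $[0,\infty)^2$ supplies a finite $\sigma$-dependent (but $h$-independent) constant $C$. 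The main obstacle — really the only nontrivial point — is the bookkeeping of the two exponent regimes in \Cref{assumption5} and making sure the powers combine correctly to the stated $p/2$ and $2/p$; everything else is the standard Galerkin-orthogonality argument adapted to the nonlinear (monotone-type) setting.
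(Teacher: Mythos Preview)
Your proposal is correct and follows essentially the same route as the paper: Galerkin orthogonality of the first variations gives $\vint{\cF_\sigma(u_\sigma)-\cF_\sigma(u_{\sigma,h}),\,u_\sigma-u_{\sigma,h}}=\vint{\cF_\sigma(u_\sigma)-\cF_\sigma(u_{\sigma,h}),\,u_\sigma-v_{\sigma,h}}$, one applies the duality bound together with \eqref{eq:FirstVariationEstimates:1} to absorb and control $D$ by a power of $E(v_{\sigma,h})$, and then \eqref{eq:FirstVariationEstimates:2} converts this into the stated norm estimate. Your treatment is in fact slightly more detailed than the paper's, in that you spell out explicitly why the $Q_p$ factors are uniformly bounded in $h$ (via coercivity and $\cE_\sigma(u_{\sigma,h})\le\cE_\sigma(0)$) and why the infimum tends to zero (via \Cref{assumption4}(i)), points the paper handles in a single sentence.
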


\begin{proof}
		The standard properties of the Ritz-Galerkin approximation applied to the Euler-Lagrange equations gives
		\begin{equation*}
			\begin{split}
				\vint{\cF_\si(u_{\si,h}) - \cF_\si(u_\si),u_{\si,h} - u_{\si}} 
				&= -\vint{\cF_\si(u_{\si,h}) - \cF_\si(u_\si),u_{\si}} \\
				&=\vint{\cF_\si(u_{\si,h}) - \cF_\si(u_\si),v_{\si,h} - u_{\si}} \quad \forall v_{\si,h} \in W_{\si,h}.
			\end{split}
		\end{equation*}
		Therefore,
		\begin{equation*}
			\begin{split}
				&|\vint{\cF_\si(u_{\si,h}) - \cF_\si(u_\si),u_{\si,h} - u_{\si}}| \\
				&= |\vint{\cF_\si(u_{\si,h}) - \cF_\si(u_\si),v_{\si,h} - u_{\si}}| \\
				&\leq \Vnorm{ \cF_\si(u_{\si,h}) - \cF_\si(u_\si) }_{\cT_{-\si}} \vnorm{ v_{\si,h} - u_{\si} }_{\cT_\si}\,.
			\end{split}
		\end{equation*}
		Applying \eqref{eq:FirstVariationEstimates:1} in  \Cref{assumption5} and absorbing the resulting term, we get
		\begin{equation*}
			\begin{split}
				|\vint{\cF_\si(u_{\si,h}) - \cF_\si(u_\si),u_{\si,h} - u_{\si}}|^{\frac{1}{p}}
				&\leq Q_p( \vnorm{u_\si}_{\cT_\si}, \vnorm{u_{\si,h}}_{\cT_\si} ) \vnorm{u_\si - v_{\si,h}}_{\cT_\si}
        \end{split}
        \end{equation*}
        for $1 < p < 2$ while for $p \geq 2$
        \begin{equation*}
            \begin{split}
				|\vint{\cF_\si(u_{\si,h}) - \cF_\si(u_\si),u_{\si,h} - u_{\si}}|^{\frac{1}{2}} 
				&\leq Q_p( \vnorm{u_\si}_{\cT_\si}, \vnorm{u_{\si,h}}_{\cT_\si} ) \vnorm{u_\si - v_{\si,h}}_{\cT_\si}.
			\end{split}
		\end{equation*}
		Then using the uniform bounds on the $\cT_\si$ norms of $u_\si$ and $u_{\si,h}$, we place this estimate into \eqref{eq:FirstVariationEstimates:2} to get
		\begin{equation*}
			\begin{split}
				\vnorm{u_{\si} - u_{\si,h}}_{\cT_\si}
				&\leq \vnorm{u_\si - v_{\si,h}}_{\cT_\si}^{p/2}, \qquad 1 < p < 2, \\
				\vnorm{u_{\si} - u_{\si,h}}_{\cT_\si}
				&\leq \vnorm{u_\si - v_{\si,h}}_{\cT_\si}^{2/p}, \qquad p \geq 2. \\
			\end{split}
		\end{equation*}    
\end{proof}


\section{Application to nonlocal models with heterogeneous boundary localization and their FEM approximations}\label{sec:example}
	
	For $d\geq 1$, we let $\Omega \subset \R^d$ be an open connected set (a domain) that is bounded and Lipschitz. 
	On the domain $\Omega$, we consider a family of nonlocal function spaces involving boundary localization as introduced in 
	\cite{scott2023nonlocal}.
	
	Before any technical discussion, let us make a general comment. As noted earlier, a special feature of the framework of AC schemes developed in \cite{tian2014sinum,tian2020sirv} is to utilize the increased regularity of solution in the limiting process. Intuitively, the boundary localization enhances solution regularity, thus, we expect to be able to extend the framework to the case here. However, the boundary localization brings out additional complications such as the singularity of the nonlocal interaction kernel at the boundary. Thus, we need to carry out careful estimates to ensure the conclusions remain valid. This is achieved, largely with the help of the analytical studies in \cite{scott2023nonlocal,scott2023nonlocal2}.

	\subsection{Nonlocal function spaces}\label{sec:FunctionSpaces}
	We follow \cite{scott2023nonlocal} to present more general spaces associated with $L^p$ spaces for some
	given exponent $p \in (1,\infty)$  and $\beta \in [0,d+p)$ and constant $\delta$, which could be used to study nonlinear problems.
	First, we introduce the Banach space
	\begin{equation*}
		\mathfrak{W}^{\beta,p}[\delta;q](\Omega) := \{ u \in L^p(\Omega) \, :\, [u]_{ \mathfrak{W}^{\beta,p}[\delta;q](\Omega) } < \infty \}\,, 
	\end{equation*}
	equipped with the norm determined by
	$
	\Vnorm{u}_{\mathfrak{W}^{\beta,p}[\delta;q](\Omega)}^p := \Vnorm{u}_{L^p(\Omega)}^p + [u]_{\mathfrak{W}^{\beta,p}[\delta;q](\Omega)}^p$, where
	the nonlocal seminorm is defined by 
	\begin{equation}\label{eq:Intro:NonlocalSeminorm}
		[u]_{\mathfrak{W}^{\beta,p}[\delta;q](\Omega)}^p =   \int_{\Omega} \int_{\Omega} \gamma_{\beta,p}[\delta;q](\bx,\by) |u(\by)-u(\bx)|^p \, \rmd \by \, \rmd \bx\,,
	\end{equation}
	The constant
	$\delta > 0$ is the \textit{bulk horizon parameter} and
	the kernel in \eqref{eq:Intro:NonlocalSeminorm} is defined as
	\begin{equation}\label{eq:Intro:kernelgamma}
		\gamma_{\beta,p}[\delta;q](\bx,\by) := \mathds{1}_{ \{ |\by-\bx| < \delta q(\dist(\bx,\p \Omega)) \} } \frac{ C_{d,\beta,p}^\gamma }{ |\bx-\by|^{\beta} } \frac{1}{ (\delta q(\dist(\bx,\p \Omega)))^{d+p-\beta} }\,.
	\end{equation}
	For a Lebesgue measurable set $A \subset \R^d$, $\mathds{1}_A$ defines its standard characteristic function.
	$C_{d,\beta,p}^\gamma > 0$ is 
	a normalization constant
	so that 
	for any $\bx \in \Omega$, 
	\begin{equation}\label{eq:Intro:StdKernelNormalization}
		\int_{B(0,1)} \gamma_{\beta,p}[\delta;q](\bx,\by)|\bx-\by|^p \, \rmd \by = \int_{B(0,1)} \frac{ C_{d,\beta,p}^\gamma }{ |\bsxi|^{\beta-p} } \, \rmd \bsxi = \frac{ \sqrt{\pi} \Gamma ( \frac{d+p}{2} ) }{ \Gamma(\frac{p+1}{2} ) \Gamma(\frac{d}{2}) } := \overline{C}_{d,p}\,,
	\end{equation}
    with $B(0,1)$ denoting the unit ball centered at the origin in $\mathbb{R}^d$ and $\Gamma(z)$ denoting the Euler gamma function.
    To control the rate of localization at the boundary, a rate function $q$ is introduced in  \Cref{eq:Intro:kernelgamma}, and will be specified later. 
    
    The nonlocal problem we treat is a minimization problem with an inhomogeneous Neumann-type constraint on $\p \Omega$ (although other common boundary conditions can be considered; see \cite{scott2023nonlocal,scott2023nonlocal2}, and \Cref{rmk:Dirichlet} below). 
	For this, we follow \cite{scott2023nonlocal} to define
	\begin{equation}\label{eq:HomNonlocSpDef}
		\begin{split}
			\mathring{\mathfrak{W}}^{\beta,p}[\delta;q](\Omega) := \left\{ u \in \mathfrak{W}^{\beta,p}[\delta;q](\Omega) \, : \, (u)_\Omega := \frac{1}{|\Omega|}\int_{\Omega} u(\bx) \, \rmd \bx = 0  \right\}\,.
		\end{split}
	\end{equation}
	
	To apply the abstract framework in \Cref{sec:ac}, we let 
	\begin{defn}\label{definitionspace}
		For $1<p<\infty$ and a maximum threshold $\delta_0$ defined below,
		\beq
		\label{eq:spacefamily}
		\cT_\si = \left \{
		\begin{aligned}
			& \mathring{W}^{1,p}(\Omega)\quad \text{for } \si = \infty,\\[2pt]
                & \mathring{\mathfrak{W}}^{\beta,p}[\min\{ \delta_0, 1/\si \};q](\Omega) \quad \text{for } \si \in (0,\infty),\\[2pt]
			&L^p(\Om) \quad \text{for } \si =0,\\[2pt]
   			&\{ f \in [\mathfrak{W}^{\beta,p}[\min \{ \delta_0, 1/\si \};q](\Omega)]^* \, : \, \vint{f,1} = 0 \} \quad \text{for } \si \in (-\infty,0), \\[2pt]
			&\{ f \in [W^{1,p}(\Omega)]^* \, : \, \vint{f,1} = 0 \} \quad \text{for } \si = -\infty. 
		\end{aligned}
		\right.
		\eeq
	\end{defn}
	
	This choice of dual space permits the consideration of more than just distributional derivatives. For example, functions $f_0 \in L^{\frac{p}{p-1}}(\Omega)$, $\bff_1 \in L^{\frac{p}{p-1}}(\Omega;\bbR^d)$ and $g \in L^{\frac{p}{p-1}}(\p \Omega)$ can define a functional $f \in [W^{1,p}(\Omega)]^*$ via
	\begin{equation*}
		\vint{f,u} := \int_{\Omega} f_0(\bx) u(\bx) \, \rmd \bx + \int_{\Omega} \bff_1(\bx) \cdot \grad u(\bx) \, \rmd \bx + \int_{\p \Omega} g(\bx) u(\bx) \, \rmd S(\bx),
	\end{equation*}
	and $f \in \cT_{-\infty}$ provided $$\int_{\Omega} f_0(\bx) \, \rmd \bx + \int_{\p \Omega} g(\bx) \, \rmd S(\bx) = 0.$$
	
	\subsection{Definitions and assumptions on the nonlocal models}
	Following \cite{scott2023nonlocal}, to characterize the dependence of the localization on the distance function and to define the nonlocal problems, a rate function $q: [0,\infty) \to [0,\infty)$ and a generalized distance function $\lambda : \overline{\Omega} \to [0,\infty)$ 
    are introduced,
    along with 
    a bulk horizon parameter $\delta>0$ and a kernel function $\rho$.
    For the purposes of this paper, we define
    $$
    q(r) := r \left( \frac{1}{1+\rme^{-r^2}} - \frac{1}{2} \right), \qquad r \in [0,\infty).
    $$
    The generalized distance $\lambda$, the kernel $\rho$, and the horizon $\delta$ are assumed to satisfy:
	%
\begin{align}\label{assump:Localization}
		\left\{
		\begin{aligned} 
			i) & \,\exists\,  \kappa_0 \geq 1 \text{ such that }
			\frac{1}{\kappa_0} \dist(\bx,\p \Omega) \leq \lambda(\bx) \leq \kappa_0 \dist(\bx,\p \Omega),\; \forall  \bx \in \overline{\Omega}\,;\\
			ii) & \, \lambda \in C^0(\overline{\Omega}) \cap C^{\infty}(\Omega) \text{ and } \forall \alpha \in \bbN^d_0, \; \exists\, \kappa_{\alpha} > 0 \text{ such that } \\
            &|D^\alpha \lambda(\bx)| \leq \kappa_{\alpha} |\dist(\bx,\p \Omega)|^{1-|\alpha|} \;  \forall \bx \in \Omega\,.
            \end{aligned}
            \tag{\ensuremath{\rmA_{\lambda}}}\right.	
	\end{align}
	\begin{align}\label{assump:VarProb:Kernel}
		\left\{  
		\begin{gathered}
			\rho \in L^{\infty}(\R)\,, \;     \supp \rho \Subset (-1,1)\,,\;  \frac{1}{ \overline{C}_{d,p} }\int_{B(0,1)} |\bz|^{p-\beta} \rho(|\bz|) \, \rmd \bz = 1\, ;\\
			\rho(-x) = \rho(x),\, \rho(x)\geq 0 \text{ and nonincreasing } \forall x \in  [0,1)\,;\\
			\exists\, \text{ constants } c_{\rho} \in (0,1) \text{ and } C_{\rho}>0\; \text{such that }\, \rho(x)\geq C_\rho, \; \forall x\in  [-c_{\rho}, c_{\rho}].
		\end{gathered}
		\tag{\ensuremath{\rmA_{\rho}}}
		\right.
	\end{align}
	\begin{align}\label{eq:HorizonThreshold2}
		  \delta \in (0, \delta_0) \; \text{ with } \; \delta_0 
		\tag{\ensuremath{\rmA_{\delta}}}
        := \frac{1}{3 \max \{ 1, \kappa_1, 8 \kappa_0^{3} \} }\;
	\end{align}
        More general $q$ and $\lambda$ can be considered. In fact, the $q$ defined above is an archetype of a more general class of rate functions; for instance see \cite{scott2023nonlocal}
        for precise details and further discussion.
        
        With all of the parameters now specified, we 
        introduce 
        variational problems associated to the nonlocal energies
	\begin{equation}\label{eq:fxnal:delta}
		G_{p,\delta}(u) := \int_{\Omega} \int_{\Omega} \frac{ \rho ( |\by-\bx|/\eta_\delta(\bx))}{ \eta_\delta(\bx)^{d+p-\beta}
			|\bx-\by|^{\beta-p}
		} \Phi_p \Big( \frac{ |u(\bx)-u(\by)|}{|\bx-\by|} \Big) \, \rmd \by \, \rmd \bx,
	\end{equation}
        where the heterogeneous localization function $\eta$ is given by
        \begin{equation}\label{eq:localizationfunction}
	\eta_\delta(\bx) := \delta \eta(\bx) := \delta  q(\lambda(\bx)), \quad \bx \in \Omega.
        \end{equation}
	For the sake of simplicity we set $\Phi_p(t) = t^p/p$, and for $\delta = \min\{ \delta_0, 1/\sigma \} > 0$ for $\sigma \in (0,\infty)$ define the functionals $\cG_\sigma$ as
	\begin{equation}\label{eq:fxnal}
		\cG_\sigma(u) := G_{p,\delta}(u) = \frac{1}{p} \int_{\Omega} \int_{\Omega} \frac{ \rho ( |\by-\bx|/\eta_\delta(\bx))}{ \eta_\delta(\bx)^{d+p-\beta}
			|\bx-\by|^{\beta}
		}  |u(\bx)-u(\by)|^p \, \rmd \by \, \rmd \bx\,.
	\end{equation}
	We finally define the parametrized energies for $\si \in (0,\infty)$ as
	\begin{equation}\label{eq:energy}
		\cE_\sigma(u) := \cG_\sigma(u) - \vint{f, K_{\delta} u}, \qquad \delta = \min\{\delta_0,1/\si\}, \; u \in \cT_\si, \text{ and } f \in \cT_{-\infty},
	\end{equation}
    where $K_{\delta}$ is a smoothing operator to be defined below, and we consider the variational problem \eqref{eq:minprob}.
	
	Given the above assumptions, as $\delta \to 0$, i.e., $\si\to \infty$, the functional $\cG_\sigma$ formally converges to its local counterpart, i.e.
	\begin{equation}\label{eq:LocalizedObjectsDef}
			\begin{gathered}
				\cG_\sigma(u) \to \cG_\infty(u) := \frac{1}{p} \int_{\Omega} |\grad u(\bx)|^p \, \rmd \bx\,.
			\end{gathered}
		\end{equation}		
		Hence, the nonlocal minimization problem \eqref{eq:minprob} for $\si = \infty$ in this case is a minimization problem associated to the $W^{1,p}$-seminorm with Neumann constraint
		\begin{equation}\label{eq:energy:local}
			\begin{split}
				\cE_\infty(v) = \cG_{\infty}(v) - \vint{f,v} 
				=\frac{1}{p} \int_\Omega |\grad v(\bx)|^p \, \rmd \bx - \vint{f,v}.
			\end{split}
		\end{equation}

        \begin{remark}\label{rmk:Dirichlet}
            The Neumann problem is for illustrative purposes; we can treat other types of nonlocal boundary-value problems with heterogeneous localization 
            associated to a variational form. For instance, let the homogeneous space $\mathfrak{W}^{\beta,p}_0[\delta;q](\Omega)$ be the closure of $C^{\infty}_c(\Omega)$ in the $\mathfrak{W}^{\beta,p}[\delta;q](\Omega)$-norm. Then, minimizing $\cE_\si$ defined in \eqref{eq:energy} over the space $\cT_\si = \mathfrak{W}^{\beta,p}_0[\min\{\delta_0,1/\si\};q](\Omega)$ for $\si \in (0,\infty)$ with $f \in \cT_{-\infty} = W^{-1,p'}(\Omega)$ corresponds to a Dirichlet-type boundary-value problem.
        \end{remark}

		\subsection{Properties of the nonlocal spaces}
		Given the above assumptions, we have
		
		\begin{proposition}[\cite{scott2023nonlocal}]
			\label{prop:nonlocalspace}
			Under the above assumptions, we have the following.
                \begin{itemize}
			\item[\rm (1)] Embedding:
			\begin{equation}\label{eq:W1pembed}
                \displayindent0pt
			\displaywidth\textwidth
				[u]_{\mathfrak{W}^{\beta,p}[\delta;q](\Omega)} \leq \frac{1}{(1-\delta)^{1/p}} [u]_{W^{1,p}(\Omega)}\,,\quad \forall  u \in W^{1,p}(\Omega).
			\end{equation}
			\item[\rm (2)] Density of smooth functions:
                $C^{\infty}(\overline{\Omega})$ is dense in $\mathfrak{W}^{\beta,p}[\delta;q](\Omega)$.
			\item[\rm (3)] Trace theorem: the trace operator $T : \mathfrak{W}^{\beta,p}[\delta;q](\Omega) \to W^{1-1/p,p}(\p \Omega)$ is well defined with a constant $C = C(d,\beta,q,\Omega)$ such that,
			$$
                \displayindent0pt
			\displaywidth\textwidth
			\Vnorm{Tu}_{W^{1-1/p,p}(\p \Omega)} \leq C \Vnorm{u}_{\mathfrak{W}^{\beta,p}[\delta;q](\Omega)} ,\qquad\forall u \in \mathfrak{W}^{\beta,p}[\delta;q](\Omega)\,.
			$$ 
                Consequently,  
			$$u\in \mathfrak{W}^{\beta,p}_{0}[\delta;q](\Omega)=\{ u \in \mathfrak{W}^{\beta,p}[\delta;q](\Omega) \text{ and } T u = 0 \text{ on } \p \Omega\}.$$
			\item[\rm (4)] Compactness:
			let $\delta\to 0$, and let $\{ u_\delta \}_\delta \subset \mathfrak{W}^{\beta,p}[\delta;q](\Omega)$ be a sequence such that $\sup_{\delta > 0} \Vnorm{u_\delta}_{L^p(\Omega)} \leq C < \infty$, and  
			$
			\sup_{\delta > 0} [u_\delta]_{\mathfrak{W}^{\beta,p}[\delta;q](\Omega)} := B < \infty\,.
			$
			Then $\{ u_\delta \}_\delta$ is precompact in the strong topology of $L^p(\Omega)$. Moreover, any limit point $u$ belongs to $W^{1,p}(\Omega)$ with $\Vnorm{u}_{L^{p}(\Omega)} \leq C$ and $\Vnorm{\grad u}_{L^{p}(\Omega)} \leq B$.
			\item[\rm (5)] Poincare inequalities:
			there exist positive constants $C_D=C_D(d,\beta,p,q,\Omega)$ and $C_N=C_N(d,\beta,p,q,\Omega)$ such that 
			\begin{equation}\label{eq:PoincareDirichlet}
                \displayindent0pt
			\displaywidth\textwidth
				\Vnorm{u}_{L^p(\Omega)} \leq C_D [u]_{\mathfrak{W}^{\beta,p}[\delta;q](\Omega)}\,,
				\quad \forall u \in \mathfrak{W}^{\beta,p}_{0}[\delta;q](\Omega)\,,
			\end{equation}
			\begin{equation}\label{eq:PoincareNeumann}
                \displayindent0pt
			\displaywidth\textwidth
				\Vnorm{u}_{L^p(\Omega)} \leq C_N [u]_{\mathfrak{W}^{\beta,p}[\delta;q](\Omega)}\,,
				\quad \forall u \in \mathring{\mathfrak{W}}^{\beta,p}[\delta;q](\Omega)\,.
			\end{equation}
			\item[\rm (6)] Stability of norm in $\delta$: for constants $0 < \delta_1 \leq \delta_2 < \underline{\delta}_0$,
			\begin{equation*}
                \displayindent0pt
			\displaywidth\textwidth
				\begin{aligned}
					&	\left( \frac{1-\delta_2}{2(1+\delta_2)} \right)^{\frac{d+p-\beta}{p}}[u]_{\mathfrak{W}^{\beta,p}[\delta_2;q](\Omega)} 
					\leq [u]_{\mathfrak{W}^{\beta,p}[\delta_1;q](\Omega)}\\
					&\qquad \leq \left( \frac{\delta_2}{\delta_1} \right)^{1+(d-\beta)/p} [u]_{\mathfrak{W}^{\beta,p}[\delta_2;q](\Omega)}, \quad
					\forall 
					u \in \mathfrak{W}^{\beta,p}[\delta_2;q](\Omega)\,.
				\end{aligned}
			\end{equation*}
			\item[\rm (7)] Stability of the norm in kernel and localization function: for $\rho$ satisfying \eqref{assump:VarProb:Kernel} and $\lambda$ satisfying \eqref{assump:Localization}, 
            the quantity $(p G_{p,\delta}(u))^{1/p}$ defines a seminorm. Moreover,
            there exist positive constants $c$ and $C$ depending only on $d$, $\beta$, $p$, $\rho$, $q$, and $\kappa_0$ such that for any $u \in \mathfrak{W}^{\beta,p}[\delta;q](\Omega)$, 
		\begin{equation}\label{theorem:EnergySpaceIndepOfKernel}
                \displayindent0pt
			\displaywidth\textwidth
				c [u]_{\mathfrak{W}^{\beta,p}[\delta;q](\Omega)}^p \leq
				G_{p,\delta}(u) \leq C  [u]_{\mathfrak{W}^{\beta,p}[\delta;q](\Omega)}^p.
			\end{equation}
        \end{itemize}
		\end{proposition}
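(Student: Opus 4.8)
The plan is to derive all seven items from three structural features of the kernel $\gamma_{\beta,p}[\delta;q]$: its normalization \eqref{eq:Intro:StdKernelNormalization}; the fact that, by \eqref{assump:Localization} and \eqref{eq:HorizonThreshold2} together with $q(r) \leq r$, interactions are confined to the ball $B(\bx,\eta_\delta(\bx))$ with $\eta_\delta(\bx) = \delta q(\lambda(\bx)) \leq \delta \kappa_0 \dist(\bx,\p\Omega) < \dist(\bx,\p\Omega)$, so that the segment $[\bx,\by]$ joining any two interacting points lies entirely in $\Omega$; and a \emph{slowly varying} estimate, namely that $\eta_\delta(\bx)$ and $\eta_\delta(\by)$ are comparable up to constants depending only on $\kappa_0$ and $q$ whenever $|\bx-\by| < \eta_\delta(\bx)$, which follows from the Lipschitz continuity of $\dist(\cdot,\p\Omega)$ and the monotonicity of $q$.

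For item (1) I would write $u(\by)-u(\bx) = \int_0^1 \grad u(\bx + t(\by-\bx)) \cdot (\by-\bx)\,\rmd t$ — legitimate by the segment property — and apply Jensen's inequality in $t$ to the directional derivative of $u$ along $\by-\bx$ rather than to $\grad u$ itself, which is what makes the normalization \eqref{eq:Intro:StdKernelNormalization} produce the sharp constant $1$ on affine functions; inserting the bound into \eqref{eq:Intro:NonlocalSeminorm}, exchanging the order of integration, and for each fixed $t$ changing variables $\bz = \bx + t(\by-\bx)$ then yields the estimate, with the $(1-\delta)^{-1/p}$ factor recording the discrepancy between $\eta_\delta$ at the two interacting points over the interaction region, which the slowly-varying estimate controls. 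Item (2) I would obtain by mollification and a boundary cutoff, passing to the limit in \eqref{eq:Intro:NonlocalSeminorm} by dominated convergence using (1) and the kernel normalization. For item (7), $(p G_{p,\delta})^{1/p}$ is a seminorm since, by \eqref{eq:fxnal:delta}, it equals the $L^p$-norm with respect to a positive measure on $\Omega \times \Omega$ of the linear map $u \mapsto u(\bx)-u(\by)$, so Minkowski's inequality applies; the upper bound in \eqref{theorem:EnergySpaceIndepOfKernel} follows from $\rho(|\bz|) \leq \Vnorm{\rho}_{L^\infty(\R)}\mathds{1}_{\{|\bz|<1\}}$, and the lower bound from $\rho(|\bz|) \geq C_\rho\mathds{1}_{\{|\bz|<c_\rho\}}$ together with item (6) used to rescale the horizon from $c_\rho\delta$ back to $\delta$. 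Item (6) is a direct comparison of the kernels at two horizons: the upper bound is the pointwise identity $\eta_{\delta_1} = (\delta_1/\delta_2)\eta_{\delta_2}$, while the lower bound requires a short-range-to-long-range covering argument in the spirit of horizon-stability estimates for nonlocal seminorms, which produces the displayed constants.

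The remaining items concern the function-space structure. For item (4), the uniform bounds $\Vnorm{u_\delta}_{L^p(\Omega)} \leq C$ and $[u_\delta]_{\mathfrak{W}^{\beta,p}[\delta;q](\Omega)} \leq B$ yield equi-integrability of the translates $u_\delta(\cdot + \bsxi) - u_\delta$ via a Bourgain--Brezis--Mironescu-type estimate, hence $L^p$-precompactness by Fr\'echet--Kolmogorov, while a lower semicontinuity inequality identifies any limit point as a $W^{1,p}(\Omega)$ function with $\Vnorm{u}_{L^p(\Omega)} \leq C$ and $\Vnorm{\grad u}_{L^p(\Omega)} \leq B$. Item (5) then follows by a compactness-contradiction argument at fixed $\delta$: were \eqref{eq:PoincareNeumann} false there would be $u_k \in \mathring{\mathfrak{W}}^{\beta,p}[\delta;q](\Omega)$ with $\Vnorm{u_k}_{L^p(\Omega)} = 1$ and $[u_k]_{\mathfrak{W}^{\beta,p}[\delta;q](\Omega)} \to 0$; passing to a subsequence converging in $L^p$ to a $W^{1,p}$ function with vanishing gradient, this limit would be constant, hence zero since $(u)_\Omega = 0$, contradicting $\Vnorm{u_k}_{L^p(\Omega)} = 1$; the Dirichlet inequality \eqref{eq:PoincareDirichlet} is analogous using the trace from (3). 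For item (3) I would prove $\Vnorm{Tu}_{W^{1-1/p,p}(\p\Omega)}^p \lesssim \Vnorm{u}_{\mathfrak{W}^{\beta,p}[\delta;q](\Omega)}^p$ by noting that in a boundary collar $\eta_\delta(\bx) \approx \delta\dist(\bx,\p\Omega)$, so that flattening $\p\Omega$ via local Lipschitz charts and integrating $\gamma_{\beta,p}[\delta;q]$ across the collar reproduces a Gagliardo seminorm of order $1-1/p$ on $\p\Omega$, with all constants tracked uniformly.

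The hard part will be item (3) and, more generally, every estimate carried out near $\p\Omega$, where $\eta_\delta(\bx) \to 0$ and the kernel \eqref{eq:Intro:kernelgamma} degenerates: extracting the boundary Gagliardo seminorm requires a careful quantitative analysis of $\gamma_{\beta,p}[\delta;q]$ in the collar together with a uniform flattening of the Lipschitz boundary and precise bookkeeping of how $\eta_\delta$ varies there. The $\delta$-uniform compactness and the gradient lower semicontinuity in item (4) are the second most delicate point, but can be handled by adapting Bourgain--Brezis--Mironescu-type arguments to the heterogeneous horizon. All of this is carried out in \cite{scott2023nonlocal,scott2023nonlocal2}.
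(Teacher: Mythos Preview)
The paper does not prove this proposition; it is stated as a citation of results from \cite{scott2023nonlocal}, with no accompanying argument. Your sketch is therefore not being compared against a proof in the paper but rather against the contents of that reference, and as a roadmap of those arguments it is largely accurate and well organized: the segment-plus-Jensen argument for (1), the BBM-type compactness for (4), the kernel comparison for (6)--(7), and the collar/flattening strategy for (3) are all the right ideas.

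One point deserves tightening. In item (5) you run the contradiction argument ``at fixed $\delta$,'' but the statement requires $C_D$ and $C_N$ to be independent of $\delta$. A contradiction sequence must therefore be allowed to carry its own $\delta_k \in (0,\underline{\delta}_0)$, and the compactness you invoke must be uniform in $\delta$. The cleanest route is to use item (6) to transfer the vanishing seminorm at $\delta_k$ to a fixed reference horizon (the lower bound in (6) gives $[u_k]_{\mathfrak{W}^{\beta,p}[\underline{\delta}_0;q](\Omega)} \lesssim [u_k]_{\mathfrak{W}^{\beta,p}[\delta_k;q](\Omega)} \to 0$), and then apply compactness at that fixed horizon; alternatively one can invoke the $\delta\to 0$ compactness of item (4) along a diagonal subsequence. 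Either way, the phrase ``at fixed $\delta$'' undersells what is needed. Apart from this, your outline matches the approach of \cite{scott2023nonlocal,scott2023nonlocal2} and correctly identifies (3) and the boundary-degenerate estimates as the technical heart.
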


		\subsection{Boundary-localized convolutions and their applications}
		A useful tool for nonlocal problems with heterogeneous localization is the boundary-localized  convolution  operators defined by 
		\begin{equation}\label{eq:ConvolutionOperator}
			K_{\delta}u (\bx) := \int_{\Omega} \frac{1}{(\eta_\delta(\bx))^d} \psi \left( \frac{|\by-\bx|}{ \eta_\delta(\bx) } \right) u(\by) \,\rmd \by , \; \bx \in \Omega,
		\end{equation}
		where the convolution kernel $\psi$ satisfies
		\begin{align} \label{Assump:Kernel}
			\left\{    \begin{gathered}
				\psi \in C^{k}(\R) \text{ for some } k \in \bbN_0 \cup \{\infty\}\,,
    \text{ and } \, \psi(-x) = \psi(x) \geq 0,\;\forall x\in\R,\\
				[-c_\psi,c_{\psi}] \subset \supp \psi \Subset (-1,1) \text{ for fixed } c_{\psi} > 0\,, \; \text{ and }
				\int_{\R^d} \psi(|\bx|) \, \rmd \bx = 1\,.
			\end{gathered}
			\tag{\ensuremath{\rmA_{\psi}}}	
			\right.
		\end{align}
		We recall some properties of $K_\delta$ 
		shown in \cite{scott2023nonlocal}.
		
		\begin{proposition}[\cite{scott2023nonlocal,scott2023nonlocal2}]\label{theorem:convolution-estimate}
			Given the above assumptions, there exists a constant $C$ depending only on $d$, $\beta$, $p$, $\psi$, $q$, $\kappa_0$, $\kappa_1$ and $\Omega$ such that: $ \forall u \in \mathfrak{W}^{\beta,p}[\delta;q](\Omega)$, 
			\begin{align}
				&	\label{eq:KdeltaError}
				\Vnorm{u - K_{\delta} u }_{L^p(\Omega)} \leq 
				C \delta q(\diam(\Omega)) [u]_{\mathfrak{W}^{\beta,p}[\delta;q](\Omega)}\,,
			\end{align}
			\begin{equation}\label{eq:ConvergenceOfConv}
				\lim\limits_{\delta \to 0} \Vnorm{K_{\delta} u - u}_{L^p(\Omega)} = 0\,, \quad \forall   u \in L^p(\Omega). 
			\end{equation}
			If in addition \eqref{Assump:Kernel} is satisfied for $k=k_\psi \geq 1$, then
			\begin{equation}\label{eq:Intro:ConvEst:Deriv}
				\begin{split}
					\Vnorm{ K_{\delta} u }_{W^{1,p}(\Omega)} 
					\leq C \Vnorm{u}_{\mathfrak{W}^{\beta,p}[\delta;q](\Omega)}\,,
					\qquad \forall u \in \mathfrak{W}^{\beta,p}[\delta;q](\Omega).
				\end{split}
			\end{equation} 
			\begin{equation}\label{eq:ConvergenceOfConv:W1p}
				\lim\limits_{\delta \to 0} \Vnorm{K_{\delta} u - u}_{W^{1,p}(\Omega)} = 0\,, \quad \forall     u \in W^{1,p}(\Omega). 
			\end{equation}
			Moreover,  we have $T K_{\delta} u = T u$ for all $u \in \mathfrak{W}^{\beta,p}[\delta;q](\Omega)$.
		\end{proposition}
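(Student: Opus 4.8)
The plan is to reduce every assertion to the single representation, valid for $\bx\in\Omega$ whenever $\delta<\delta_0$,
\[
K_\delta u(\bx)-u(\bx)=\int_{B(0,1)}\psi(|\bz|)\big(u(\bx+\eta_\delta(\bx)\bz)-u(\bx)\big)\,\rmd\bz ,
\]
obtained from the substitution $\by=\bx+\eta_\delta(\bx)\bz$ in \eqref{eq:ConvolutionOperator} together with the normalization of $\psi$ in \eqref{Assump:Kernel}. The first thing to check is $B(\bx,\eta_\delta(\bx))\subset\Omega$: since $q(r)\le r/2$ and \eqref{assump:Localization}(i) give $\eta_\delta(\bx)=\delta q(\lambda(\bx))\le\tfrac{\kappa_0}{2}\delta\,\dist(\bx,\p\Omega)$, this is strictly less than $\dist(\bx,\p\Omega)$ by the threshold \eqref{eq:HorizonThreshold2}. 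The same estimate shows $\Psi_\bz(\bx):=\bx+\eta_\delta(\bx)\bz$ maps $\Omega$ into $\Omega$ for $|\bz|\le1$, and since $D\Psi_\bz=I+\bz\otimes\grad\eta_\delta$ with $|\grad\eta_\delta|=\delta|q'(\lambda)\grad\lambda|\le C\delta$ (using $\Vnorm{q'}_{L^\infty}<\infty$ and $|\grad\lambda|\le\kappa_1$ from \eqref{assump:Localization}(ii)), $\Psi_\bz$ is, for $\delta$ small, a bi-Lipschitz diffeomorphism with Jacobian $1+\bz\cdot\grad\eta_\delta\in[\tfrac12,2]$. I would also record that $\Vnorm{\eta_\delta}_{L^\infty(\Omega)}\le\delta q(\kappa_0\diam\Omega)\to0$ and $\Vnorm{\grad\eta_\delta}_{L^\infty(\Omega)}\le C\delta\to0$ as $\delta\to0$, and that $\eta_\delta(\bx)\to0$ as $\bx\to\p\Omega$ because $\lambda\in C^0(\overline\Omega)$ vanishes on $\p\Omega$ and $q(0)=0$.

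For \eqref{eq:KdeltaError} I would apply Jensen's inequality against the probability measure $\psi(|\bz|)\,\rmd\bz$ and Fubini to get $\Vnorm{u-K_\delta u}_{L^p(\Omega)}^p\le\int_{B(0,1)}\psi(|\bz|)\int_\Omega|u(\bx+\eta_\delta(\bx)\bz)-u(\bx)|^p\,\rmd\bx\,\rmd\bz$, and then compare with $G_{p,\delta}$ directly: the substitution $\by=\bx+\eta_\delta(\bx)\bz$ in \eqref{eq:fxnal} gives
\[
p\,G_{p,\delta}(u)=\int_{B(0,1)}\frac{\rho(|\bz|)}{|\bz|^{\beta}}\int_\Omega\frac{|u(\bx+\eta_\delta(\bx)\bz)-u(\bx)|^p}{\eta_\delta(\bx)^{p}}\,\rmd\bx\,\rmd\bz .
\]
Since $\eta_\delta(\bx)\le\delta q(\kappa_0\diam\Omega)\le C\delta q(\diam\Omega)$, it then suffices to bound $\psi(|\bz|)$ by a constant multiple of $\rho(|\bz|)|\bz|^{-\beta}$ on $\supp\psi$; this is immediate if $\supp\psi\subset[-c_\rho,c_\rho]$ (then $\rho\ge C_\rho$ and $|\bz|^{-\beta}\ge1$), and in general it follows after first rewriting $u(\bx+\eta_\delta(\bx)\bz)-u(\bx)$ as a telescoping sum over a uniformly bounded number of intermediate points $\bx+t_j\eta_\delta(\bx)\bz$, each consecutive displacement being shorter than $c_\rho$ times the local horizon at its base point. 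Combining and invoking \Cref{prop:nonlocalspace}(7) gives $\Vnorm{u-K_\delta u}_{L^p(\Omega)}^p\le C(\delta q(\diam\Omega))^p\,G_{p,\delta}(u)\le C(\delta q(\diam\Omega))^p\,[u]_{\mathfrak{W}^{\beta,p}[\delta;q](\Omega)}^p$.

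The remaining statements I expect to be comparatively routine. For \eqref{eq:ConvergenceOfConv}: Jensen, Fubini and the Jacobian bound for $\Psi_\bz$ give $\Vnorm{K_\delta}_{L^p(\Omega)\to L^p(\Omega)}\le C$ uniformly in $\delta$; one has $K_\delta v\to v$ uniformly for $v\in C(\overline\Omega)$ because $\Vnorm{\eta_\delta}_{L^\infty}\to0$; a density/$3\varepsilon$ argument then closes the gap. For \eqref{eq:Intro:ConvEst:Deriv} (with $\psi\in C^1$), differentiating \eqref{eq:ConvolutionOperator} under the integral and subtracting the vanishing term $u(\bx)\,\grad_\bx\int_\Omega\eta_\delta(\bx)^{-d}\psi(|\by-\bx|/\eta_\delta(\bx))\,\rmd\by=0$ gives $\grad_\bx K_\delta u(\bx)=\int_\Omega\grad_\bx\big[\eta_\delta(\bx)^{-d}\psi(|\by-\bx|/\eta_\delta(\bx))\big](u(\by)-u(\bx))\,\rmd\by$; the bound $|\grad\eta_\delta|\le C\delta$ makes the kernel gradient pointwise bounded by $C\eta_\delta(\bx)^{-d-1}\mathds{1}_{\{|\by-\bx|<\eta_\delta(\bx)\}}$, so Jensen yields $|\grad K_\delta u(\bx)|^p\le C\eta_\delta(\bx)^{-d-p}\int_{B(\bx,\eta_\delta(\bx))}|u(\by)-u(\bx)|^p\,\rmd\by$, and since $\beta\ge0$ forces $|\bx-\by|^{-\beta}\ge\eta_\delta(\bx)^{-\beta}$ on that ball, integration in $\bx$ together with the comparison $\eta_\delta(\bx)\asymp\delta q(\dist(\bx,\p\Omega))$ (from \eqref{assump:Localization}(i), and if needed \Cref{prop:nonlocalspace}(6)) produces $\Vnorm{\grad K_\delta u}_{L^p(\Omega)}^p\le C[u]_{\mathfrak{W}^{\beta,p}[\delta;q](\Omega)}^p$; with $\Vnorm{K_\delta u}_{L^p}\le C\Vnorm{u}_{L^p}$ this is \eqref{eq:Intro:ConvEst:Deriv}. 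Then \eqref{eq:ConvergenceOfConv:W1p} follows by $3\varepsilon$: \eqref{eq:Intro:ConvEst:Deriv} with \Cref{prop:nonlocalspace}(1) bounds $\Vnorm{K_\delta}_{W^{1,p}\to W^{1,p}}$, $C^\infty(\overline\Omega)$ is dense in $W^{1,p}(\Omega)$, and for $v\in C^\infty(\overline\Omega)$ differentiating the representation directly shows $\grad K_\delta v\to\grad v$ uniformly (using $\Vnorm{\eta_\delta}_{L^\infty},\Vnorm{\grad\eta_\delta}_{L^\infty}\to0$ and uniform continuity of $\grad v$). Finally, for $TK_\delta u=Tu$: for $v\in C^\infty(\overline\Omega)$ the representation shows $K_\delta v$ extends continuously to $\overline\Omega$ with boundary values $v|_{\p\Omega}$ (since $\eta_\delta(\bx)\to0$ as $\bx\to\p\Omega$), so the classical trace of the $W^{1,p}$-function $K_\delta v$ — which coincides with its nonlocal trace from \Cref{prop:nonlocalspace}(3) — equals $Tv$; density of $C^\infty(\overline\Omega)$ in $\mathfrak{W}^{\beta,p}[\delta;q](\Omega)$ and boundedness of $T$, of $u\mapsto K_\delta u$ into $W^{1,p}(\Omega)$, and of the classical trace extend the identity to all $u$.

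The hard part will be \eqref{eq:KdeltaError}, and inside it the telescoping step absorbing the possible mismatch between $\supp\psi$ and the region where the interaction kernel of $G_{p,\delta}$ is bounded below: its validity rests on the heterogeneous horizon $\eta_\delta$ varying slowly on scale $\eta_\delta(\bx)$ — a consequence of the derivative bounds in \eqref{assump:Localization}(ii) together with $\delta<\delta_0$ — so that a displacement admissible at $\bx$ stays comparably admissible at each intermediate base point. A secondary delicate point is the bookkeeping in \eqref{eq:Intro:ConvEst:Deriv}: the contributions of $\grad\eta_\delta$ to $\grad_\bx[\eta_\delta(\bx)^{-d}\psi(\cdot)]$ do not spoil the $\eta_\delta(\bx)^{-d-1}$ scaling precisely because $|\grad\eta_\delta|$ is bounded (indeed $O(\delta)$), not merely because the ratio $|\grad\eta_\delta|/\eta_\delta$ is integrable.
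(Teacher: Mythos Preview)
The paper does not prove this proposition; it is quoted verbatim from \cite{scott2023nonlocal,scott2023nonlocal2} without argument, so there is no in-paper proof to compare against. Your proposal is therefore a reconstruction of the cited results, and as such it is essentially correct: the change of variables $\by=\bx+\eta_\delta(\bx)\bz$, the Jensen/Fubini reduction, the differentiation-under-the-integral for \eqref{eq:Intro:ConvEst:Deriv} exploiting $\int_\Omega \eta_\delta(\bx)^{-d}\psi(|\by-\bx|/\eta_\delta(\bx))\,\rmd\by\equiv 1$, and the density arguments for \eqref{eq:ConvergenceOfConv}, \eqref{eq:ConvergenceOfConv:W1p}, and the trace identity are all the right moves and match the strategy in the cited works.

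Two points to tighten. First, the telescoping step you flag is indeed the crux of \eqref{eq:KdeltaError}; to make it rigorous you need that along the segment $t\mapsto\bx+t\eta_\delta(\bx)\bz$ the local horizon $\eta_\delta$ varies by at most a fixed multiplicative factor, which follows from $|\grad\eta_\delta|\le C\delta$ and $\eta_\delta(\bx)\ge c\,\delta\,\dist(\bx,\p\Omega)$ (the lower bound comes from \eqref{assump:Localization}(i) and the specific growth of $q$ near $0$, namely $q(r)\sim r^3/4$). Second, in comparing the integral $\int_\Omega\eta_\delta(\bx)^{-(d+p)}\int_{B(\bx,\eta_\delta(\bx))}|u(\by)-u(\bx)|^p\,\rmd\by\,\rmd\bx$ with the seminorm $[u]_{\mathfrak{W}^{\beta,p}[\delta;q](\Omega)}^p$, note that the latter is defined with $\delta q(\dist(\bx,\p\Omega))$, not $\eta_\delta(\bx)=\delta q(\lambda(\bx))$; the passage between them is exactly what item (7) of \Cref{prop:nonlocalspace} supplies, so your invocation of it is appropriate, but it is doing more work than your parenthetical suggests.
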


		As an application of the operators like $K_\delta$, we can verify \Cref{assumption1} and \Cref{assumption2}.

		\begin{theorem}\label{theorem:verification1and2}
			The family of spaces and functionals defined by \Cref{eq:spacefamily}, \Cref{eq:energy} and \Cref{eq:LocalizedObjectsDef} satisfy  \Cref{assumption1} and \Cref{assumption2}. Additionally, there exists a constant $M_2 > 0$ independent of $\si$ such that
			\begin{equation}\label{eq:EmbeddingSimp}
				\vnorm{u}_{\cT_\si} \leq M_2 \vnorm{u}_{\cT_\infty} \quad \forall u \in \cT_\infty.
			\end{equation}
		\end{theorem}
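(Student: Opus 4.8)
The plan is to verify the three claims --- \Cref{assumption1}, \Cref{assumption2}, and \eqref{eq:EmbeddingSimp} --- one at a time, reducing each to the structural properties collected in \Cref{prop:nonlocalspace} and the convolution estimates of \Cref{theorem:convolution-estimate}, the only real care being that every constant stays uniform in $\delta = \min\{\delta_0,1/\si\}$ (hence in $\si$). I would begin by recording, via the Neumann Poincare inequality \eqref{eq:PoincareNeumann} whose constant $C_N$ is independent of $\delta$, that on $\cT_\si = \mathring{\mathfrak{W}}^{\beta,p}[\delta;q](\Omega)$ the full norm and the seminorm $[\cdot]_{\mathfrak{W}^{\beta,p}[\delta;q](\Omega)}$ are equivalent uniformly in $\delta$ (and likewise on $\cT_\infty = \mathring{W}^{1,p}(\Omega)$), so that one may work with whichever is convenient. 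Then \eqref{eq:EmbeddingSimp} is immediate from \eqref{eq:W1pembed}: for $u \in \cT_\infty$, since $\delta \le \delta_0 < 1$ one has $[u]_{\mathfrak{W}^{\beta,p}[\delta;q](\Omega)} \le (1-\delta)^{-1/p}[u]_{W^{1,p}(\Omega)} \le (1-\delta_0)^{-1/p}[u]_{W^{1,p}(\Omega)}$, which combined with $\Vnorm{u}_{L^p(\Omega)} \le \Vnorm{u}_{\cT_\infty}$ gives $M_2$ depending only on $\delta_0$ and the fixed norm-equivalence constants. \Cref{assumption1}(i) follows trivially with $M_1 = 1$ when $\cT_\si$ carries the full norm (the $L^p$ term is already part of it), or from \eqref{eq:PoincareNeumann} otherwise. \Cref{assumption1}(ii) is precisely \Cref{prop:nonlocalspace}(4): if $\sup_\si \Vnorm{v_\si}_{\cT_\si} \le C$ then $\sup_\si \Vnorm{v_\si}_{L^p(\Omega)} < \infty$ and $\sup_\si [v_\si]_{\mathfrak{W}^{\beta,p}[\delta;q](\Omega)} < \infty$ with $\delta \to 0$ as $\si \to \infty$, so $\{v_\si\}$ is precompact in $L^p(\Omega)$ with every limit point in $W^{1,p}(\Omega)$; since the constraint $(u)_\Omega = 0$ passes to $L^p$-limits, each limit point lies in $\mathring{W}^{1,p}(\Omega) = \cT_\infty$.

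For \Cref{assumption2} I would split $\cE_\si = \cG_\si - \vint{f, K_\delta (\cdot)}$. The linear term: since $f \in \cT_{-\infty} = [W^{1,p}(\Omega)]^*$ and $K_\delta : \cT_\si \to W^{1,p}(\Omega)$ is bounded with $\delta$-independent norm by \eqref{eq:Intro:ConvEst:Deriv} (valid for a kernel $\psi$ satisfying \eqref{Assump:Kernel} with $k_\psi \ge 1$), the map $u \mapsto \vint{f, K_\delta u}$ is a bounded linear functional on $\cT_\si$ with $|\vint{f,K_\delta u}| \le C\Vnorm{f}_{\cT_{-\infty}}\Vnorm{u}_{\cT_\si}$, hence weakly continuous. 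The energy term: by \Cref{prop:nonlocalspace}(7), $(pG_{p,\delta}(\cdot))^{1/p}$ is a seminorm comparable to $[\cdot]_{\mathfrak{W}^{\beta,p}[\delta;q](\Omega)}$ with $\delta$-independent constants, so $\cG_\si = \tfrac{1}{p}\big((pG_{p,\delta})^{1/p}\big)^p$ is convex and finite-valued on $\cT_\si$, and the elementary bound $|a^p - b^p| \le p\max(a,b)^{p-1}|a-b|$ combined with the triangle inequality for that seminorm shows it is strongly continuous; being convex and continuous, $\cG_\si$ is $\cT_\si$-weakly lower semicontinuous, hence so is $\cE_\si$. The same elementary bound yields \Cref{assumption2}(i) with $\varphi(s,t) = C\max(s,t)^{p-1} + C\Vnorm{f}_{\cT_{-\infty}} \in C^0([0,\infty)^2)$, independent of $\si$. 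For \Cref{assumption2}(ii), the lower bound in \eqref{theorem:EnergySpaceIndepOfKernel} gives $\cG_\si(u) \ge c[u]_{\mathfrak{W}^{\beta,p}[\delta;q](\Omega)}^p \ge c'\Vnorm{u}_{\cT_\si}^p$ with $c'$ independent of $\si$, so $\cE_\si(u) \ge c'\Vnorm{u}_{\cT_\si}^p - C\Vnorm{f}_{\cT_{-\infty}}\Vnorm{u}_{\cT_\si}$; this is coercivity with $\alpha = c'/2$ and $\psi(t) = C\Vnorm{f}_{\cT_{-\infty}} t$ (so $\psi(t)/t^p \to 0$ since $p > 1$), and it also shows $\cE_\si \ge -C_f$ uniformly in $\si$, so after the harmless shift $\cE_\si \mapsto \cE_\si + C_f$ the functional takes values in $[0,\infty)$. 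The case $\si = \infty$ is identical, using the classical facts about $\cG_\infty(v) = \tfrac{1}{p}\Vnorm{\grad v}_{L^p(\Omega)}^p$ and pairing $f$ directly with $v \in \mathring{W}^{1,p}(\Omega)$ in place of \eqref{eq:W1pembed}, \eqref{theorem:EnergySpaceIndepOfKernel} and $K_\delta$.

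The point to watch is uniformity, not depth: essentially every nontrivial estimate has already been carried out in \Cref{prop:nonlocalspace} and \Cref{theorem:convolution-estimate}, so the work here is to confirm that the Poincare constant, the constants $c,C$ of \eqref{theorem:EnergySpaceIndepOfKernel}, the embedding factor $(1-\delta)^{-1/p}$, and the operator norm of $K_\delta$ are all controlled uniformly over $\delta \in (0,\delta_0]$, and then to assemble $\varphi$, $\psi$, $M_1$, $M_2$ from them. The one genuinely structural (rather than cosmetic) issue is the linear term $-\vint{f, K_\delta u}$: the smoothing $K_\delta$ is essential so that the pairing with $f \in [W^{1,p}(\Omega)]^*$ makes sense on the nonlocal space $\cT_\si$, and \eqref{eq:Intro:ConvEst:Deriv} --- available only for $\psi$ of regularity $k_\psi \ge 1$ --- is what makes that term not merely continuous but weakly continuous, which in turn upgrades lower semicontinuity of $\cG_\si$ to weak lower semicontinuity of $\cE_\si$.
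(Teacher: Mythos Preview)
Your proposal is correct and follows essentially the same route as the paper: both reduce \Cref{assumption1}(i) to the definition of the norm, \eqref{eq:EmbeddingSimp} to \eqref{eq:W1pembed}, \Cref{assumption1}(ii) to the compactness statement \Cref{prop:nonlocalspace}(4) together with the observation that the mean-zero constraint passes to $L^p$-limits, and \Cref{assumption2} to the combination of \eqref{eq:PoincareNeumann}, \eqref{theorem:EnergySpaceIndepOfKernel}, and \eqref{eq:Intro:ConvEst:Deriv}, arriving at the same choices $\varphi(s,t) \sim 1 + s^{p-1} + t^{p-1}$ and $\psi(t) = C\Vnorm{f}_{[W^{1,p}(\Omega)]^*}\,t$. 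Your write-up is more explicit than the paper's on two points the paper leaves implicit: the weak lower semicontinuity of $\cE_\si$ (which you obtain cleanly from convexity plus strong continuity of $\cG_\si$ and weak continuity of the linear term), and the additive shift needed to make $\cE_\si$ nonnegative as required in the hypotheses of \Cref{assumption2}.
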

		
		\begin{proof}
			\Cref{assumption1}(i) follows from the definition of the spaces, and \eqref{eq:EmbeddingSimp} follows from \Cref{eq:W1pembed}.  \Cref{assumption1}(ii) can be shown similarly to item (4) of
			\Cref{prop:nonlocalspace}. The idea is to use the boundary-localized convolution operator $K_{\delta}u$ given by
			\Cref{eq:ConvolutionOperator}, as in \cite{scott2023nonlocal}. Noticing that $K_{\epsilon}u_\delta\in W^{1,p}(\Omega)$ is uniformly bounded in $W^{1,p}(\Omega)$ with respect to $\delta$ and $\epsilon$ as $\delta\to 0$ and $\epsilon\to 0$, we obtain that the limit point $u$ belongs to $W^{1,p}(\Omega)$ with the bound
			\[ \Vnorm{u}_{L^{p}(\Omega)}\leq    \sup_{\delta > 0}
			\Vnorm{u_\delta}_{L^p(\Omega)}, \; \text{ and  }\; \Vnorm{\grad u}_{L^{p}(\Omega)} \leq
			\sup_{\delta > 0} [u_\delta]_{\mathfrak{W}^{\beta,p}[\delta;q](\Omega)} .\]
			Since $u_\delta \to u$ in $L^p(\Omega)$ and $(u_\delta)_{\Omega} = 0$ for all $\delta$, we get $(u)_\Omega = 0$.
			Finally, \Cref{assumption2} (with $\varphi(s,t) = C(1 + s^{p-1} + t^{p-1})$ for a constant $C$ independent of $\si$, and $\psi(t) = C\vnorm{f}_{[W^{1,p}(\Omega)]^*} t$), follows from \eqref{eq:PoincareNeumann}, (7) of \Cref{prop:nonlocalspace}, and \eqref{eq:Intro:ConvEst:Deriv}.
		\end{proof}

		\subsection{Gamma Convergence of nonlocal functionals to the local limit}
		
		To verify the properties of the functionals $\cE_\si$ defined by  \Cref{eq:fxnal}, we recall some results of \cite{scott2023nonlocal,scott2023nonlocal2}.

		
		\begin{theorem}[\cite{scott2023nonlocal}]\label{thm:GammaConvergence:Example}
			Let $f \in [W^{1,p}(\Omega)]^*$ with $\vint{f,1}=0$, and let $G_{p,\delta}$ be as in \eqref{eq:fxnal:delta}. 
			Then the functionals
			$\overline{E}_{p,\delta}$ $\Gamma$-converge in the $L^p(\Omega)$-strong topology to $\overline{E}_{p,0}$, where $\overline{E}_{p,\delta}$ and $\overline{E}_{p,0}$ 
			for $\delta \in [0,\delta_0]$ are the extensions defined by
			\begin{equation*}
				\begin{split}
					\overline{E}_{p,\delta}(u) &= 
					\begin{cases}
						G_{p,\delta}(u) - \vint{f,K_\delta u} &\text{ for } u \in \mathring{\mathfrak{W}}^{\beta,p}[\delta;q](\Omega)\,, \\
						+\infty &\text{ for } u \in L^p(\Omega) \setminus \mathring{\mathfrak{W}}^{\beta,p}[\delta;q](\Omega)\,,
					\end{cases}
					\quad \text{ and } \quad \\
					\overline{E}_{p,0}(u) &= 
					\begin{cases}
						\frac{1}{p} \int_{\Omega} |\grad u(\bx)|^p \, \rmd \bx - \vint{f,u} &\text{ for } u \in \mathring{W}^{1,p}(\Omega)\,, \\
						+\infty &\text{ for } u \in L^p(\Omega) \setminus \mathring{W}^{1,p}(\Omega)\,.
					\end{cases}
				\end{split}
			\end{equation*}
        Moreover, $\overline{E}_{p,\delta}(u) \to \overline{E}_{p,0}(u)$ for all $u \in L^p(\Omega)$.
		\end{theorem}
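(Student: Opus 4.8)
The plan is to verify the two defining properties of $\Gamma$-convergence in the $L^p(\Omega)$-strong topology, treating the nonlocal energy $G_{p,\delta}$ and the linear perturbation $u \mapsto -\vint{f,K_\delta u}$ separately, since the latter turns out to be a continuous perturbation in the relevant topology. Indeed, along any sequence $u_\delta \to u$ in $L^p(\Omega)$ with $\sup_\delta G_{p,\delta}(u_\delta) < \infty$, item (7) of \Cref{prop:nonlocalspace} bounds the seminorms $[u_\delta]_{\mathfrak{W}^{\beta,p}[\delta;q](\Omega)}$ uniformly, so \eqref{eq:Intro:ConvEst:Deriv} shows $\{K_\delta u_\delta\}$ is bounded in $W^{1,p}(\Omega)$ while \eqref{eq:KdeltaError} gives $\Vnorm{u_\delta - K_\delta u_\delta}_{L^p(\Omega)} \to 0$; hence $K_\delta u_\delta \rightharpoonup u$ weakly in $W^{1,p}(\Omega)$ and $\vint{f,K_\delta u_\delta} \to \vint{f,u}$ for every $f \in [W^{1,p}(\Omega)]^*$. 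Thus it suffices to establish the $\Gamma$-liminf and $\Gamma$-limsup inequalities for $G_{p,\delta}$ alone, together with the pointwise convergence $G_{p,\delta}(u) \to \tfrac1p\int_\Omega |\grad u|^p$ for $u \in W^{1,p}(\Omega)$ (which is also the ``Moreover'' assertion).

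\textbf{Recovery sequence.} Given $u \in \mathring{W}^{1,p}(\Omega)$ I would take the constant sequence $u_\delta \equiv u$. The embedding \eqref{eq:W1pembed} ensures $u \in \mathring{\mathfrak{W}}^{\beta,p}[\delta;q](\Omega)$ for every $\delta \in (0,\delta_0)$, so $\overline{E}_{p,\delta}(u) = G_{p,\delta}(u) - \vint{f,K_\delta u}$, and \eqref{eq:ConvergenceOfConv:W1p} handles the linear term. The limsup inequality then reduces to the pointwise energy convergence $G_{p,\delta}(u) \to \tfrac1p\int_\Omega|\grad u|^p$, which I would prove first for $u \in C^\infty(\overline\Omega)$ by freezing $\bx$, changing variables $\by = \bx + \eta_\delta(\bx)\bz$, expanding $|u(\bx) - u(\by)|^p \sim \eta_\delta(\bx)^p |\grad u(\bx)\cdot \bz|^p$, and using the kernel normalization \eqref{eq:Intro:StdKernelNormalization} together with \eqref{assump:VarProb:Kernel}, with a dominated-convergence argument in which \eqref{assump:Localization} and the threshold \eqref{eq:HorizonThreshold2} guarantee $\bx + \eta_\delta(\bx)\bz \in \Omega$ and $\eta_\delta$ small uniformly on $\overline\Omega$; the extension to general $u \in W^{1,p}(\Omega)$ follows from density of smooth functions (item (2) of \Cref{prop:nonlocalspace}), the uniform two-sided equivalence \eqref{theorem:EnergySpaceIndepOfKernel}, and \eqref{eq:W1pembed}.

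\textbf{Liminf inequality.} Suppose $u_\delta \to u$ in $L^p(\Omega)$ with $\ell := \liminf_{\delta\to 0}\overline{E}_{p,\delta}(u_\delta) < \infty$; pass to a subsequence realizing $\ell$, along which $u_\delta \in \mathring{\mathfrak{W}}^{\beta,p}[\delta;q](\Omega)$ and, by the linear-term analysis, $\liminf_{\delta\to 0} G_{p,\delta}(u_\delta) < \infty$. Item (7) of \Cref{prop:nonlocalspace} then bounds $[u_\delta]_{\mathfrak{W}^{\beta,p}[\delta;q](\Omega)}$ uniformly, so the compactness statement item (4) yields $u \in W^{1,p}(\Omega)$, with $(u)_\Omega = 0$ preserved under $L^p$ convergence. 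The crux is the sharp bound $\tfrac1p\int_\Omega |\grad u|^p \le \liminf_{\delta\to 0} G_{p,\delta}(u_\delta)$. I would localize: fix $\epsilon > 0$, restrict the outer integral to $\Omega_\epsilon := \{\bx : \dist(\bx,\p\Omega) > \epsilon\}$ where \eqref{assump:Localization} makes $\eta_\delta(\bx) = \delta\, q(\lambda(\bx))$ comparable, uniformly in $\bx$, to $\delta$ times a fixed constant; after rescaling, the integrand becomes a Bourgain--Br\'ezis--Mironescu-type difference-quotient functional with horizon $\sim\delta$, for which a standard convexity/Jensen lower-semicontinuity argument (testing against smooth vector fields, or radial slicing) produces $\tfrac1p\int_{\Omega_\epsilon}|\grad u|^p$ in the limit; the discarded boundary-layer term is nonnegative and hence only helps, and letting $\epsilon \to 0$ by monotone convergence completes the estimate. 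Combining with $\vint{f,K_\delta u_\delta} \to \vint{f,u}$ gives $\overline{E}_{p,0}(u) \le \ell$.

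\textbf{Main obstacle.} I expect the sharp $\Gamma$-liminf near $\p\Omega$ to be the hard part: the kernel $\gamma_{\beta,p}[\delta;q](\bx,\by)$ carries both the genuine singularity $|\bx-\by|^{-\beta}$ and the factor $\eta_\delta(\bx)^{-(d+p-\beta)}$, which blows up as $\dist(\bx,\p\Omega)\to 0$, so the naive localization degenerates precisely where the regularity of $u$ is least controlled. Making the boundary-layer contribution rigorously harmless --- and ensuring the change-of-variables and dominated-convergence steps are uniform --- requires the fine quantitative properties of $q$ and $\lambda$ in \eqref{assump:Localization}, the horizon threshold \eqref{eq:HorizonThreshold2}, and the trace and stability estimates of \Cref{prop:nonlocalspace} and \Cref{theorem:convolution-estimate}; these are exactly the technical inputs furnished by \cite{scott2023nonlocal,scott2023nonlocal2}, which I would invoke for this step.
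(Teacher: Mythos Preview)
The paper does not supply a proof of this theorem: it is stated with the attribution \cite{scott2023nonlocal} and simply quoted as a known result, so there is no in-paper argument to compare your proposal against. Your sketch is a reasonable outline of the standard route to such a $\Gamma$-convergence statement---separating the continuously-perturbing linear term (using \eqref{eq:KdeltaError} and \eqref{eq:Intro:ConvEst:Deriv} to get weak $W^{1,p}$ convergence of $K_\delta u_\delta$), taking the constant recovery sequence via \eqref{eq:W1pembed} and the BBM-type pointwise limit, and localizing away from $\p\Omega$ for the liminf---and you correctly flag that the sharp boundary-layer analysis is the delicate point handled by the machinery of \cite{scott2023nonlocal,scott2023nonlocal2}.
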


		\begin{corollary}\label{coro:verification3}
	       For $\sigma \in (0,\infty]$, the functionals $\cE_\sigma(u)$ defined in \eqref{eq:energy} and \eqref{eq:energy:local} satisfy \Cref{assumption3}.
		\end{corollary}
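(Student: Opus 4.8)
The plan is to deduce the statement directly from \Cref{thm:GammaConvergence:Example} by a matching of definitions followed by a trivial reparametrization. First I would record the identification: for $\si \in (0,\infty)$, writing $\delta = \delta(\si) := \min\{\delta_0, 1/\si\}$, the prescriptions in \Cref{definitionspace} together with \eqref{eq:energy} give $\cT_0 = L^p(\Omega)$, $\cT_\si = \mathring{\mathfrak{W}}^{\beta,p}[\delta;q](\Omega)$, and $\cE_\si(u) = \cG_\si(u) - \vint{f, K_\delta u} = G_{p,\delta}(u) - \vint{f, K_\delta u}$ on $\cT_\si$. Consequently the extension $\overline{\cE}_\si$ appearing in \Cref{assumption3} is \emph{precisely} the functional $\overline{E}_{p,\delta}$ of \Cref{thm:GammaConvergence:Example}: both are $G_{p,\delta}(\cdot)-\vint{f,K_\delta\cdot}$ on $\mathring{\mathfrak{W}}^{\beta,p}[\delta;q](\Omega)$ and $+\infty$ on the rest of $L^p(\Omega)$. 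The pairing is well-defined because $f \in \cT_{-\infty} \subset [W^{1,p}(\Omega)]^*$ and $K_\delta u \in W^{1,p}(\Omega)$ by \eqref{eq:Intro:ConvEst:Deriv}, and the zero-mean (Neumann) constraint built into $\mathring{\mathfrak{W}}^{\beta,p}$ is the same one built into $\mathring{W}^{1,p}$. Likewise $\overline{\cE}_\infty = \overline{E}_{p,0}$, using \eqref{eq:energy:local} and $\cT_\infty = \mathring{W}^{1,p}(\Omega)$.

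The only remaining point is to pass from the limit $\delta \to 0$ in \Cref{thm:GammaConvergence:Example} to the limit $\si \to \infty$ in \Cref{assumption3}. For $\si > 1/\delta_0$ one has $\delta(\si) = 1/\si$, so $\si \to \infty$ is equivalent to $\delta(\si) \to 0^+$; for $\si \le 1/\delta_0$ the functional $\overline{\cE}_\si$ is independent of $\si$, so the asymptotics are governed entirely by the tail $\si > 1/\delta_0$. I would then invoke the elementary equivalence between $\Gamma$-convergence along a continuous parameter and $\Gamma$-convergence along every sequence converging to the limit: given an arbitrary $\si_n \to \infty$, set $\delta_n := \min\{\delta_0, 1/\si_n\} \to 0$. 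The liminf inequality for $\{\overline{\cE}_{\si_n}\}$ in the $\cT_0$-strong (that is, $L^p(\Omega)$-strong) topology is verbatim the liminf inequality for $\{\overline{E}_{p,\delta_n}\}$ furnished by \Cref{thm:GammaConvergence:Example}, and for any $v \in L^p(\Omega)$ a recovery sequence $\{v_{\si_n}\}$ for $\overline{\cE}_{\si_n}$ is obtained by taking, for each $n$, the recovery element at level $\delta_n$ supplied by that theorem, so that $v_{\si_n} \to v$ in $L^p(\Omega)$ and $\limsup_n \overline{\cE}_{\si_n}(v_{\si_n}) \le \overline{\cE}_\infty(v)$. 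This is exactly the content of \Cref{assumption3}.

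I do not expect a genuine obstacle: the analytic substance resides entirely in \Cref{thm:GammaConvergence:Example}, and what is left is bookkeeping. The two places deserving a sentence of care are the on-the-nose matching of the constrained domains and the functional formulas in the two formulations, and the routine reparametrization $\delta = \min\{\delta_0, 1/\si\}$ that links the two limiting processes; both are straightforward.
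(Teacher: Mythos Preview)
Your proposal is correct and matches the paper's own reasoning: the corollary has no separate proof in the paper precisely because it is an immediate consequence of \Cref{thm:GammaConvergence:Example} after identifying $\overline{\cE}_\si$ with $\overline{E}_{p,\delta(\si)}$ and $\overline{\cE}_\infty$ with $\overline{E}_{p,0}$ under the reparametrization $\delta = \min\{\delta_0,1/\si\}$. The bookkeeping you spell out (matching the constrained domains, the functional formulas, and the $\si\to\infty \leftrightarrow \delta\to 0$ translation) is exactly what the paper leaves implicit.
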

        \begin{remark}\label{rmk:GammaConvergence:Dirichlet}
            \Cref{thm:GammaConvergence:Example} and its corollary also hold for the Dirichlet problem (see \Cref{rmk:Dirichlet}), in which all the same definitions and assumptions are used except $\mathring{\mathfrak{W}}^{\beta,p}[\delta;q](\Omega)$ and $\mathring{W}^{1,p}(\Omega)$ are replaced with $\mathfrak{W}^{\beta,p}_0[\delta;q](\Omega)$ and $W^{1,p}_0(\Omega)$, respectively.
        \end{remark}

		Given the verification of \Cref{assumption1}, \Cref{assumption2}, and \Cref{assumption3}, by \cite{scott2023nonlocal}, it is immediate to get the existence and uniqueness of minimizers to \eqref{eq:minprob} 
		with $\cE_\si$ given by \eqref{eq:energy}.
		Moreover,  minimizers $u_\si$  ($\si>0$) to the above problems converge as $\si\to \infty$, or equivalently $\delta \to 0$, in $L^p(\Omega)$, to a minimizer $u_0 \in \mathring{W}^{1,p}(\Omega)$ of the local problem. The same conclusion has been established in \cite{scott2023nonlocal} for more general nonlinear problems. 
        For completeness, we quote below the result 
        in the special case of the example treated in this work, which has linear Poisson data.
		

		
		\begin{proposition}[\cite{scott2023nonlocal}]\label{theorem:WellPosedness:Neumann}
			Given $f \in \cT_{-\infty}$, i.e. $f \in [W^{1,p}(\Omega)]^*$ with $\vint{f,1} = 0$, there exists a unique minimizer $u_\si\in  \cT_\si=\mathring{\mathfrak{W}}^{\beta,p}[\delta;q](\Omega)$ of \eqref{eq:minprob} with $\cE_\si$ given by \eqref{eq:energy}
			that satisfies 
			\begin{equation}\label{eq:EnergyEstimate:Neumann}
				\Vnorm{u_\si}_{\mathfrak{W}^{\beta,p}[\delta;q](\Omega)}^{p-1}
				\leq C \vnorm{f}_{[W^{1,p}(\Omega)]^\ast} \,,
			\end{equation}
			for a constant $C=C(d,p,\beta,q,\lambda,\Omega,\psi)$.
			Moreover, 
            for a sequence $\si\to \infty$ (i.e. $\delta \to 0$), the sequence 
			$\{u_\si\}_\si$ is precompact in the strong topology on $L^p(\Omega)$. Furthermore, any limit point $u_\infty$ satisfies $u_\infty \in \cT_\infty=\mathring{W}^{1,p}(\Omega)$
			and 
			\begin{equation}\label{eq:Weak:Neumann:Local}
				\cE_{\infty}(u_\infty) = \min_{v \in \cT_\infty} \cE_\infty(v) = \min_{v \in \mathring{W}^{1,p}(\Omega)} \cE_\infty(v)\,.
				\tag{\ensuremath{\rmM_0}}
			\end{equation}
		\end{proposition}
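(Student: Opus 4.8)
The plan is to obtain the four assertions — existence, uniqueness, the $\delta$-uniform energy estimate, and the characterization of the localized limit — by combining the abstract machinery of \Cref{sec:ac} with the quoted structural properties of the boundary-localized spaces. For the first two I would first record (as in \Cref{theorem:verification1and2} and \Cref{coro:verification3}) that $(\cT_\si,\cE_\si)$ satisfies \Cref{assumption1}, \Cref{assumption2} and \Cref{assumption3}; existence of $u_\si$ for $\si \in (0,\infty)$ is then immediate from \Cref{theorem:convergence1}, or directly from the direct method — coercivity from the Poincar\'e inequality \eqref{eq:PoincareNeumann} and the norm equivalence \eqref{theorem:EnergySpaceIndepOfKernel}, weak lower semicontinuity from the convexity of $\cG_\si$. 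Uniqueness I would get from the strict convexity of $t \mapsto |t|^p$ for $p > 1$: two minimizers must differ by a constant, and the mean-zero constraint defining $\mathring{\mathfrak{W}}^{\beta,p}[\delta;q](\Omega)$ then forces them to coincide.

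For the energy estimate \eqref{eq:EnergyEstimate:Neumann}, I would test the minimality of $u_\si$ against the admissible competitor $v \equiv 0$. Since $\cG_\si(0) = 0$ and $K_\delta 0 = 0$, this gives $\cG_\si(u_\si) \leq \vint{f, K_\delta u_\si} \leq \vnorm{f}_{[W^{1,p}(\Omega)]^\ast} \Vnorm{K_\delta u_\si}_{W^{1,p}(\Omega)}$. Bounding the last factor by $C\Vnorm{u_\si}_{\mathfrak{W}^{\beta,p}[\delta;q](\Omega)}$ via \eqref{eq:Intro:ConvEst:Deriv}, and bounding $\cG_\si(u_\si) = G_{p,\delta}(u_\si)$ from below by $c\Vnorm{u_\si}_{\mathfrak{W}^{\beta,p}[\delta;q](\Omega)}^p$ via \eqref{theorem:EnergySpaceIndepOfKernel} and \eqref{eq:PoincareNeumann}, then combining and dividing by $\Vnorm{u_\si}_{\mathfrak{W}^{\beta,p}[\delta;q](\Omega)}$, yields \eqref{eq:EnergyEstimate:Neumann}. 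The point to be careful about is that the Poincar\'e constant $C_N$, the equivalence constants in \eqref{theorem:EnergySpaceIndepOfKernel}, and the constant in \eqref{eq:Intro:ConvEst:Deriv} are all independent of $\delta < \delta_0$, hence of $\si$ — only then is $C$ in \eqref{eq:EnergyEstimate:Neumann} truly $\si$-independent.

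The remaining assertions follow by compactness and $\Gamma$-convergence. The $\si$-uniform bound \eqref{eq:EnergyEstimate:Neumann} is exactly the hypothesis of item (4) of \Cref{prop:nonlocalspace} (equivalently \Cref{assumption1}(ii)), which gives precompactness of $\{u_\si\}$ in $L^p(\Omega)$ and membership of each limit point $u_\infty$ in $W^{1,p}(\Omega)$ with the stated bounds; since $(u_\si)_\Omega = 0$ and $u_\si \to u_\infty$ in $L^p(\Omega)$ we also get $(u_\infty)_\Omega = 0$, so $u_\infty \in \mathring{W}^{1,p}(\Omega) = \cT_\infty$. For \eqref{eq:Weak:Neumann:Local} I would use \Cref{thm:GammaConvergence:Example}: the liminf inequality gives $\cE_\infty(u_\infty) \leq \liminf_{\si\to\infty}\cE_\si(u_\si)$, while for any $v \in \mathring{W}^{1,p}(\Omega)$ — which lies in $\mathring{\mathfrak{W}}^{\beta,p}[\delta;q](\Omega)$ by \eqref{eq:W1pembed} and is therefore admissible for every $\si$ — the pointwise convergence $\cE_\si(v) \to \cE_\infty(v)$ together with $\cE_\si(u_\si) \leq \cE_\si(v)$ gives $\cE_\infty(u_\infty) \leq \cE_\infty(v)$. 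This is precisely \Cref{theorem:convergence1} specialized to the present example.

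I expect the only genuine obstacle to be the $\delta$-uniformity of the constants in the a priori estimate — keeping the Poincar\'e and norm-equivalence constants controlled as the kernel \eqref{eq:Intro:kernelgamma} degenerates near $\p \Omega$, and controlling $K_\delta$ on $\mathfrak{W}^{\beta,p}[\delta;q](\Omega)$ uniformly so that the $W^{1,p}$-dual datum $f$ can be paired with $K_\delta u_\si$. This is exactly the content extracted from \cite{scott2023nonlocal,scott2023nonlocal2} and recorded here in \Cref{prop:nonlocalspace} and \Cref{theorem:convolution-estimate}; granting those, the rest of the argument is a routine assembly.
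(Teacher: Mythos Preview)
Your proposal is correct. The paper does not supply its own proof of this proposition: it is quoted from \cite{scott2023nonlocal}, with the surrounding text noting only that existence, uniqueness, and convergence follow once \Cref{assumption1}--\Cref{assumption3} have been verified (as in \Cref{theorem:verification1and2} and \Cref{coro:verification3}) and \Cref{theorem:convergence1} is applied. Your sketch spells out precisely this route and additionally supplies the standard derivation of the a priori bound \eqref{eq:EnergyEstimate:Neumann} by testing against $v=0$ and invoking \eqref{theorem:EnergySpaceIndepOfKernel}, \eqref{eq:PoincareNeumann}, and \eqref{eq:Intro:ConvEst:Deriv}; this is the argument one would expect and is consistent with the paper's framing.
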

		
        \begin{remark}
            \Cref{theorem:WellPosedness:Neumann} also holds for the Dirichlet problem associated with the nonlocal and local energy functionals (see \Cref{rmk:GammaConvergence:Dirichlet}) but without the additional compatibility constraint on $f \in W^{-1,p'}(\Omega)$.
        \end{remark}

		\subsection{Galerkin finite element discretization of the nonlocal minimization problem and the AC property}
		
		For convenience of introducing the finite element discretization, we assume that $\Omega$ has a piecewise flat boundary
		with a quasi-uniform and shape-regular triangulation $\tau_h=\{\tau\}$ parametrized by the mesh parameter $h$ which measures 
		the largest size of any element $\tau\in \tau_h$. 
		
		We set
		\[
		W_{\si, h} := \{ v \in \cT_\si : v|_\tau \in P(\tau), \quad \forall \tau \in \tau_h, (v)_{ \Omega}=0\},
		\]
		where $P(\tau)=\mathcal P_N(\tau)$ is the space of polynomials on $\tau\in\tau_h$
		of degree less than or equal to $N$ for some  $N \in \mathbb{N}$. Note that, as $\tau_h$  and $P$ are independent of $\si$, and $\cT_\infty\subset \cT_\si$ for all $\si>0$, we obviously have $Z_h<\infty$ and \Cref{assumption4}(ii) is satisfied. Still, the different regularity requirements might make $W_{\infty,h}$ a proper subset of $W_{\si,h}$, such as in the case of $\beta<d$. For the latter, it is possible to have  $W_{\si,h}$ containing discontinuous piecewise polynomials but $W_{\infty,h}$ only contain elements of $W_{\si,h}$ that are continuous over $\overline{\Omega}$.
		For more discussions on finite element approximations, we refer to \cite{brenner2008finite,ciarlet2002finite,ern2004theory}.
		The Galerkin approximation is exactly the approximation \eqref{eq:minprobnum}.

        In the case under consideration here, \Cref{assumption4'}(i) holds thanks to \eqref{eq:EmbeddingSimp}. We thus proceed to  verify \Cref{assumption4'} instead of \Cref{assumption4}, in light of \Cref{rmk:assumption4'}. 
        
        First, by definition of $W_{\si,h}$ above and by \Cref{prop:nonlocalspace}(6), we see that \Cref{assumption4'}(ii) holds. Next,
given any $v\in\cT_\si$ and	any $\epsilon>0$, 
		since $\cT_\infty$ (or $\cT_* := C^2(\overline{\Omega})$) is dense in $\cT_\si$ with respect to the strong topology of the latter, we can find $w\in\cT_\infty$ such that
   $\|w-v\|_{\cT_\si} \leq \epsilon/2$. 
  By classical approximation theory, we can approximate any function $w\in\cT_\infty$ 
		using elements of $W_{\si,h}$ as $h\to 0$, that is,  there is some $v_h$ for a small enough $h$ satisfying $\|w-v_h\|_{\cT_\infty}\leq \epsilon/(2M_2)$, which also implies $\|w-v_h\|_{\cT_\si} \leq M_2 \|w-v_h\|_{\cT_\infty}
		\leq  \epsilon/2$. 
   Thus, we have
  $\|v-v_h\|_{\cT_\si} \leq \epsilon$. This means that
	there exists a sequence $\{v_h\in W_{\si, h}\}$ such that
		\beq \label{FEsq}
		\| v_h-v\|_{\cT_\si}\to0\quad\text{as}\quad h\to0.
		\eeq
		This gives \Cref{assumption4'}(iii). 
        \Cref{assumption4'}(iv) follows from \Cref{thm:GammaConvergence:Example}. Finally, to check \Cref{assumption4'}(v),
		for convenience, we  define a special family of spaces $\widehat W_{\si,h}$.
		
		\begin{defn} \label{defn:FEM}
			Let $\widehat W_{\infty, h} \subset W_{\infty,h}
			\subset \cT_\infty=\mathring{W}^{1,p}(\Omega)$
			be the continuous piecewise linear finite element space that corresponds to the same mesh $\tau_h$ in the definition of $W_{\si, h}$.
		\end{defn}
		
		As in \cite{tian2020sirv}, it is a fact that the continuous piecewise linear subspace of $\mathring{W}^{1,p}(\Omega)$ approximates the whole space
		as the mesh size goes to zero.  Then by \eqref{eq:W1pembed} and the density of smooth functions given in 2) of \Cref{prop:nonlocalspace}, we obtain 
        that the family
			$\widehat W_{\infty, h}$ is dense in $\cT_\si$ for any $\si\in (0, \infty]$, and so is 
			$W_{\si, h}$ if $\widehat W_{\infty, h}\subset W_{\si, h}$.
        Then since the continuous piecewise linear subspace of $\mathring{W}^{1,p}(\Omega)$ approximates the whole space as the mesh size goes to zero, by \eqref{eq:W1pembed} there exists a sequence $\{v_n \in W_{\si_n,h_n}\}$ such that $\vnorm{v_n - v}_{\cT_\infty} \to 0$ as $n \to \infty$, i.e. \Cref{assumption4'}(v) is satisfied.

		
		
		With all conditions of
		\Cref{assumption4'} verified, 
and the verification of
\Cref{assumption1}, \Cref{assumption2}, 
		\Cref{assumption3} already  given in 
   \Cref{theorem:verification1and2} and \Cref{coro:verification3},
		we get the  following theorem on asymptotically compatible schemes for nonlocal minimization problems involving boundary-localization.

		\begin{theorem} \label{theorem:conv1}
			Let $u_{\si, h} \in W_{\si,h}$ be a minimizer of \eqref{eq:minprobnum}, with $\cE_\si$ given by \eqref{eq:energy}, $\cT_\si$ given by \Cref{definitionspace}, and
			$\widehat W_{\infty, h}$ defined in \Cref{defn:FEM}.
			If $\widehat W_{\infty,h}\subset W_{\si, h}$, then $\| u_{\si, h}-u_\infty \|_{L^p(\Om)}\to0$ as $\si
			\to \infty$ and $h\to0$, where  $u_\infty$ is the solution of \eqref{eq:Weak:Neumann:Local} with 
   $\cE_\infty$ given by \eqref{eq:energy:local}.
		\end{theorem}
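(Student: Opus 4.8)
The plan is to invoke \Cref{prop:convergence}, whose hypotheses are precisely \Cref{assumption1}, \Cref{assumption2}, \Cref{assumption3}, and \Cref{assumption4}, together with convexity of $\cE_\si$. The verification of these was spread over the preceding subsections: \Cref{assumption1} and \Cref{assumption2} are \Cref{theorem:verification1and2}; \Cref{assumption3} is \Cref{coro:verification3}; and \Cref{assumption4} follows from \Cref{assumption4'} via the implication lemma, with \Cref{assumption4'} itself checked item-by-item in the discussion immediately preceding the theorem (part (i) from \eqref{eq:EmbeddingSimp}, part (ii) from \Cref{prop:nonlocalspace}(6), part (iii) from the density argument producing \eqref{FEsq}, part (iv) from \Cref{thm:GammaConvergence:Example}, and part (v) from the density of continuous piecewise linear functions in $\mathring W^{1,p}(\Omega)$ combined with \eqref{eq:W1pembed}). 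Convexity of $\cE_\si$ is clear since $\cG_\si$ is convex (its integrand is $|u(\bx)-u(\by)|^p$ up to a nonnegative weight, and $t \mapsto t^p$ is convex) and $u \mapsto \vint{f, K_\delta u}$ is linear; hence the minimizers $u_\si$ and $u_{\si,h}$ are unique.

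The substantive hypothesis to double-check in the proof is the inclusion $\widehat W_{\infty,h} \subset W_{\si,h}$, which is assumed in the statement and is exactly what makes $W_{\si,h}$ asymptotically dense: since $\widehat W_{\infty,h}$ is dense in $\cT_\infty = \mathring W^{1,p}(\Omega)$ as $h \to 0$, and by \eqref{eq:W1pembed} the $\cT_\si$-norm is controlled by the $\cT_\infty$-norm uniformly in $\si$, any $v \in \cT_\infty$ admits a sequence $\{v_n \in \widehat W_{\infty,h_n} \subset W_{\si_n,h_n}\}$ with $\vnorm{v_n - v}_{\cT_\infty} \to 0$, hence $\vnorm{v_n - v}_{\cT_\si} \to 0$. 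This is the step that genuinely uses the inclusion hypothesis. Given all of this, \Cref{prop:convergence}(3) yields $\vnorm{u_{\si_n,h_n} - u_\infty}_{\cT_0} \to 0$ for any $\si_n \to \infty$, $h_n \to 0$, and since $\cT_0 = L^p(\Omega)$ this is precisely $\vnorm{u_{\si,h} - u_\infty}_{L^p(\Omega)} \to 0$, with $u_\infty$ the unique minimizer of $\cE_\infty$ over $\mathring W^{1,p}(\Omega)$, i.e. the solution of \eqref{eq:Weak:Neumann:Local}. The identification of $u_\infty$ as the solution of \eqref{eq:Weak:Neumann:Local} is part of the conclusion of \Cref{theorem:convergence1} / \Cref{theorem:WellPosedness:Neumann}.

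I do not anticipate a serious obstacle here: the theorem is essentially a corollary, and the real work was done in establishing the abstract machinery of \Cref{sec:ac} and verifying its hypotheses in the present setting. The one point requiring a little care in the write-up is making explicit that the abstract $\cT_0$-convergence \eqref{eq:gammaconvergence4} specializes to $L^p(\Omega)$-convergence and that the limit is the Neumann minimizer rather than merely "a" minimizer — but uniqueness (from convexity) makes this automatic. So the proof is short: cite the verifications, note convexity, record the role of the hypothesis $\widehat W_{\infty,h} \subset W_{\si,h}$ in \Cref{assumption4'}(v), and apply \Cref{prop:convergence}(3).
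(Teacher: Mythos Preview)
Your proposal is correct and matches the paper's approach exactly: the theorem is stated as an immediate consequence of the verifications of \Cref{assumption1}--\Cref{assumption3} (via \Cref{theorem:verification1and2} and \Cref{coro:verification3}) and of \Cref{assumption4'} (hence \Cref{assumption4}) carried out in the preceding discussion, after which \Cref{prop:convergence}(3) applies. Your explicit identification of the role of the inclusion $\widehat W_{\infty,h}\subset W_{\si,h}$ in securing \Cref{assumption4'}(v), and of convexity for uniqueness, makes the logic slightly more transparent than the paper's terse summary but is otherwise the same argument.
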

		\begin{theorem}\label{theorem:conv1an}
			Take all the assumptions of the previous theorem. Let $u_{\si, h}$ and $u_{\infty,h}$ be discrete minimizers satisfying \eqref{eq:minprobnum}.
			Then, for fixed $h$,
			we have $\|u_{\si, h}-u_{\infty,h} \|_{L^p(\Omega)} \to 0$ as $\si \to \infty$.
		\end{theorem}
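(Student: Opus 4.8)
The plan is to deduce the statement directly from \Cref{prop:convergence}(2), since all of its hypotheses have now been verified in the present concrete setting. First I would recall the bookkeeping: \Cref{theorem:verification1and2} establishes \Cref{assumption1} and \Cref{assumption2} for the spaces $\{\cT_\si\}$ of \Cref{definitionspace} and the energies $\cE_\si$ of \eqref{eq:energy}; \Cref{coro:verification3} establishes \Cref{assumption3}; and the discussion preceding \Cref{theorem:conv1} verifies \Cref{assumption4'}, hence \Cref{assumption4}. The only place the extra hypothesis $\widehat W_{\infty,h}\subset W_{\si,h}$ (inherited from \Cref{theorem:conv1}) is used is to guarantee the asymptotic density \Cref{assumption4'}(v); however, for the fixed-$h$ regime relevant here one needs only the case $h>0$ of \Cref{assumption4}(iii), which follows from \Cref{assumption4'}(iv) (equivalently \Cref{thm:GammaConvergence:Example}) through the constant recovery sequence, with no density as $h\to 0$ required.

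Next I would observe that the energy $\cE_\si = \cG_\si - \vint{f,K_\delta u}$ is strictly convex on $\cT_\si$ for $\si\in(0,\infty]$: the affine term is linear in $u$ since $K_\delta$ is linear, while $\cG_\si(u) = \tfrac1p\iint \gamma\, |u(\bx)-u(\by)|^p$ (and $\cG_\infty(u) = \tfrac1p\int|\grad u|^p$) is the composition of the linear map $u\mapsto (u(\bx)-u(\by))$ (resp. $u\mapsto \grad u$), which is injective on mean-zero functions, with the strictly convex functional $\tfrac1p\Vnorm{\cdot}^p$ on the relevant weighted $L^p$ space for $p\in(1,\infty)$; coercivity on $W_{\si,h}$ and on $W_{\infty,h}$ follows from \Cref{prop:nonlocalspace}(7) and \eqref{eq:PoincareNeumann}. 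Hence the discrete minimizers $u_{\si,h}$ of \eqref{eq:minprobnum} and $u_{\infty,h}$ are unique and well defined, so \Cref{prop:convergence} applies with no need to pass to subsequences.

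Finally, applying \Cref{prop:convergence}(2) with the identification $\cT_0 = L^p(\Omega)$ gives, for each fixed $h$, $\Vnorm{u_{\si,h}-u_{\infty,h}}_{L^p(\Omega)} = \Vnorm{u_{\si,h}-u_{\infty,h}}_{\cT_0}\to 0$ as $\si\to\infty$, which is the claim. For completeness I would unwind the short argument behind \Cref{prop:convergence}(2): \Cref{theorem:gammaconvergence:sigmatoinf} supplies the $\Gamma$-convergence $\cE_{\si,h}\to\cE_{\infty,h}$ in the $\cT_0$-strong topology; the coercivity \Cref{assumption2}(ii) shows any sequence $\{v_\si\in W_{\si,h}\}$ with $\sup_\si\cE_\si(v_\si)<\infty$ is bounded in $\cT_\si$, hence relatively compact in $L^p(\Omega)$ with limit points in $\cT_\infty$ by \Cref{assumption1}(ii); and the fundamental theorem of $\Gamma$-convergence \cite[Theorem 1.21]{braides2002gamma} then yields convergence of the unique minimizers. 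The main obstacle is not analytical but organizational — confirming that exactly the $h>0$ branch of \Cref{assumption4}(iii) suffices and that the strict convexity needed for uniqueness of $u_{\infty,h}$ (the one item not literally quoted from the cited results) indeed holds; both are handled above.
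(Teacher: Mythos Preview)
Your proposal is correct and follows essentially the same approach as the paper: the paper presents \Cref{theorem:conv1an} (like \Cref{theorem:conv1}) as an immediate consequence of having verified \Cref{assumption1}--\Cref{assumption3} and \Cref{assumption4'} (hence \Cref{assumption4}), after which \Cref{prop:convergence}(2) applies directly with $\cT_0 = L^p(\Omega)$. Your additional remarks on strict convexity of $\cE_\si$ and on the sufficiency of the $h>0$ branch of \Cref{assumption4}(iii) are helpful elaborations the paper leaves implicit, but they do not constitute a different route.
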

		
		The energies $\cE_\si$ satisfy \eqref{eq:FirstVariationEstimates:1}-\eqref{eq:FirstVariationEstimates:2} with $Q_p(s,t) = (s+t)^{\frac{p-2}{2}}$ in the case $p \geq 2$, and $Q_p(s,t) = (s+t)^{\frac{2-p}{2}}$ in the case $1 < p < 2$. This is because the function $\Phi_p(t) = |t|^{p-2} t$ for $t \in \bbR$ satisfies 
		\begin{equation*}
			\begin{split}
				|\Phi_p(s) - \Phi_p(t)|
				\leq C
				\begin{cases}
					|(\Phi_p(s) - \Phi_p(t))(s-t)|^{(p-1)/p}, & \quad \text{ for } 1 < p < 2, \\
					|(\Phi_p(s) - \Phi_p(t))(s-t)|^{1/2} (|s| + |t| )^{\frac{p-2}{2}}, & \quad \text{ for } 2 \leq p < \infty,
				\end{cases}
			\end{split}
		\end{equation*}
		and
		\begin{equation*}
			\begin{split}
				|s-t|
				\leq C
				\begin{cases}
					|(\Phi_p(s) - \Phi_p(t))(s-t)|^{1/2} (|s|^{2-p} + |t|^{2-p} )^{1/2}, & \quad \text{ for } 1 < p < 2, \\
					|(\Phi_p(s) - \Phi_p(t))(s-t)|^{1/p}, & \quad \text{ for } 2 \leq p < \infty.
				\end{cases}
			\end{split}
		\end{equation*}
		Therefore we have the following:
	\begin{theorem}\label{theorem:conv1bn}
			Take all the assumptions of the previous theorem. Let $u_{\si, h}$ and $u_{\si}$ be minimizers satisfying \eqref{eq:minprobnum} and \eqref{eq:minprob}, respectively. 
			Then, for fixed $\si$,
			$\|u_{\si, h}-u_{\si} \|_{\cT_{\si}} $ satisfies \eqref{eq:gammaconvergence2} and converges to $0$ as $h \to 0$.
		\end{theorem}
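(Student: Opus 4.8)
The plan is to derive the statement from the general strong-convergence result of \Cref{sec:ac} --- the theorem that, under \Cref{assumption5}, guarantees $u_{\si,h}\to u_\si$ in $\cT_\si$ with the explicit rate \eqref{eq:gammaconvergence2} --- since \Cref{assumption1}, \Cref{assumption2}, \Cref{assumption3}, and \Cref{assumption4} have already been verified for the present family of spaces and functionals in \Cref{theorem:verification1and2}, \Cref{coro:verification3}, and the discussion preceding \Cref{theorem:conv1}, while the convexity of $\cE_\si$ holds because $t\mapsto t^p/p$ is convex and $u\mapsto\vint{f,K_\delta u}$ is affine. Hence it remains only to verify that the energies $\cE_\si$ of \eqref{eq:energy} satisfy \Cref{assumption5}, with $Q_p(s,t)=C(s+t)^{(p-2)/2}$ for $p\geq 2$ and $Q_p(s,t)=C(s+t)^{(2-p)/2}$ for $1<p<2$ and $C$ independent of $\si$. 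Granting this, the cited theorem yields \eqref{eq:gammaconvergence2}; its constant is independent of $h$ because $Q_p$ is continuous and the norms $\|u_\si\|_{\cT_\si}$, $\|u_{\si,h}\|_{\cT_\si}$ are bounded uniformly in $(\si,h)$ (by the coercivity \Cref{assumption2}(ii) applied together with $\cE_\si(0)=0$). Finally, $\|u_{\si,h}-u_\si\|_{\cT_\si}\to 0$ as $h\to 0$ follows from \eqref{eq:gammaconvergence2} by choosing, for each $h$, an element $v_{\si,h}\in W_{\si,h}$ with $\|v_{\si,h}-u_\si\|_{\cT_\si}\to 0$ --- such a choice exists by \eqref{FEsq} (equivalently \Cref{assumption4'}(iii)) --- together with the continuity at $0$ of $t\mapsto t^{p/2}$ and $t\mapsto t^{2/p}$.

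To verify \Cref{assumption5}, set $\mathsf{D}_{\bx\by}u:=\dfrac{u(\bx)-u(\by)}{|\bx-\by|}$ and $\rmd\mu_\si(\bx,\by):=\dfrac{\rho(|\by-\bx|/\eta_\delta(\bx))}{\eta_\delta(\bx)^{d+p-\beta}\,|\bx-\by|^{\beta-p}}\,\rmd\by\,\rmd\bx$ with $\delta=\min\{\delta_0,1/\si\}$, so that $p\,G_{p,\delta}(u)=\int_\Omega\int_\Omega|\mathsf{D}_{\bx\by}u|^p\,\rmd\mu_\si$ and the first variation of $\cE_\si$ reads
\[
\vint{\cF_\si(u),w}=\int_\Omega\int_\Omega\Phi_p(\mathsf{D}_{\bx\by}u)\,\mathsf{D}_{\bx\by}w\,\rmd\mu_\si-\vint{f,K_\delta w},\qquad\Phi_p(t)=|t|^{p-2}t,
\]
with the affine $f$-term cancelling in $\cF_\si(u_\si)-\cF_\si(v)$. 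Two preliminary facts, both with constants uniform in $\si$, will be used throughout: by item (7) of \Cref{prop:nonlocalspace} and the Poincar\'e--Neumann inequality \eqref{eq:PoincareNeumann}, $\big(\int_\Omega\int_\Omega|\mathsf{D}_{\bx\by}w|^p\,\rmd\mu_\si\big)^{1/p}$ is comparable to $\|w\|_{\cT_\si}$ for mean-zero $w$ (in particular for $u_\si-v$ and its analogues); and, by duality together with H\"older's inequality on $\mu_\si$, $\|\cF_\si(u_\si)-\cF_\si(v)\|_{\cT_{-\si}}\leq C\big(\int_\Omega\int_\Omega|\Phi_p(\mathsf{D}_{\bx\by}u_\si)-\Phi_p(\mathsf{D}_{\bx\by}v)|^{p'}\,\rmd\mu_\si\big)^{1/p'}$ with $p'=p/(p-1)$.

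With these reductions, the bounds \eqref{eq:FirstVariationEstimates:1}--\eqref{eq:FirstVariationEstimates:2} are obtained by inserting the two pointwise inequalities for $\Phi_p$ displayed just above the statement into the corresponding double integrals and applying H\"older's inequality on $\mu_\si$. In the cases \eqref{eq:FirstVariationEstimates:1} with $1<p<2$ and \eqref{eq:FirstVariationEstimates:2} with $p\geq 2$ the pointwise bound integrates directly (using the identity $(p-1)p'/p=1$ in the former), and $Q_p$ may be taken constant; in the remaining two cases a second application of H\"older yields a surplus factor of the form $\big(\int_\Omega\int_\Omega(|\mathsf{D}_{\bx\by}u_\si|+|\mathsf{D}_{\bx\by}v|)^p\,\rmd\mu_\si\big)^{\theta}$, which the power-mean inequality and the seminorm equivalence bound by $C(\|u_\si\|_{\cT_\si}+\|v\|_{\cT_\si})^{p\theta}$, and the conjugacy identities $2-p'=(p-2)/(p-1)$ and $(p-2)p'/(2-p')=p$ (and, in the case $1<p<2$, $(2-p)\cdot\tfrac{p}{2-p}=p$) line up the exponents so that, after the concluding $p'$-th or $p$-th root, one recovers exactly the stated $Q_p$. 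The main obstacle is thus bookkeeping rather than conceptual: the H\"older exponents must be chosen so that the residual terms reassemble into the asserted powers of $\|u_\si\|_{\cT_\si}+\|v\|_{\cT_\si}$, and one must check that all constants involved --- those from item (7) of \Cref{prop:nonlocalspace}, from \eqref{eq:PoincareNeumann}, and the H\"older constants --- are independent of $\delta$, hence of $\si$. With \Cref{assumption5} thereby verified, the general theorem gives \eqref{eq:gammaconvergence2}, and the limit as $h\to 0$ follows as in the first paragraph.
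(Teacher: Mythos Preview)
Your proposal is correct and follows essentially the same approach as the paper: verify \Cref{assumption5} for the energies \eqref{eq:energy} via the pointwise inequalities for $\Phi_p$ displayed immediately before the theorem, then invoke the general strong-convergence result of \Cref{sec:convergnumer}. The paper is terser --- it simply records the pointwise bounds and the form of $Q_p$ and states ``Therefore we have the following'' --- whereas you spell out the first variation, the duality/H\"older argument, and the exponent bookkeeping; but the logical skeleton is identical.
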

		
		Note that the convergence result in the previous theorem also holds by \Cref{rmk:weakimpliesstr}.

        \begin{remark}
            Following the thread of reasoning in \Cref{rmk:Dirichlet} and \Cref{rmk:GammaConvergence:Dirichlet}, it is straightforward to check that all of the corresponding versions of these convergence theorems hold for the Dirichlet problem.
        \end{remark}

\section{Application to nonlocal models on a parameter-dependent domain and their FEM approximations}
\label{sec:varying}

The next example considered is a nonlocal convex functional with superquadratic growth and Dirichlet volume constraint.
The results here generalize studies made in \cite{tian2014asymptotically,tian2020sirv} for the special case of a quadratic nonlocal energy functionals corresponding to linear nonlocal problems with a constant horizon parameter.

\subsection{Nonlocal variational problems and nonlocal spaces}
We let $p \geq 2$, and let $\Omega \subset \bbR^d$ be a bounded domain, and we define the boundary layer $\Omega_{I_\delta} := \{ \bx \in \bbR^d \setminus \Omega \, : \, \dist(\bx,\p \Omega) \leq \delta \}$. Defining $\Omega_\delta := \Omega \cup \Omega_{I_\delta}$, we define the functional
\begin{equation*}
    \frac{1}{p} [u]_{\cS^p[\delta](\Omega)}^p := \frac{1}{p} \int_{\Omega_\delta} \int_{\Omega_\delta} \rho_\delta(|\bx-\by|)  \left| \frac{u(\bx)-u(\by)}{\delta} \right|^p \, \rmd \bx \, \rmd \by\,.
\end{equation*}
The function $\rho_\delta(|\bz|) := \delta^{-d} \rho(|\bz|/\delta)$ is a rescaling of a kernel $\rho \in L^1_{loc}((0,1))$ which satisfies \eqref{assump:VarProb:Kernel}.

We define $L^p_0(\Omega_{\delta})$ to be the subspace of functions in $L^p(\Omega_\delta)$ that vanish on $\Omega_{I_\delta}$.
The energy space is the subspace of the nonlocal function space
\begin{equation}
    \cS^p[\delta](\Omega) := \{ u \in  L^p(\Omega_\delta) \, : \, [u]_{\cS^p[\delta](\Omega)} < \infty \}\,,
\end{equation}
defined as
\begin{equation}
    \cS^p_0[\delta](\Omega) := \{ u \in  \cS^p[\delta](\Omega) \, : \,  u |_{\Omega_{I_\delta}} = 0\} = L^p_0(\Omega_\delta) \cap \cS^p[\delta](\Omega)\,,
\end{equation}
with norm $\vnorm{ u }_{\cS^p[\delta](\Omega_\delta)}^p := \vnorm{u}_{L^p(\Omega)}^p + [u]_{\cS^p[\delta](\Omega)}^p$. It can be shown that $\cS^p[\delta](\Omega)$ is a separable uniformly convex Banach space, with $\cS^p[\delta](\Omega) \subset L^p(\Omega_\delta)$, and $\cS^p_0[\delta](\Omega)$ is the completion of $L^p_0(\Omega_\delta) \cap C^\infty_c(\Omega)$ with respect to the norm $\vnorm{\cdot}_{\cS^p_0[\delta](\Omega)}$ \cite{mengesha2015VariationalLimit}.

We collect the relevant properties of the nonlocal function spaces in the following proposition.
\begin{proposition}\label{prop:ConstantHorizon:FxnSpProp}
	The following hold:
	\begin{enumerate}
		\item[\rm (1)] Density of smooth functions: $C^\infty(\overline{\Omega_\delta})$ is dense in $\cS^p[\delta](\Omega)$.
		\item[\rm (2)] Compactness: Let $\delta \to 0$, and let $\{ u_\delta \in \cS^p_0[\delta](\Omega) \}_\delta$ be a sequence such that $\sup_{\delta > 0} \vnorm{u_\delta}_{L^p(\Omega)} \leq C < \infty$ and $\sup_{\delta > 0} [u_\delta]_{\cS^p[\delta](\Omega)} := B < \infty$.	
		Then $\{ u_\delta \}_\delta$ is precompact in the strong topology of $L^p(\Omega)$. Moreover, any limit point $u$ belongs to $W^{1,p}_0(\Omega)$ with $\Vnorm{u}_{L^{p}(\Omega)} \leq C$ and $\Vnorm{\grad u}_{L^{p}(\Omega)} \leq B$.
		\item[\rm (3)] Poincar\'e inequality: There exist constants $\delta_0 \in (0,1)$ and $C > 0$ such that for any $\delta < \delta_0$,
		\begin{equation*}
			\vnorm{u}_{L^p(\Omega)} \leq C [u]_{\cS^p[\delta](\Omega)}, \quad \forall u \in \cS^p_0[\delta](\Omega)\,.
		\end{equation*}
	\end{enumerate}
\end{proposition}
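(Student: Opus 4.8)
All three assertions are of classical nonlocal type, and the plan is to adapt to the present constant-horizon setting the corresponding arguments of \cite{mengesha2015VariationalLimit}; the analogous items of \Cref{prop:nonlocalspace} provide a template.

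\textbf{(1)} For the density statement the plan is a mollification argument preceded by a slight inward dilation near $\p \Omega_\delta$. Since $\rho_\delta$ is supported in $\{ |\bz| < \delta \}$, the seminorm $[\,\cdot\,]_{\cS^p[\delta](\Omega)}$ involves only interactions of a fixed finite range. Writing $\overline{\Omega_\delta}$ as a finite union of subdomains each star-shaped with respect to a ball, one replaces $u$ on each by a dilation $u_t(\bx) := u(\bx_0 + t^{-1}(\bx-\bx_0))$ with $t > 1$ close to $1$, patched by a partition of unity; the resulting function is defined on a neighborhood of $\overline{\Omega_\delta}$, where it can be convolved with a mollifier of radius $\epsilon \ll \delta$. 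The $L^p$-error vanishes as $t \downarrow 1$ and $\epsilon \downarrow 0$, while the double-integral seminorm stays controlled by the $L^p$-continuity of translations and the Minkowski integral inequality (the dilation supplying the room needed near $\p \Omega_\delta$), together with a routine rescaling comparison of seminorms for nearby horizons.

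\textbf{(2)} For the compactness statement the plan is to verify the Fr\'echet--Kolmogorov precompactness criterion in $L^p(\Omega)$. The uniform $L^p$-bound is assumed, so it remains to bound $\vnorm{ u_\delta(\cdot+\bz) - u_\delta }_{L^p(\bbR^d)}$ by a modulus of continuity in $\bz$ that is uniform in $\delta$, where each $u_\delta$ is extended by $0$ to $\bbR^d$ (legitimate since $u_\delta = 0$ on $\Omega_{I_\delta}$). The key step is a telescoping estimate: for $|\bz| \le c_\rho \delta$ the lower bound $\rho \ge C_\rho$ on $[-c_\rho,c_\rho]$ turns the definition of $[\,\cdot\,]_{\cS^p[\delta](\Omega)}$ into $\vnorm{ u_\delta(\cdot+\bz) - u_\delta }_{L^p(\bbR^d)} \lesssim \delta\, [u_\delta]_{\cS^p[\delta](\Omega)}$; for arbitrary $\bz$ one writes the shift by $\bz$ as a composition of $\lceil |\bz|/(c_\rho\delta) \rceil$ shifts of length at most $c_\rho\delta$ and sums with the triangle inequality in $L^p$, obtaining $\vnorm{ u_\delta(\cdot+\bz) - u_\delta }_{L^p(\bbR^d)} \lesssim |\bz|\, [u_\delta]_{\cS^p[\delta](\Omega)} \le B\,|\bz|$. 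Once a subsequence converges in $L^p(\Omega)$ to some $u$, the zero extensions converge in $L^p(\bbR^d)$ to the zero extension of $u$; the uniform seminorm bound together with the nonlocal liminf inequality of \cite{mengesha2015VariationalLimit} — which, with the normalization of $\rho$ in \eqref{assump:VarProb:Kernel}, has limiting constant $1$ — gives $u \in W^{1,p}(\bbR^d)$ with $\vnorm{\grad u}_{L^p(\bbR^d)} \le B$; since this extension vanishes off $\Omega$ we get $u \in W^{1,p}_0(\Omega)$, and $\vnorm{u}_{L^p(\Omega)} \le C$ passes to the limit.

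\textbf{(3)} The Poincar\'e inequality follows from (2) by compactness and contradiction. Were it false for every choice of $\delta_0$ and $C$, there would exist $\delta_n \to 0$ and $u_n \in \cS^p_0[\delta_n](\Omega)$ with $\vnorm{u_n}_{L^p(\Omega)} = 1$ and $[u_n]_{\cS^p[\delta_n](\Omega)} \to 0$. By (2) a subsequence converges in $L^p(\Omega)$ to some $u \in W^{1,p}_0(\Omega)$ with $\vnorm{u}_{L^p(\Omega)} = 1$ and $\vnorm{\grad u}_{L^p(\Omega)} \le \liminf_n [u_n]_{\cS^p[\delta_n](\Omega)} = 0$, hence $u$ is constant; being in $W^{1,p}_0(\Omega)$ it vanishes, contradicting $\vnorm{u}_{L^p(\Omega)} = 1$. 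The principal obstacle is the telescoping estimate of (2): making it genuinely uniform as $\delta \to 0$ and invoking the nonlocal $\liminf$ inequality with the correct normalization so the limit lands in $W^{1,p}_0(\Omega)$ and not merely $W^{1,p}$ — precisely where the volume constraint $u_\delta|_{\Omega_{I_\delta}} = 0$ and the normalization of $\rho$ enter.
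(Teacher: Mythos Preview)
Your proposal is correct and takes essentially the same approach as the paper: the paper's proof is terse, citing \cite[Theorem~3.3]{Mengesha2023Linearization} for (1), invoking \cite[Theorem~4]{BBM} (and \cite{mengesha2015VariationalLimit}) directly for (2), and deriving (3) from (2) by the same compactness-and-contradiction argument you describe. Your outline simply unpacks what those references contain; the only caveat is that the pointwise-in-$\bz$ translation bound in (2) does not follow \emph{immediately} from the seminorm (which controls an average over $\bz$), but is recovered by the standard averaging/mollification trick implicit in the BBM argument.
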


\begin{proof}
	(1) follows from an argument similar to that of \cite[Theorem 3.3]{Mengesha2023Linearization}.
	To see (2), note that the uniform bound on the seminorm implies that
	\begin{equation*}
		\int_\Omega \int_\Omega \rho_\delta(|\bx-\by|)  \left| \frac{u_\delta(\bx)-u_\delta(\by)}{\delta} \right|^p \, \rmd \bx \, \rmd \by \leq B\,.
	\end{equation*}
	Then by \cite[Theorem 4]{BBM} (see also \cite{mengesha2015VariationalLimit}) the result follows. Last, (3) follows from (2) and a contradiction argument; see for instance \cite{MengeshaDuElasticity} in which a similar nonlocal Poincar\'e-Korn inequality is established.
\end{proof}


    To apply the abstract framework in \Cref{sec:ac}, we set the following notation for the function spaces. 
	\begin{defn}\label{definitionspace:ConstantHorizon}
		For $2 \leq p < \infty$ and $0 < \delta \leq 1$,
		\beq
		\label{eq:spacefamily:ConstantHorizon}
		\cT_\si = \left \{
		\begin{aligned}
			& W^{1,p}_0(\Omega)\quad \text{for } \si = \infty,\\[4pt]
			&\cS^p_0[ \min \{ \delta_0, 1/\sigma \} ](\Omega) \quad \text{for } \si \in (0, \infty),\\[4pt]
			&L^p_0(\Om_{\delta_0}) \quad \text{for } \si = 0,\\[4pt]
			&L^{p'}(\Omega_{ \min\{\delta_0,-1/\si\} }) \quad \text{for } \si \in (-\infty,0), 
			\\
			&L^{p'}(\Omega) \quad \text{for } \si = -\infty. 
		\end{aligned}
		\right.
		\eeq
	\end{defn}
        For $\si = 1/\delta > 0$ we define the functionals $\cG_\si$ as 
        \begin{equation}\label{eq:ConstantHorizon:Fxnal}
            \cG_\si(u) := \frac{1}{p} [u]_{\cS^p[1/\si](\Omega)}^p  - 
            \vint{f, u}\,, \quad u \in \cT_\si \text{ and } f \in \cT_{-\si}\,.
        \end{equation}
Therefore, the parametrized energies are
\begin{equation}\label{eq:energy:constanthorizon}
    \cE_\si(u) := \cG_{\si}(u) - 
    \vint{f, u}\,, \qquad u \in \cT_\si \text{ and } f \in \cT_{-\si}\,.
\end{equation}

Under the above normalization condition on $\rho$, we formally have $\cE_\si(u) \to \cE_\infty(u)$, where
\begin{equation*}
    \cE_\infty(u) := \cG_\infty(v) - \vint{f,v} = \frac{1}{p} \int_\Omega |\grad u|^p \, \rmd \bx - \vint{f,u}\,.
\end{equation*}

\subsection{Properties of the function space and Gamma-convergence to the local limit}

%
%

With these background results, we can verify \Cref{assumption1} and \Cref{assumption2}.

\begin{theorem}
    The family of spaces and functionals defined by \eqref{eq:spacefamily:ConstantHorizon} and \eqref{eq:energy:constanthorizon} satisfy \Cref{assumption1} and \Cref{assumption2}.
\end{theorem}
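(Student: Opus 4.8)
The plan is to verify each item of \Cref{assumption1} and \Cref{assumption2} in turn, exploiting the properties of the spaces $\cS^p_0[\delta](\Omega)$ collected in \Cref{prop:ConstantHorizon:FxnSpProp}, much in the spirit of \Cref{theorem:verification1and2} from the previous section. First, for \Cref{assumption1}(i), the uniform embedding $M_1 \|u\|_{L^p(\Omega)} \leq \|u\|_{\cT_\si}$ is immediate from the definition of the norm $\vnorm{\cdot}_{\cS^p[\delta](\Omega)}^p = \vnorm{\cdot}_{L^p(\Omega)}^p + [\cdot]_{\cS^p[\delta](\Omega)}^p$ on $\cT_\si$ for $\si \in (0,\infty)$ and from the standard embedding $W^{1,p}_0(\Omega) \hookrightarrow L^p(\Omega)$ for $\si = \infty$, with $M_1 = 1$; one should note that $\cT_0 = L^p_0(\Omega_{\delta_0})$ is identified with $L^p(\Omega)$ via zero extension, so the pivot-space structure is consistent. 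For \Cref{assumption1}(ii), the asymptotically compact embedding is exactly the content of \Cref{prop:ConstantHorizon:FxnSpProp}(2): a sequence with $\sup_\si \vnorm{v_\si}_{\cT_\si} \leq C$ has $\sup_\si \vnorm{v_\si}_{L^p(\Omega)} \leq C$ and $\sup_\si [v_\si]_{\cS^p[1/\si](\Omega)} \leq C$, hence is precompact in $L^p(\Omega)$ with every limit point in $W^{1,p}_0(\Omega) = \cT_\infty$.

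Next I would turn to \Cref{assumption2}. The functionals $\cE_\si(u) = \frac1p [u]_{\cS^p[1/\si](\Omega)}^p - \vint{f,u}$ are $\cT_\si$-weakly lower semicontinuous: the seminorm term is convex and strongly continuous on $\cT_\si$, hence weakly lsc, and $u \mapsto \vint{f,u}$ is weakly continuous since $f \in \cT_{-\si}$. For the continuity estimate \Cref{assumption2}(i), I would write $\cE_\si(u) - \cE_\si(v) = \frac1p([u]^p - [v]^p) - \vint{f, u-v}$, bound the linear term by $\vnorm{f}_{\cT_{-\si}} \vnorm{u-v}_{\cT_\si}$ (and control $\vnorm{f}_{\cT_{-\si}}$ uniformly in $\si$ using that $f$ is, say, in $L^{p'}(\Omega)$ so $\vint{f,u} \le \|f\|_{L^{p'}(\Omega)}\|u\|_{L^p(\Omega)} \le \|f\|_{L^{p'}(\Omega)}\|u\|_{\cT_\si}$), and for the seminorm difference use the elementary inequality $\big| a^p - b^p \big| \leq p(a^{p-1} + b^{p-1})|a-b|$ together with the fact that $[\cdot]_{\cS^p[1/\si](\Omega)}$ is a seminorm (so $|[u] - [v]| \leq [u-v] \leq \vnorm{u-v}_{\cT_\si}$), yielding the bound with $\varphi(s,t) = (s^{p-1} + t^{p-1}) + C$ for a constant $C = C(\|f\|_{L^{p'}(\Omega)})$ independent of $\si$. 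For the coercivity \Cref{assumption2}(ii), apply the uniform Poincaré inequality \Cref{prop:ConstantHorizon:FxnSpProp}(3), valid for $\delta < \delta_0$ with a constant independent of $\delta$, to get $\vnorm{u}_{L^p(\Omega)}^p \leq C_P [u]^p$, hence $\vnorm{u}_{\cT_\si}^p = \vnorm{u}_{L^p(\Omega)}^p + [u]^p \leq (1+C_P)[u]^p = p(1+C_P)\cG_\si(u) + p(1+C_P)\vint{f,u}$; then absorb the linear term using Young's inequality $\vint{f,u} \le \|f\|_{L^{p'}(\Omega)}\|u\|_{\cT_\si} \leq \frac{\alpha'}{p}\|u\|_{\cT_\si}^p + C_\epsilon$, producing $\alpha \vnorm{u}_{\cT_\si}^p \leq \cE_\si(u) + \psi(\vnorm{u}_{\cT_\si})$ with $\psi$ an affine function of $t$, which satisfies $\psi(t)/t^p \to 0$ since $p > 1$.

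The main obstacle I anticipate is not any single estimate but rather the bookkeeping needed to make all constants genuinely uniform in $\si$ (equivalently in $\delta = \min\{\delta_0, 1/\si\}$) — in particular ensuring the Poincaré constant in \Cref{prop:ConstantHorizon:FxnSpProp}(3) and the implicit comparison between the seminorms on $\cS^p_0[\delta](\Omega)$ for different $\delta$ do not degenerate as $\delta \to 0$, and that the duality pairing $\vint{f,\cdot}$ is interpreted compatibly across the scale of spaces $\cT_\si$ (via restriction/zero extension between $\Omega$, $\Omega_\delta$, and $\Omega_{\delta_0}$). Once the uniform Poincaré bound is in hand from \Cref{prop:ConstantHorizon:FxnSpProp}, the remaining arguments are routine, and the proof closely parallels that of \Cref{theorem:verification1and2}.
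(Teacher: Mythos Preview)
Your proposal is correct and follows essentially the same approach as the paper: \Cref{assumption1}(i) from the definition of the norms, \Cref{assumption1}(ii) from the compactness item (2) of \Cref{prop:ConstantHorizon:FxnSpProp}, and \Cref{assumption2} from the Poincar\'e inequality item (3), with the same choices $\varphi(s,t) = C(1 + s^{p-1} + t^{p-1})$ and $\psi(t) = C\vnorm{f}_{\cT_{-\si}} t$. You supply considerably more detail than the paper's terse proof (the explicit $|a^p-b^p|$ bound, the weak lower semicontinuity argument, the Young-inequality absorption), but the underlying strategy and the cited ingredients are identical.
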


\begin{proof}
    \Cref{assumption1}(i) follows from the definition of the spaces. \Cref{assumption1}(ii) follows from item (2) of \Cref{prop:ConstantHorizon:FxnSpProp}.
    Finally, \Cref{assumption2} (with $\varphi(s,t) = C(1 + s^{p-1} + t^{p-1})$ for a constant $C$ independent of $\si$ and $\psi(t) = C\vnorm{f}_{\cT_{-\si}} t$) follows from item (3) of \Cref{prop:ConstantHorizon:FxnSpProp}.
\end{proof}

Furthermore, we have the Gamma-convergence to the local limit:
\begin{theorem}\label{thm:GammaConvergence:Example2}
	The functionals $\overline{\cE}_\si$ satisfy \Cref{assumption3}.
\end{theorem}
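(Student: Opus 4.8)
The plan is to mirror the structure of the proof of \Cref{thm:GammaConvergence:Example}: peel off the linear term, reduce to the $\Gamma$-convergence of the nonlocal $p$-Dirichlet energies, and then invoke \Cref{prop:ConstantHorizon:FxnSpProp} together with the Bourgain--Brezis--Mironescu (BBM) analysis. Write $\delta(\si) := \min\{\delta_0,1/\si\}$, and for $\si\in(0,\infty)$ set $J_\si(u) := \tfrac1p[u]_{\cS^p[1/\si](\Omega)}^p$ for $u\in\cT_\si$ and $J_\si(u):=+\infty$ for $u\in\cT_0\setminus\cT_\si$, and $J_\infty(u):=\tfrac1p\int_\Omega|\grad u|^p\,\rmd\bx$ for $u\in\cT_\infty=W^{1,p}_0(\Omega)$ (regarded as a subspace of $\cT_0=L^p_0(\Omega_{\delta_0})$ by extension by zero) and $J_\infty(u):=+\infty$ otherwise, so that $\overline{\cE}_\si = J_\si - \vint{f,\cdot}$ for all $\si\in(0,\infty]$ as in \eqref{eq:energy:constanthorizon}. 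Since $f\in\cT_{-\infty}=L^{p'}(\Omega)$, extending $f$ by zero shows $u\mapsto\vint{f,u}$ is continuous on $\cT_0$ in its strong topology; as adding a continuous functional does not affect $\Gamma$-limits \cite{braides2002gamma}, it suffices to prove $J_\si\xrightarrow{\Gamma}J_\infty$ in the $\cT_0$-strong topology as $\si\to\infty$.

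For the liminf inequality I would take $\si_n\to\infty$, so $\delta_n:=\delta(\si_n)\to0$, and $u_n\to u$ in $L^p_0(\Omega_{\delta_0})$; assuming $\ell:=\liminf_n J_{\si_n}(u_n)<\infty$, I would pass to a subsequence along which $J_{\si_n}(u_n)\to\ell$ and $\sup_n[u_n]_{\cS^p[\delta_n](\Omega)}<\infty$, noting that then $u_n\in\cS^p_0[\delta_n](\Omega)$, i.e.\ $u_n\equiv 0$ on the collar $\Omega_{I_{\delta_n}}$. Discarding the nonnegative contributions from pairs that meet the collar yields
\[
[u_n]_{\cS^p[\delta_n](\Omega)}^p \;\ge\; \int_{\Omega}\!\int_{\Omega}\rho_{\delta_n}(|\bx-\by|)\Big|\frac{u_n(\bx)-u_n(\by)}{\delta_n}\Big|^p\,\rmd\bx\,\rmd\by\,.
\]
By \Cref{prop:ConstantHorizon:FxnSpProp}(2) the limit $u$ lies in $W^{1,p}_0(\Omega)$, and the lower-semicontinuity half of the BBM theorem (\cite{BBM}; see also \cite{mengesha2015VariationalLimit}), applied on $\Omega$, gives $\int_\Omega|\grad u|^p\,\rmd\bx \le \liminf_n \int_\Omega\!\int_\Omega\rho_{\delta_n}(|\bx-\by|)\big|\tfrac{u_n(\bx)-u_n(\by)}{\delta_n}\big|^p\,\rmd\bx\,\rmd\by \le p\,\ell$, i.e.\ $J_\infty(u)\le\ell$.

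For the recovery sequence I would first treat $u\in C^\infty_c(\Omega)$: for $n$ large with $\delta_n<\dist(\supp u,\p\Omega)$, the zero extension of $u$ belongs to $\cS^p_0[\delta_n](\Omega)$, so $u_n:=u$ converges to $u$ in $\cT_0$, and the classical pointwise limit obtained by Taylor-expanding $u$ and using the normalization of $\rho$ in \eqref{assump:VarProb:Kernel} (cf.\ \cite{BBM,mengesha2015VariationalLimit}) gives $[u]_{\cS^p[\delta_n](\Omega)}^p\to\int_\Omega|\grad u|^p\,\rmd\bx$, hence $\limsup_n J_{\si_n}(u_n)\le J_\infty(u)$. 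For general $u\in W^{1,p}_0(\Omega)$ I would pick $w_k\in C^\infty_c(\Omega)$ with $w_k\to u$ in $W^{1,p}(\Omega)$ (hence in $L^p(\Omega)$, hence in $\cT_0$ after extension by zero); then $J_\infty(w_k)\to J_\infty(u)$ since $v\mapsto\tfrac1p\int_\Omega|\grad v|^p\,\rmd\bx$ is continuous on $W^{1,p}_0(\Omega)$, and since the $\Gamma$-upper limit $J'':=\Gamma\text{-}\limsup_n J_{\si_n}$ is $\cT_0$-lower semicontinuous with $J''\le J_\infty$ on the smooth functions just handled, one gets $J''(u)\le\liminf_k J''(w_k)\le\liminf_k J_\infty(w_k)=J_\infty(u)$, which produces a recovery sequence for $u$. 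Together with the liminf inequality this gives $J_\si\xrightarrow{\Gamma}J_\infty$, hence $\overline{\cE}_\si\xrightarrow{\Gamma}\overline{\cE}_\infty$, which is \Cref{assumption3}.

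The main obstacle is the liminf inequality. The uniform seminorm bound plus \Cref{prop:ConstantHorizon:FxnSpProp}(2) only controls $\Vnorm{\grad u}_{L^p(\Omega)}$ by $\sup_n[u_n]_{\cS^p[\delta_n](\Omega)}$, not by the $\liminf$; the sharp bound requires the lower-semicontinuity part of the BBM theorem. The accompanying point to verify carefully is that replacing the ambient domain $\Omega_{\delta_n}$ by $\Omega$ in the seminorm is harmless, which uses both the nonnegativity of the discarded pair-interactions and the fact that $u_n$ vanishes on $\Omega_{I_{\delta_n}}$ so that no energy is lost in the limit. The recovery-sequence step is then routine once the smooth case is established, via the standard density argument for $\Gamma$-convergence.
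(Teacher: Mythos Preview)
Your proof is correct and follows essentially the same route as the paper: both reduce to the BBM/Ponce $\Gamma$-convergence of the nonlocal $p$-Dirichlet energies after peeling off the continuous linear term. The only notable difference is in the recovery step: the paper takes, for an arbitrary $u\in W^{1,p}_0(\Omega)$, the constant sequence given by the zero extension $u_\si := u\cdot\mathds{1}_\Omega$ on $\Omega_{1/\si}$ and invokes the pointwise limit $\cE_\si(u_\si)\to\cE_\infty(u)$ directly (via \cite{ponce2004new}), whereas you first establish the pointwise limit for $u\in C^\infty_c(\Omega)$ and then pass to general $u\in W^{1,p}_0(\Omega)$ by the standard lower-semicontinuity-of-$\Gamma\text{-}\limsup$ density argument. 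Your route is slightly more elementary in that it avoids the pointwise BBM limit for nonsmooth Sobolev functions; the paper's is shorter. Both are standard and valid.
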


\begin{proof}
	The $\Gamma$-limsup and $\Gamma$-liminf inequalities can be proved via arguments similar to those of \cite[Lemma 12.1 and 12.2]{ponce2004new}. One point of departure is when choosing a recovery sequence; given $u \in W^{1,p}_0(\Omega)$ and $\si > 1/\delta_0$, define $u_\si : \Omega_\delta \to \bbR$ to be $u$ on $\Omega$ and $0$ on $\Omega_{I_{1/\si}}$. Then $u_\si \to u$ in $L^p(\Omega_{\delta_0})$ (extending both functions by $0$ to the ambient domain $\Omega_{\delta_0}$) and $\cE_\si(u_\si) \to \cE_\infty(u)$.
\end{proof}

\subsection{Galerkin finite element discretization and AC property}
Assume that $\Omega$ has a piecewise flat boundary
with a quasi-uniform and shape-regular triangulation $\tau_h=\{\tau\}$ parametrized by the mesh parameter $h$ which measures 
the largest size of any element $\tau\in \tau_h$.
We set
\[
\wt{W}_{h} := \{ v \in L^p(\Omega) : v|_\tau \in P(\tau), \quad \forall \tau \in \tau_h, v|_{\p \Omega}=0\},
\]
where $P(\tau)=\mathcal P_N(\tau)$ is the space of polynomials on $\tau\in\tau_h$
of degree less than or equal to $N$ for some  $N \in \mathbb{N}$. 
Thinking of each $v \in \wt{W}_{h}$ as defined on $\Omega_{\delta_0}$ via extension by $0$, we then define the finite-dimensional spaces
\begin{equation*}
	W_{\si,h} := \wt{W}_{h} \, \cap \cT_\si\,.
\end{equation*}

Note that, as $\tau_h$ and $P$ are independent of $\si$, and since $\cup_{\si \in (0,\infty]} W_{\si,h} = \wt{W}_h \cap \left( \cup_{\si \in (0,\infty]} \cT_{\si} \right)$, 
we have that $Z_h<\infty$ and \Cref{assumption4}(ii) is satisfied. 
%
Next,
given any $v\in\cT_\si$ and	any $\epsilon>0$, 
since $\cT_* := C^{1}_c(\Omega) \cap L^p_0(\Omega_{\delta_0})$ is dense in $\cT_\si$ with respect to the strong topology of the latter, we can find $w\in\cT_*$ such that
$\|w-v\|_{\cT_\si} \leq \epsilon/2$. 
By classical approximation theory, we can approximate any function $w\in\cT_*$ 
using elements of $W_{\si,h}$ as $h\to 0$, that is, there is some $v_h$ for a small enough $h$ satisfying $\|w-v_h\|_{\cT_\si}\leq \epsilon/2$. 
Thus, we have
$\|v-v_h\|_{\cT_\si} \leq \epsilon$. This means that
there exists a sequence $\{v_h\in W_{\si, h}\}$ such that
\beq \label{FEsq:constanthorizon}
\| v_h-v\|_{\cT_\si}\to0\quad\text{as}\quad h\to0.
\eeq
This gives \Cref{assumption4}(i). 
Finally, to check \Cref{assumption4}(iii),
for convenience, we  define a special family of spaces.

\begin{defn} \label{defn:FEM:constanthorizon}
	For $h > 0$, let $\widehat{W}_{\infty, h} \subset W_{\infty,h}
	\subset \cT_\infty = W^{1,p}_0(\Omega)$
	be the continuous piecewise linear finite element space that corresponds to the same mesh $\tau_h$ in the definition of $W_{\si, h}$. For $h =0$, define $\widehat{W}_{\infty,0} := \cT_\infty$.
\end{defn}
Let $v \in \widehat{W}_{\infty,h}$, and let $\{(\si_n,h_n)\}$ be a sequence as described in \Cref{assumption4}(iii). 
Define $v_{\si_n}$ to be the extension of $v$ to all of $\Omega_{1/\si_n}$ by $0$. Then clearly $\{v_{\si_n} \in \cT_{\si_n} \}$ is a recovery sequence by the argument in \Cref{thm:GammaConvergence:Example2}.

In the first case, the function $v_n = v_{\si_n}$ satisfies the asymptotic approximation property, and so \Cref{assumption4}(iii) is satisfied.

In the second case, it is a simple fact that the continuous piecewise linear subspace $\widehat{W}_{\infty,h}$ of $W^{1,p}_0(\Omega)$ approximates the whole space as the mesh size goes to zero. Therefore, defining $\{ w_n \in \widehat{W}_{\infty,h_n} \}$ to be an approximating sequence of $v$ and then defining $v_n$ by extending each $w_n$ to all of $\Omega_{1/\si_n}$ by zero so that $v_n \in W_{\si_n,h_n}$, we get that $\vnorm{ v_{n} - v_{\si_n}}_{\cT_{\si_n}} \leq C \vnorm{v_n-w_n}_{W^{1,p}(\Omega_{1/\si_n})} = C \vnorm{v-w_n}_{W^{1,p}(\Omega)} \to 0$ as $n \to \infty$. 
Therefore \Cref{assumption4}(iii) is satisfied.

With all \Cref{assumption1}, \Cref{assumption2}, 
\Cref{assumption3}, and
\Cref{assumption4} verified, 
we get the  following theorems on asymptotically compatible schemes for nonlocal minimization problems with a varying domain.

		\begin{theorem}[Convergence as $\si \to \infty$ and $h \to 0$]\label{theorem:conv1:constanthorizon}
			Let $u_{\si, h} \in W_{\si,h}$ be a minimizer of \eqref{eq:minprobnum}, with $\cE_\si$ given by \eqref{eq:energy:constanthorizon}, $\cT_\si$ given by \Cref{definitionspace:ConstantHorizon}, and
			$\widehat W_{\infty, h}$ defined in \Cref{defn:FEM:constanthorizon}.
			If $\widehat W_{\infty,h}\subset W_{\si, h}$, then $\| u_{\si, h}-u_\infty \|_{L^p(\Om)}\to0$ as $\si
			\to \infty$ and $h\to0$, where  $u_\infty$ is the solution of \eqref{eq:Weak:Neumann:Local} with 
   $\cE_\infty$ given by \eqref{eq:energy:local}.
		\end{theorem}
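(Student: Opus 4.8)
The plan is to invoke the abstract convergence result \Cref{prop:convergence}, whose hypotheses have all been verified for the present example in the preceding discussion. Concretely, one first notes that \Cref{assumption1} and \Cref{assumption2} hold by the theorem preceding \Cref{thm:GammaConvergence:Example2}, that \Cref{assumption3} holds by \Cref{thm:GammaConvergence:Example2}, and that \Cref{assumption4} has just been checked: part (ii) follows from $\tau_h$ and $P$ being $\si$-independent together with $\cup_{\si} W_{\si,h} = \wt W_h \cap (\cup_\si \cT_\si)$; part (i) follows from the density argument using $\cT_* = C^1_c(\Omega) \cap L^p_0(\Omega_{\delta_0})$ and classical finite element approximation; and part (iii) follows from the recovery-sequence construction via extension-by-zero, split into the $h>0$ and $h=0$ cases as above. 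Since $\cE_\si$ is convex (indeed $\cS^p_0[\delta](\Omega)$ is uniformly convex and $\Phi_p(t) = t^p/p$ is convex with $p \geq 2$), the minimizers $u_\si$ and $u_{\si,h}$ are unique, so \Cref{prop:convergence} applies verbatim.

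The single remaining point to address is the hypothesis $\widehat W_{\infty,h} \subset W_{\si,h}$, which is assumed in the statement; under this inclusion, \Cref{assumption4}(iii) can be fulfilled using the continuous piecewise linear space $\widehat W_{\infty,h}$ as the approximating family, exactly as in \Cref{defn:FEM:constanthorizon} and the lines following it. I would therefore structure the proof as: (1) recall that all of \Cref{assumption1}--\Cref{assumption4} are in force for the data of this section; (2) invoke the convexity of $\cE_\si$ to obtain uniqueness of $u_{\si,h}$ and $u_\infty$; (3) apply item (3) of \Cref{prop:convergence}, i.e. \eqref{eq:gammaconvergence4}, with $\cT_0 = L^p_0(\Omega_{\delta_0})$, to conclude that $\|u_{\si_n,h_n} - u_\infty\|_{\cT_0} \to 0$ for every pair of sequences $\si_n \to \infty$, $h_n \to 0$; and (4) observe that the $\cT_0$-norm restricted to functions supported in $\Omega$ coincides with the $L^p(\Omega)$-norm, so this is precisely the claimed convergence $\|u_{\si,h} - u_\infty\|_{L^p(\Omega)} \to 0$, and that $u_\infty$ solves the local variational problem \eqref{eq:Weak:Neumann:Local} with $\cE_\infty$ given by the local Dirichlet energy.

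The proof is essentially a bookkeeping exercise: the genuine mathematical content has already been expended in verifying \Cref{assumption1}--\Cref{assumption4} for this example, particularly the compactness in \Cref{prop:ConstantHorizon:FxnSpProp}(2) (which supplies the asymptotically compact embedding) and the $\Gamma$-convergence in \Cref{thm:GammaConvergence:Example2} (whose recovery sequence is reused to verify \Cref{assumption4}(iii)). The only mild subtlety, and the step I would be most careful about, is ensuring that the ambient pivot space $\cT_0 = L^p_0(\Omega_{\delta_0})$ is the correct one in which to read off the $\Gamma$-convergence and the compactness simultaneously — i.e. that extending every $W_{\si,h}$-function by zero to $\Omega_{\delta_0}$ is consistent across all $\si$ and $h$, so that the convergences in \eqref{eq:gammaconvergence4} are genuinely taking place in a single fixed Banach space. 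Since this consistency is built into \Cref{definitionspace:ConstantHorizon} and \Cref{defn:FEM:constanthorizon} (all functions are viewed as living on $\Omega_{\delta_0}$ via zero-extension), the identification $\|\cdot\|_{\cT_0} = \|\cdot\|_{L^p(\Omega)}$ on the relevant functions is immediate, and no further work is needed.
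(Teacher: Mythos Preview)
Your proposal is correct and follows exactly the paper's approach: the paper does not give a separate proof of this theorem but simply states it as an immediate consequence of having verified \Cref{assumption1}--\Cref{assumption4} and then applying \Cref{prop:convergence}(3). Your additional care about identifying $\cT_0 = L^p_0(\Omega_{\delta_0})$ with $L^p(\Omega)$ on the relevant functions is a sound clarification that the paper leaves implicit.
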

		\begin{theorem}[Convergence as $\si \to \infty$ with fixed $h$]\label{theorem:conv1an:constanthorizon}
			Take all the assumptions of the previous theorem. Let $u_{\si, h}$ and $u_{\infty,h}$ be discrete minimizers satisfying \eqref{eq:minprobnum}.
			Then, for fixed $h$,
			we have $\|u_{\si, h}-u_{\infty,h} \|_{L^p(\Omega)} \to 0$ as $\si \to \infty$.
		\end{theorem}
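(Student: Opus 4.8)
The plan is to invoke \Cref{prop:convergence}(2), which gives exactly this convergence statement in the abstract setting, once all its hypotheses are checked. By the discussion immediately preceding the statement, \Cref{assumption1}, \Cref{assumption2}, \Cref{assumption3}, and \Cref{assumption4} have all been verified for the family $\{\cT_\si\}$ of \Cref{definitionspace:ConstantHorizon} and the energies $\cE_\si$ of \eqref{eq:energy:constanthorizon}. The remaining hypothesis in \Cref{prop:convergence} is convexity of $\cE_\si$: since $p \geq 2$, the integrand $t \mapsto |t|^p/p$ is strictly convex, so $u \mapsto \frac1p [u]_{\cS^p[1/\si](\Omega)}^p$ is convex, and subtracting the linear functional $\vint{f,\cdot}$ preserves convexity; hence $\cE_\si$ is convex (in fact strictly convex on the quotient by constants, but uniqueness already follows from the Poincaré inequality in \Cref{prop:ConstantHorizon:FxnSpProp}(3)), and the minimizers $u_{\si,h}$ and $u_{\infty,h}$ of \eqref{eq:minprobnum} are unique. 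Thus \Cref{prop:convergence}(2) applies with $h$ fixed, yielding $\| u_{\si,h} - u_{\infty,h} \|_{\cT_0} \to 0$ as $\si \to \infty$.

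It remains only to translate the abstract conclusion into the stated $L^p(\Omega)$ convergence. By \Cref{definitionspace:ConstantHorizon}, $\cT_0 = L^p_0(\Omega_{\delta_0})$, so the convergence $\| u_{\si,h} - u_{\infty,h} \|_{\cT_0} \to 0$ is convergence in $L^p(\Omega_{\delta_0})$. Since every element of $W_{\si,h} = \wt{W}_h \cap \cT_\si$ vanishes on $\Omega_{\delta_0} \setminus \Omega$ (indeed $\wt W_h$ consists of functions extended by zero outside $\Omega$, and the volume constraint forces vanishing on the collar), the $L^p(\Omega_{\delta_0})$ norm of the difference $u_{\si,h} - u_{\infty,h}$ coincides with its $L^p(\Omega)$ norm. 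This gives $\| u_{\si,h} - u_{\infty,h} \|_{L^p(\Omega)} \to 0$ as $\si \to \infty$, as claimed.

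No step here is a genuine obstacle, as all the heavy lifting — the compactness of \Cref{assumption1}(ii), the $\Gamma$-convergence of \Cref{assumption3} (established in \Cref{thm:GammaConvergence:Example2}), and the approximation properties of \Cref{assumption4} — has already been carried out. The only point requiring a moment's care is the identification of the pivot space: one must note that $\cT_0$ is the \emph{larger} space $L^p_0(\Omega_{\delta_0})$ rather than $L^p(\Omega)$, and then observe that because the discrete minimizers and recovery sequences all live in spaces of functions supported in $\overline{\Omega}$, the distinction is immaterial for the norm of the difference. If one prefers to avoid even this, one may instead apply \Cref{theorem:gammaconvergence:sigmatoinf} directly together with the compactness in \Cref{prop:ConstantHorizon:FxnSpProp}(2), reproducing the argument of \Cref{prop:convergence}(2) in this concrete setting; the conclusion is the same.
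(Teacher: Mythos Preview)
Your proposal is correct and follows the same approach as the paper: the theorem is stated without a separate proof, as an immediate consequence of \Cref{prop:convergence}(2) once \Cref{assumption1}--\Cref{assumption4} have been verified in the preceding discussion. Your added remarks on convexity and on the identification $\cT_0 = L^p_0(\Omega_{\delta_0})$ versus $L^p(\Omega)$ are helpful clarifications that the paper leaves implicit.
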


The energies $\cE_\si$ satisfy \eqref{eq:FirstVariationEstimates:1}-\eqref{eq:FirstVariationEstimates:2} with $Q_p(s,t) = (s+t)^{\frac{p-2}{2}}$ in the case $p \geq 2$, and $Q_p(s,t) = (s+t)^{\frac{2-p}{2}}$ in the case $1 < p < 2$, similarly to the functionals in \Cref{sec:example}. 
Therefore we have the following:
\begin{theorem}[Convergence as $h \to 0$ with fixed $\si$]\label{theorem:conv1bn:constanthorizon}
	Take all the assumptions of the previous theorem. Let $u_{\si, h}$ and $u_{\si}$ be the unique minimizers satisfying \eqref{eq:minprobnum} and \eqref{eq:minprob}, respectively.
	Then, for fixed $\si$,
	$\|u_{\si, h}-u_{\si} \|_{\cT_{\si}} $ satisfies \eqref{eq:gammaconvergence2} and converges to $0$ as $h \to 0$.
\end{theorem}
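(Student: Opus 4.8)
The plan is to deduce the statement from the abstract strong-convergence result of \Cref{sec:ac} — the theorem producing the bound \eqref{eq:gammaconvergence2} under \Cref{assumption1}--\Cref{assumption4} together with \Cref{assumption5} — exactly as \Cref{theorem:conv1bn} was obtained in \Cref{sec:example}. \Cref{assumption1}, \Cref{assumption2}, \Cref{assumption3}, and \Cref{assumption4} have already been verified for the present family of spaces \eqref{eq:spacefamily:ConstantHorizon} and energies \eqref{eq:energy:constanthorizon}. Hence the only remaining task is to verify \Cref{assumption5} for the $\cE_\si$ of \eqref{eq:energy:constanthorizon}; since $p \geq 2$ throughout this section, only the second branch of \eqref{eq:FirstVariationEstimates:1}--\eqref{eq:FirstVariationEstimates:2} is needed. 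Once this is done, \eqref{eq:gammaconvergence2} follows verbatim, and its right-hand side tends to $0$ as $h \to 0$ because $\inf_{v_{\si,h}\in W_{\si,h}} \vnorm{v_{\si,h}-u_\si}_{\cT_\si} \to 0$ by the density statement \eqref{FEsq:constanthorizon}.

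To verify \Cref{assumption5}, I would first record the Euler--Lagrange operator. Writing $\delta = \min\{\delta_0,1/\si\}$ and, for $u$ defined on $\Omega_\delta$, $D_\delta u(\bx,\by) := \bigl(u(\bx)-u(\by)\bigr)/\delta$, the first variation of $\cE_\si$ is
\begin{equation*}
	\vint{\cF_\si(u),w} = \int_{\Omega_\delta}\int_{\Omega_\delta} \rho_\delta(|\bx-\by|)\,\Phi_p\bigl(D_\delta u(\bx,\by)\bigr)\,D_\delta w(\bx,\by)\,\rmd\bx\,\rmd\by - \vint{f,w},
\end{equation*}
with $\Phi_p(t) = |t|^{p-2}t$ as in \Cref{sec:example}. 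Subtracting two such expressions cancels the linear term, and taking $w = u-v$ gives the nonnegative ``energy gap''
\begin{equation*}
	\vint{\cF_\si(u)-\cF_\si(v),\,u-v} = \int_{\Omega_\delta}\int_{\Omega_\delta} \rho_\delta(|\bx-\by|)\bigl(\Phi_p(D_\delta u)-\Phi_p(D_\delta v)\bigr)\bigl(D_\delta u-D_\delta v\bigr)\,\rmd\bx\,\rmd\by \geq 0.
\end{equation*}
I would then feed into these the scalar inequalities for $\Phi_p$ quoted in \Cref{sec:example}. For \eqref{eq:FirstVariationEstimates:2}, integrating $|s-t|^p \leq C(\Phi_p(s)-\Phi_p(t))(s-t)$ against $\rho_\delta(|\bx-\by|)\,\rmd\bx\,\rmd\by$ yields $[u-v]_{\cS^p[\delta](\Omega)}^p \leq C\vint{\cF_\si(u)-\cF_\si(v),u-v}$, and the Poincar\'e inequality of \Cref{prop:ConstantHorizon:FxnSpProp}(3) upgrades the seminorm to the $\cT_\si$-norm, which is \eqref{eq:FirstVariationEstimates:2} with exponent $1/p$. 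For \eqref{eq:FirstVariationEstimates:1}, inserting $|\Phi_p(s)-\Phi_p(t)| \leq C|(\Phi_p(s)-\Phi_p(t))(s-t)|^{1/2}(|s|+|t|)^{(p-2)/2}$ into $|\vint{\cF_\si(u)-\cF_\si(v),w}|$ and applying H\"older's inequality in $\rho_\delta(|\bx-\by|)\,\rmd\bx\,\rmd\by$ with the exponents $2$, $\tfrac{2p}{p-2}$, $p$ (whose reciprocals sum to $1$) separates the integral into the energy gap to the power $1/2$, the factor $\bigl([u]_{\cS^p[\delta](\Omega)}+[v]_{\cS^p[\delta](\Omega)}\bigr)^{(p-2)/2} \leq C(\vnorm{u}_{\cT_\si}+\vnorm{v}_{\cT_\si})^{(p-2)/2}$, and $[w]_{\cS^p[\delta](\Omega)}\leq\vnorm{w}_{\cT_\si}$; the supremum over $\vnorm{w}_{\cT_\si}\leq 1$ then gives \eqref{eq:FirstVariationEstimates:1} with $Q_p(s,t)=C(s+t)^{(p-2)/2}$. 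All constants depend only on $p$, the normalization of $\rho$, and the Poincar\'e constant for the fixed value $\delta=\min\{\delta_0,1/\si\}$, so they are independent of $h$.

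With \Cref{assumption5} in hand, the abstract strong-convergence theorem applies and gives both the rate \eqref{eq:gammaconvergence2} and, upon letting $h\to 0$ with $\si$ fixed, the convergence $\vnorm{u_{\si,h}-u_\si}_{\cT_\si}\to 0$. The only genuine obstacle is the bookkeeping in the nonlocal H\"older step — making sure the double integral against the kernel measure closes with the correct exponents and that the compactly supported, $L^1$-normalized kernel $\rho_\delta$ contributes no $h$- or $\delta$-dependent blowup; it does not, since $[\,\cdot\,]_{\cS^p[\delta](\Omega)}^p$ is by definition exactly this weighted double integral. Beyond that, the argument is a verbatim transcription of the monotone-operator computation already used for \Cref{theorem:conv1bn}.
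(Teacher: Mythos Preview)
Your proposal is correct and follows essentially the same route as the paper: verify \Cref{assumption5} for the energies \eqref{eq:energy:constanthorizon} via the scalar $\Phi_p$ inequalities, then invoke the abstract strong-convergence theorem to obtain \eqref{eq:gammaconvergence2}. The paper does not spell out the H\"older computation you give; it simply asserts that \eqref{eq:FirstVariationEstimates:1}--\eqref{eq:FirstVariationEstimates:2} hold with $Q_p(s,t)=(s+t)^{(p-2)/2}$ ``similarly to the functionals in \Cref{sec:example},'' so your write-up is a more detailed version of the same argument.
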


Note that the convergence result in the previous theorem also holds by \Cref{rmk:weakimpliesstr}.

		\section{Conclusion}

        We have presented a framework for asymptotically compatible schemes that applies to a wide variety of parametrized nonlinear variational problems, whose limits as the parameter $\delta \to 0$ are interpreted via Gamma-convergence. 
        %
        %
        %
        We have also rigorously demonstrated the efficacy of the AC schemes in two examples of nonlinear nonlocal problems; the first with a horizon parameter that exhibits heterogeneous localization, and the second with a domain that varies with the nonlocal parameter.

        The examples treated in this work can be generalized along various lines while retaining all of the desired analytical properties. For instance, kernels $\rho$ can be chosen from more general classes; see for instance \cite{mengesha2015VariationalLimit,Du2023ME}. 
        
        Efforts to apply this more general framework to nonlocal models that incorporate boundary conditions via penalty terms, as well as nonlocal correspondence models \cite{Silling2007,han2023nonlocal} are ongoing. Studies of important issues for practical applications such as numerical quadrature formulae and fast linear and nonlinear solvers are surely worthy further investigation. Additional interesting questions to be further studied include conditional AC schemes \cite{tian2014sinum} and the general theory of AC schemes for gradient flows and other time-dependent problems as well as other types of nonlocal models and discretizations \cite{d2022prescription,trask2019asymptotically,leng2022petrov,glusa2023asymptotically,luo2023asymptotically,pang2022accurate,wang2022stability,yu2021asymptotically,you2020asymptotically,zhang2018accurate,ydu2023numerical}.
  

 \bibliographystyle{amsplain}
		\bibliography{References2}

\end{document}